\documentclass[10pt]{amsart}
\usepackage[margin=1.25in]{geometry}
\usepackage{amsmath,amsthm,amssymb,amsfonts}
\usepackage{subcaption}
\usepackage[countmax]{subfloat}
\usepackage[author={Kai Nakamura}]{pdfcomment} %To put comments on the PDF
\usepackage[pdfencoding=auto]{hyperref} %To display tex in section headings
\usepackage{xcolor}

\usepackage{todonotes}

\makeatletter
\providecommand\@dotsep{5}
\renewcommand{\listoftodos}[1][\@todonotes@todolistname]{%
  \@starttoc{tdo}{#1}}
\makeatother

\usepackage{float}
\usepackage{comment}%To make comment blocks

\usepackage{graphicx}
\graphicspath{{images/}}

\usepackage{setspace} %Let's us doublespace output PDF
%\doublespacing %Uncomment to enable double space

%Some handy shortcuts

\newcommand{\Z}{\mathbb{Z}}

 %Would normally use /C here, but pdfcomment usese it for something
\newcommand{\CP}{\mathbb{CP}^2}

\newcommand{\nCPbar}{\# n \overline{\mathbb{CP}}^2}
\newcommand{\CPbar}{\overline{\mathbb{CP}}^2}
%\newcommand{\TODO}[1]{\textcolor{red}{TODO: #1}} %Creates a big red TODO in pdf, type what TODO in braces after
 %Creates a big red COMMENT in pdf, type comment in braces after

\newcommand{\zsg}{\phi : S^3_0(K) \rightarrow S^3_0(K')}

\def\inter{\mathop{\rm int}}
\newcommand{\torusnbhd}{S^1 \times S^1 \times D^2}

\newcommand{\anntwifam}{\phi_k:S^3_0(J_0) \rightarrow S^3_0(J_k)}
\newcommand{\gsh}[1]{g_{sh}^0( #1 )}

\usepackage[backend=biber,style=alphabetic,citestyle=alphabetic,maxnames=10,maxalphanames=99]{biblatex}

\renewbibmacro{in:}{%
  \ifentrytype{article}
    {}
    {\bibstring{in}%
     \printunit{\intitlepunct}}}

\addbibresource{exotic_H_slice_disk_surg.bib} %Add citations in the file paper_template.bib
\usepackage{hyperref}

%Creates thm which can be any of the below.
%All use the same counter, which will reset in a new section
\newtheorem{thm}{Theorem}[section]
\newtheorem{lem}[thm]{Lemma}
\newtheorem{cor}[thm]{Corollary}
\newtheorem{mydef}[thm]{Definition}
\newtheorem{prop}[thm]{Proposition}
\newtheorem{remark}[thm]{Remark}

\newtheorem{conj}[thm]{Conjecture}
\newtheorem{prob}[thm]{Problem}

%For other theorems
 %When referencing someone elses theorem, so it doesn't add to theorem counter and gives it it's own title

\title {Torus surgery on Knot Traces}
\author{Kai Nakamura}
\address{  Stanford University\\
      Department of Mathematics\\
      Building 380, Stanford, California 94305,
   USA}
\email{kainaka@stanford.edu}
\date{\today}

\begin{document}

\begin{abstract}
We initiate the study of torus surgeries on knot traces.
Our key technical insight is realizing the annulus twisting construction of Osoinach as a torus surgery on a knot trace.
We present several applications of this idea.
We find exotic elliptic surfaces that can be realized as surgery on null-homologously embedded traces in a manner similar to that proposed by Manolescu and Piccirillo.
Then we exhibit exotic traces with novel properties and improve upon the known geography for exotic Stein fillings.
Finally, we construct new potential counterexamples to the smooth $4$-dimensional Poincar\'e Conjecture.
%In their attempt to disprove smooth 4-dimensional Poincaré conjecture, Manolescu and Piccirillo proposed using zero surgery homeomorphisms and H-sliceness to construct exotic
%4-manifolds \cite{zero_surg_exotic}.
%By annulus twisting a ribbon knot, they construct a family of homotopy 4-sphere which were then shown standard by the author \cite{trace_emb_zero}.
%Defeated by the notoriously difficult 4-sphere, we retreat to a more familiar battleground: elliptic surfaces.
%Contrasting the author’s previous work, we establish effectiveness of this method and construct exotic 4-manifolds
%by annulus twisting an H-slice knot.
%As a byproduct of our construction, we get a family of knots
%with mutually exotic zero traces that have several novel features.
%This family consists of infinitely many knots related by annulus twisting and more unexpectedly, these knots are all concordant to each other.
%Furthermore, we construct an infinite family of new potential counterexamples to the smooth $4$-dimensional Poincar\' conjecture.
%Underlying all of this is a common theme, applying torus surgeries to the study of knot traces.
\end{abstract}
\maketitle

\section{Introduction}
%
%\subsection{Motivation}
%
This paper has a simple thesis statement: torus surgeries are a powerful tool to construct and study $4$-manifolds and we can apply this technology to knot traces.
Recall that the zero trace $X_0(K)$ of a knot $K$ is a $4$-manifold formed from attaching a zero framed $2$-handle to the $4$-ball.
This approach echoes that of Yasui's cork twisting of knot traces \cite{yasui_akb_kirby_conj}.
There he shows how to construct pairs of knot traces that differ by a cork twist.
%, Piccirillo's interpretation of dualizable patterns as trace diffeomorphisms \cite{Conway_knot} the author's blowing up of knot traces, and turning a Kirby diagram upside down with a knot trace \cite{trace_emb_zero}.
This had important applications such as resolving the Akbulut-Kirby Conjecture and constructing new families of exotic traces.
%, showing the Conway knot is not slice, and ruling out several potential counterexamples to the smooth $4$-dimensional Poincar\'e conjecture (SPC4).\todo{find this clunky}

A torus surgery removes a $T^2 \times D^2$ from a $4$-manifold and glues it back in by some homeomorphism of the boundary $3$-torus.
Almost all constructions of exotic $4$-manifolds can be realized as torus surgeries \cite{round_handles}.
Therefore, we would like to apply torus surgeries to construct and study knot traces.
%However, the words ``torus surgery'' are not an incantation that magically allows us to do interesting topology.
%The medium for our spell is Osoinach's annulus twisting construction \cite{ann_twist}.
To do this, we will make use of Osoinach's annulus twisting construction \cite{ann_twist}.
This is a method for producing infinite collections of knots $\{J_k\}_{k \in \Z}$ that share a zero surgery $\anntwifam{}$.
The key technical theorem underlying our results is the following relationship between annulus twisting and torus surgery.
\begin{thm}
An annulus twist homeomorphism $\anntwifam$ can be realized $4$-dimensionally as a torus surgery of $X_0(J_0)$ resulting in $X_0(J_k)$.
\label{thm:anntwivague}
\end{thm}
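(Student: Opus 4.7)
The plan is to realize Osoinach's annulus twist as a single torus surgery performed on the torus bounding a tubular neighborhood of the twisting annulus, pushed into the interior of $X_0(J_0)$.

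Let $A \subset S^3$ be the twisting annulus, with boundary components $c_1 \cup c_2$, and let $T = \partial\nu(A)$. Osoinach's twist is $3$-dimensionally the composition of Dehn surgeries of slopes $\pm 1/k$ on $c_1$ and $c_2$; since these are two parallel essential curves on the torus $T$, a standard fact identifies such a pair of Dehn surgeries with a single torus surgery on $T$. Because $T \subset S^3$ is disjoint from $J_0$, it also sits inside the boundary $S^3_0(J_0) = \partial X_0(J_0)$, and I would push $T$ into the interior of $X_0(J_0)$ through the boundary collar, giving it a trivial tubular neighborhood $T^2 \times D^2$. The desired torus surgery is then performed by removing this neighborhood and regluing by the diffeomorphism of $T^3$ determined by the annulus twist parameters. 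Verifying that on the boundary this torus surgery realizes exactly Osoinach's homeomorphism $\phi_k$ is essentially the $3$-dimensional statement just recalled.

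The harder step is to show that the resulting $4$-manifold is diffeomorphic to $X_0(J_k)$, not merely that its boundary is diffeomorphic to $S^3_0(J_k)$. I would realize the torus surgery as a round $2$-handle attachment (equivalently, a $2$-handle together with a $3$-handle) appended to the Kirby diagram of $X_0(J_0)$, so that the surgered trace admits a Kirby diagram consisting of $J_0$ with framing $0$ together with the handles encoding the surgery on $T$. A sequence of Kirby moves, paralleling Osoinach's $3$-dimensional handle slides but now performed within the $4$-manifold and tracking the $2$-handle of the trace throughout, should transform this enlarged diagram into the standard diagram of $X_0(J_k)$: a single $0$-framed $2$-handle along $J_k$, with all auxiliary handles cancelled in matched pairs.

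The principal obstacle is this Kirby-theoretic identification. Osoinach's $3$-dimensional argument converts the surgery description of $S^3_0(J_0)$ together with $\pm 1/k$ surgeries on $c_1, c_2$ into $0$-surgery on $J_k$. The $4$-dimensional analogue must carry out the same moves while producing and cancelling only matched pairs of higher-dimensional handles, and must ensure that the boundary diffeomorphism induced by the resulting handle structure is exactly $\phi_k$. Special care is warranted because exotic knot traces exist in general, so agreement of boundaries alone is not enough to pin down the $4$-manifold; the handle-slide certificate is what upgrades the $3$-dimensional Osoinach homeomorphism into an honest diffeomorphism of traces.
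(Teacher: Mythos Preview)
Your choice of torus is the central gap. You take $T=\partial\nu(A)$, the boundary of a neighborhood of the twisting annulus in $S^3$, but Osoinach's homeomorphism $\phi_k$ is not simply ``$\pm 1/k$ surgery on $c_1,c_2$''; it is the \emph{composition} of two annulus twists, one along $A$ and one along a second annulus $A'\subset S^3_0(J_0)$ obtained by capping the punctured annulus between $J_0$ and $\partial A$ with the core of the $2$-handle. The paper's torus is $T=A\cup_{\partial} A'$, and the key point is that the two annulus twists match up across $\partial A$ to become a single torus twist on a parallel copy $T\times\{\theta\}$ inside $\partial(T\times D^2)$. Your torus $\partial\nu(A)$ never sees $A'$ or the $2$-handle, so there is no mechanism by which surgery on it could reproduce the part of $\phi_k$ coming from the $A'$-twist. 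Concretely, $\partial\nu(A)$ bounds the solid torus $\nu(A)$ in $S^3$ (hence in $\partial X_0(J_0)$ once pushed in), and torus surgery on a torus bounding a solid torus in the collar direction cannot change the isotopy class of the $2$-handle attaching circle in the required way.

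The paper makes this explicit by introducing two cancelling $1$/$2$-handle pairs---one following the band $\gamma$ (so that $A'$ becomes visible running over the new $1$-handle) and one along $A$---until the Kirby diagram manifestly displays $T\times D^2$ with two extra $2$-handles. The torus twist then wraps one of those $2$-handles around the torus, and recancelling produces the standard diagram of $X_0(J_k)$. Your outline of ``Kirby moves paralleling Osoinach's slides'' is in the right spirit, but without the correct torus $A\cup A'$ and the specific cancelling pairs that expose it, there is no concrete handle certificate, and the identification with $X_0(J_k)$ (rather than some other manifold with boundary $S^3_0(J_k)$) does not follow.
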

See Theorem \ref{thm:ann_twi_log_trans} in Section \ref{sec:twist} for a more precise statement.
We can now use what we know about torus surgeries on general $4$-manifolds and apply it to knot traces.
We present these applications as a sequence of short vignettes that we summarize below.
\subsection{Zero surgery homeomorphisms and exotic 4-manifolds}
After Perelman\cite{perelman2002entropy,perelman2003ricci}, the most notable remaining case of the generalized Poincar\'e Conjecture is the smooth $4$-dimensional Poincar\'e Conjecture (SPC4).
Long open, SPC4 asserts there are no exotic $4$-spheres: $4$-manifolds homeomorphic, but not diffeomorphic to the standard $4$-sphere.
%Seemingly intractable, one approach initiated by Freedman, Gompf, Morrison, and Walker (FGMW) is to use sliceness to show a homotopy $4$-sphere $\Sigma$ is exotic \cite{Man_and_Machine}.
One approach initiated by Manolescu and Piccirillo is to use sliceness and zero surgery homeomorphisms to construct promising homotopy $4$-spheres \cite{zero_surg_exotic}.
Recall that a knot $K$ is slice if there is a smooth, properly embedded disk $D$ in $B^4$ with boundary $K$.
%The homotopy $4$-sphere $\Sigma$ could be distinguished from standard by finding a knot $K$ slice in the homotopy ball $B = \Sigma - \inter(B^4)$, but not slice in $B^4$.
%Manolescu and Piccirillo refined the FGMW approach using zero surgery homeomorphisms.
%They proposed constructing an exotic $4$-sphere by finding a counterexample to the following:
If a knot $K$ is slice and there is a zero surgery homeomorphism $\zsg$, then one can construct a homotopy $4$-sphere.
It is not immediately clear if these homotopy $4$-spheres are standard.
Using annulus twists of the ribbon knot $8_8$, Manolescu and Piccirillo constructed an infinite family of homotopy $4$-spheres $Z_k$.
The author showed these $Z_k$ were standard.
\begin{thm}[Theorem 1.3 of \cite{trace_emb_zero}]
The Manolescu-Piccirillo homotopy $4$-spheres $Z_k$ constructed by annulus twists of the ribbon knot $8_8$ are standard.
\end{thm}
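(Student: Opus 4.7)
The plan is to use Theorem~\ref{thm:anntwivague} to present each homotopy sphere $Z_k$ as a single torus surgery on a baseline manifold $Z_0 \cong S^4$, and then to argue that the surgery is trivial. Throughout I work with the Manolescu--Piccirillo construction in the form $Z_k := W \cup_{\phi_k} X_0(J_k)$, where $W := B^4 \setminus \nu(D)$ is the exterior of a slice disk $D$ for $J_0 = 8_8$ and $\anntwifam$ is the annulus twist zero surgery homeomorphism.

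First, I would dispose of the baseline case $k = 0$: when $\phi_0$ is the identity, $Z_0 = W \cup_{\mathrm{id}} X_0(J_0)$ admits a handle decomposition in which the $0$-framed $2$-handle of $X_0(J_0)$ cancels with the $1$-handle of $W$ dual to the slice disk, so $Z_0 \cong S^4$. It therefore suffices to relate each $Z_k$ to $Z_0$.

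Second, by Theorem~\ref{thm:anntwivague}, $X_0(J_k)$ differs from $X_0(J_0)$ by a torus surgery on a torus $T \subset \inter X_0(J_0)$ realizing the annulus twist, and its boundary effect is exactly $\phi_k$. Since $T$ is disjoint from $\partial X_0(J_0) = \partial W$, the surgery commutes with the Manolescu--Piccirillo gluing and
\[
Z_k \;\cong\; (Z_0)_T,
\]
the result of the same torus surgery applied to $T$ now viewed inside $Z_0 \cong S^4$.

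The remaining, and most delicate, step is to show that this torus surgery on $S^4$ is diffeomorphically trivial. The torus $T$ is constructed from the Osoinach annulus, so it sits in a concrete region of $Z_0$ coming from the trace portion of the decomposition. I would attempt to isotope $T$ into a standard $S^1 \times B^3 \subset S^4$ so that $T$ bounds an unknotted solid torus, and then verify that the annulus twist framing corresponds to the meridian of this solid torus. Granted both, the torus surgery does not change the diffeomorphism type and $Z_k \cong S^4$ follows. The main obstacle is precisely this framing bookkeeping: an annulus twist a priori produces a $(p,q)$-surgery along $T$, and one must confirm that the combinatorics of the $8_8$ annulus actually yield the trivial slope rather than an exotic one, and that no additional twisting is introduced when $W$ is reglued.
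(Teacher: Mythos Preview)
Your reduction of $Z_k$ to a torus surgery on $Z_0 \cong S^4$ via Theorem~\ref{thm:anntwivague} is exactly the paper's strategy. The gap is in your last step. The torus surgery realizing the annulus twist is by the $k$-fold torus twist $\tau^k$, which sends the meridian $\{pt\}\times\partial D^2$ to (meridian $+\,k\alpha$) for a curve $\alpha\subset T$; for $k\neq 0$ this is \emph{not} the meridional slope, so the framing check you propose would simply fail. The surgery is a genuinely nontrivial regluing, and there is no reason to expect ``the combinatorics of the $8_8$ annulus'' to produce the trivial slope.

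The paper's endgame bypasses this obstacle entirely. After turning the picture upside down so that $X_0(J_0)\subset -Z_0=S^4$, it carries out an explicit level-set simplification of $T$ (using that the core of the trace's $2$-handle is precisely the ribbon disk $D$, so the capped-off annulus $A'$ is visible via the ribbon move) and shows that $T$ is isotopic to the \emph{unknotted} torus in $S^4$. It then invokes the result of Larson \cite{larson_tori} that every simply connected torus surgery on the unknotted torus in $S^4$ returns $S^4$. Since each $Z_k$ is already a homotopy sphere, simple connectivity is automatic and no slope bookkeeping is needed. The real work is establishing unknottedness of $T$, not controlling the framing.
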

We reexamine these homotopy $4$-spheres using Theorem \ref{thm:anntwivague} to view them as torus surgeries instead.
This allows us to give a new proof that these $Z_k$ are standard.
One of the main purposes of this article is to contrapose this result and provide evidence for the viability of this approach.
Unfortunately, constructing an exotic $4$-sphere is a notoriously difficult problem.
We instead retreat to the setting of elliptic surfaces; these are fundamental examples of $4$-manifolds and much more amenable to exotic constructions than the $4$-sphere.
Since we are no longer in the $4$-sphere, we need an appropriate notion of sliceness.
For a smooth $4$-manifold $X$, let $X^\circ = X - \inter(B^4)$ and we say a knot $K$ is $H$-slice in $X$ if it bounds a smoothly embedded nullhomologous disk in $X^\circ$.
Now using the plethora of exotica constructed by torus surgeries on elliptic surfaces and Theorem \ref{thm:anntwivague}, we can adapt the Manolescu-Piccirillo construction to elliptic surfaces.
\begin{thm}
There exists a knot $H$-slice in an elliptic surface such that annulus twisting can give rise to infinitely many exotic elliptic surfaces.
\end{thm}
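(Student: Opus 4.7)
The plan is to adapt the Manolescu--Piccirillo zero-surgery construction to the $H$-slice setting inside an elliptic surface, using Theorem \ref{thm:anntwivague} to realize the annulus twist as a torus surgery and then borrowing the well-developed theory of exotic $4$-manifolds produced by torus surgeries (log transforms) on elliptic surfaces. The rough shape of the argument is: exhibit a single knot $J_0$, an elliptic surface $E$, and an embedded disk $D \subset E^\circ$ with $\partial D = J_0$ representing $0 \in H_2(E^\circ;\Z)$, in such a way that the annulus twist torus of $J_0$ coincides up to isotopy in $E$ with a torus whose $1/k$-surgeries are classically known to yield infinitely many pairwise nondiffeomorphic elliptic surfaces.

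First I would select a knot $J_0$ admitting a nontrivial Osoinach annulus twist family $\{J_k\}_{k \in \Z}$ sharing the zero surgery $\phi_k : S^3_0(J_0) \to S^3_0(J_k)$. By Theorem \ref{thm:anntwivague}, this homeomorphism is realized by a torus surgery of $X_0(J_0)$ along a torus $T \subset X_0(J_0)$, with result $X_0(J_k)$. Next I would embed $X_0(J_0)$ into $E^\circ$ by finding a nullhomologous properly embedded disk $\Delta \subset E^\circ$ bounded by $J_0$; the $2$-handle disk of $X_0(J_0)$, together with $\Delta$, then gives an embedded $2$-sphere of self-intersection $0$ in $E$ representing the zero class, so that $J_0$ is $H$-slice and $X_0(J_0) \hookrightarrow E$. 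The annulus twist torus $T$ now lives inside $E$.

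With these ingredients in hand, the family $E_k := (E \setminus \inter\, X_0(J_0)) \cup_{\phi_k} X_0(J_k)$ is obtained from $E$ by torus surgery on $T \subset E$, and for each $k$ the embedded $2$-sphere persists and so $J_k$ is also $H$-slice in $E_k$. The Mayer--Vietoris and Freedman arguments used in the $4$-sphere case of Manolescu--Piccirillo carry over verbatim to show $E_k$ is homeomorphic to $E$, since the gluing map $\phi_k$ is a homeomorphism of the common boundary $S^3_0(J_0) = S^3_0(J_k)$ that extends across the disk complements. To detect exoticness, I would arrange $T$ to be homologous in $E$ to a cusp neighborhood torus or a regular fiber torus, so that the family $E_k$ is up to diffeomorphism a family of log transforms of $E$; standard Seiberg--Witten basic class computations then yield infinitely many pairwise nondiffeomorphic smooth structures among the $E_k$.

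The main obstacle is the middle step: simultaneously matching the geometry of the annulus twist with the geometry of a log-transform torus in a fixed elliptic surface while maintaining $H$-sliceness of $J_0$. Concretely, one must produce a knot $J_0$ whose Osoinach annuli can be isotoped so that the resulting torus is isotopic in $E$ to a cusp or fiber torus, and at the same time exhibit the nullhomologous slice disk $\Delta$ disjoint from that torus. Once a single such configuration is found, verifying the homeomorphism type of $E_k$ and applying known exotic $4$-manifold technology are both routine; constructing the configuration itself is the delicate and novel part.
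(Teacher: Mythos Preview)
Your overall architecture is right, and you correctly identify the delicate step. However, the specific resolution you propose contains an internal contradiction that would make the argument fail. The torus $T$ produced by Theorem \ref{thm:anntwivague} is a generator of $H_2(X_0(J_0))$. If $J_0$ is $H$-slice in $E$ via a nullhomologous disk $\Delta$, then the embedding $X_0(J_0)\hookrightarrow E$ sends $H_2(X_0(J_0))$ to zero in $H_2(E)$, so $T$ is forced to be \emph{nullhomologous} in $E$. It therefore cannot be ``homologous in $E$ to a cusp neighborhood torus or a regular fiber torus,'' since those represent nonzero classes. Consequently the standard log-transform-on-a-fiber Seiberg--Witten package you invoke does not apply as stated, and this is not a technicality: the tension between $H$-sliceness (which forces $[T]=0$) and your proposed mechanism for exoticness (which asks $[T]\neq 0$) is exactly the obstacle.

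The paper circumvents this by reversing the order of construction. Rather than starting with an abstract annulus twist knot and trying to embed its trace so that $T$ becomes a useful torus, it starts with a known family of exotic $4$-manifolds already produced by surgery on a \emph{nullhomologous} torus: the Fintushel--Stern knot surgeries $X_k = E(n)_{\kappa_k}$ with twist knots $\kappa_k$, which arise from $E(n)$ by $-1/k$ surgery on the torus $S^1\times C$ where $C$ is the unknotting clasp curve. One then builds a knot trace around this torus by ambiently attaching cancelling $2$-handles inside $E(n)$ (using the vanishing cycles supplied by the elliptic fibration), and after a ribbon modification of one of these handles recognizes the result as an annulus-presented trace $X_0(J_0)$ nullhomologously embedded in $E(n)$. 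The annulus twist torus is then \emph{by construction} the Fintushel--Stern torus, so the $H$-slice trace surgeries are exactly the $X_k$, already known to be pairwise non-diffeomorphic via the Alexander polynomial formula for their Seiberg--Witten invariants. In short: you proposed ``choose knot, then match torus''; the paper does ``choose torus with known exotic surgeries, then build knot around it.''
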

This provides evidence for the viability of Manolescu and Piccirillo's approach to disproving SPC4.
If we wish to construct an exotic $4$-sphere, then we should first be able to construct exotic elliptic surfaces.
Moreover, along the way we prove the following result.
\begin{thm}
The $-5_2$ knot is $H$-slice in the $K3$ surface with zero surgery homeomorphisms $\phi_k: S^3_0(-5_2) \rightarrow S^3_0(-5_2)$ that give rise to infinitely many exotic $K3$s.
Moreover, the $-5_2$ knot is the smallest crossing number non-trivial knot that is $H$-slice in $K3$ and is the only non-slice prime knot with six or fewer crossings that is $H$-slice in $K3$.
\end{thm}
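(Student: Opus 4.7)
The overall plan is to work in four stages: exhibit the annulus presentation and derive the self-homeomorphisms, produce the nullhomologous disk in $K3^\circ$, deduce infinitely many exotic $K3$s via Theorem \ref{thm:anntwivague}, and finally verify the minimality claim by enumeration.

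First, I would find a genus-one annulus presentation of $-5_2$ such that each Osoinach annulus twist $J_k$ admits a canonical zero-surgery identification back to $-5_2$ (for instance, because the family is symmetric under the annulus twist parameter, or because an independent zero-surgery equivalence $S^3_0(J_k)\cong S^3_0(-5_2)$ is available from RBG or dualizable pattern technology). Such annulus presentations for small twist knots appear in the work of Abe, Jong, Luecke, and Osoinach. Composing the Osoinach homeomorphism with the identification yields the desired family $\phi_k:S^3_0(-5_2)\to S^3_0(-5_2)$. By Theorem \ref{thm:anntwivague}, each $\phi_k$ is realized $4$-dimensionally as a torus surgery on $X_0(-5_2)$ along a specific embedded torus $T$.

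Next, to make $-5_2$ H-slice in $K3$, I would place it in the boundary of a small $4$-ball inside a Gompf nucleus $N(2)\subset K3$ and build a nullhomologous disk via band moves supported in a cusp neighborhood, converting a null-homotopic unknot into $-5_2$ while keeping the disk in a homologically trivial region. With this disk in hand, $X_0(-5_2)$ and therefore the torus $T$ embeds in $K3$, and the torus surgeries realizing $\phi_k$ push forward to $4$-manifolds $K3_k$ that are homeomorphic to $K3$ by Freedman. To verify the $K3_k$ are pairwise non-diffeomorphic, I would identify the torus surgery as a log transform on a nullhomologous torus in $K3$ and invoke Fintushel--Stern Seiberg--Witten calculations: the basic classes vary with $k$, which both shows exoticness and produces infinitely many distinct smooth structures.

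For the minimality assertion, the prime knots with at most six crossings are $3_1,4_1,5_1,5_2,6_1,6_2,6_3$; since $6_1$ is slice, it is H-slice in every smooth $4$-manifold, so it is excluded from the claim. For the remaining non-slice knots and for $+5_2$, I would apply the available H-slice obstructions in $K3$: the Ozsv\'ath--Szab\'o $\tau$ invariant, Rasmussen's $s$ invariant, signature bounds on nullhomologous surfaces in symplectic $4$-manifolds, and $d$-invariant computations on the branched double cover. A direct check via these invariants obstructs H-sliceness of $3_1,4_1,5_1,6_2,6_3$, and of $+5_2$, leaving $-5_2$ as the unique non-slice prime knot of crossing number at most six that is H-slice in $K3$.

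The step I expect to be the main obstacle is the exoticness portion of stage three. The H-slice disk construction is Kirby calculus, the self-homeomorphisms follow from known annulus twist technology, and the minimality check is a finite enumeration. The technical crux is arranging the embedded torus $T$ inside $K3$ so that it is a nullhomologous torus admitting a Seiberg--Witten-detectable family of log transforms, and then ensuring that the resulting smooth structures are pairwise distinct rather than cycling through finitely many diffeomorphism types.
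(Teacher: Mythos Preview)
Your minimality argument has a genuine gap. The invariants you list---$\tau$, $s$, and signature---all vanish for the amphichiral knots $4_1$ and $6_3$, so none of them can obstruct $H$-sliceness of these knots in $K3$. The $d$-invariants of the branched double cover obstruct sliceness in $B^4$, but you give no mechanism by which they obstruct $H$-sliceness in $K3$. The paper's argument here is much sharper: since $K3$ is spin, Robertello's theorem forces the Arf invariant of any $H$-slice knot to vanish. Among prime knots with at most six crossings, only $5_2$ and $6_1$ have trivial Arf invariant, so $3_1,4_1,5_1,6_2,6_3$ (and their mirrors) are eliminated in one stroke. The remaining mirror $+5_2$ is then ruled out by a Thurston--Bennequin bound from the Bauer--Furuta type obstruction. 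Your list of invariants simply cannot reach $4_1$ or $6_3$.

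Your approach to the exoticness claim is also inverted relative to the paper, and this makes the step you flag as the ``main obstacle'' harder than it needs to be. You propose to first build an $H$-slice disk for $-5_2$ in $K3$, then locate the annulus-twist torus inside $K3$, and finally argue that the resulting torus surgeries are detected by Seiberg--Witten. The paper does the opposite: it begins with Fintushel--Stern knot surgeries $E(2)_{\kappa_k}$ on the twist knots, which are already known to be pairwise non-diffeomorphic by their Alexander polynomials, and then shows that the nullhomologous torus realizing $\kappa_0\leadsto\kappa_k$ can be absorbed (together with two ambient $2$-handles) into an embedded zero trace of $\kappa_{-2}=5_2$. This exhibits the $E(2)_{\kappa_k}$ directly as $H$-slice trace surgeries on a specific disk for $-5_2$, with the self-homeomorphisms of $S^3_0(-5_2)$ arising as the boundary torus twists. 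Going in this direction, exoticness is free and the work is entirely in the embedding; going in your direction, you must independently control where your torus lands in $K3$ and compute SW invariants from scratch.
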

This theorem successfully bridges the classification of slice knots to exotic smooth structures on $4$-manifolds.
It shows that as you classify $H$-slice knots in a $4$-manifold, you would naturally arrive at constructing exotic $4$-manifolds via $H$-sliceness and zero surgery homeomorphisms.
\subsection{Exotic traces with novel properties}
Since $4$-dimensional $1$ and $3$-handles attach uniquely, all of the interesting topology of exotic $4$-manifolds is contained in the $2$-handles.
Therefore, knot traces provide a setting in miniature to study exotic $4$-manifolds.
Moreover, a knot's trace is intimately connected to its slice properties via the trace embedding lemma.
Akbulut constructed the first examples of exotic traces with non-zero framing \cite{ex_trace_akb} and Yasui gave the first examples of exotic zero traces \cite{yasui_akb_kirby_conj}.
These and most other examples of exotic traces were distinguished by the shake genus.
Let us recall its definition.%sounds awkward to me
\begin{mydef}
For a knot $K$, the zero shake genus $\gsh{K}$ is the minimal genus of a surface generating the second homology of $X_0(K)$.
Moreover, if $\gsh{K} = 0$, then $K$ is called zero shake slice.
\end{mydef}
One of the primary methods to smoothly distinguish a pair of homeomorphic $4$-manifolds is to show that they have different minimal genus functions.
So it would be interesting to find exotic $4$-manifolds which are not distinguished by minimal genera.
In the setting of knot traces, the problem becomes finding exotic knot traces with the same shake genus.
There is additional value in moving past the shake genus to distinguish exotic knot traces.
It is impossible to exhibit exotic knot traces with several interesting properties using the shake genus.
For example, the shake genus does not distinguish the traces of  infinitely many knots, knots related by annulus twisting, or concordant knots.
\begin{itemize}
    \item Infinitely many knots:
    An infinite family of zero surgery homeomorphisms $\phi_k:S^3_0(K_0) \rightarrow S^3_0(K_k)$ with $k \in \Z$ puts a finite bound on $\gsh{K_k}$.
    Let $F \subset S^3_0(K_0)$ be a surface representing a generator of $H_2(S^3_0(K_0))$, e.g. a capped off Seifert surface of $K_0$.
    Then $\phi_k(F)$ in $S^3_0(K_k)$ is mapped by the boundary inclusion $S^3_0(K_k) = \partial X_0(K_k)$ to a generator of $H_2(X_0(K_k))$.
    The zero shake genus of $\{K_k\}_{k \in \Z}$ is bounded by $g(F)$ and can only attain finitely many values.
    Therefore, the shake genus can only distinguish finitely many zero traces.
    \item Annulus twisting: When $\{K_k\}_{k \in \Z}$ are related by annulus twisting, the situation is even worse.
    From the definition of annulus twisting, it is easy to see a genus one surface representing a generator of $H_2(X_0(K_k))$ and therefore such $K_k$ has bounded zero shake genus $g_{sh}^0(K_k) \le 1$.
    %Therefore, the shake genus can only distinguish at most two exotic zero traces related by annulus twisting.
    In theory, a pair of knots $K$ and $K'$ related by annulus twisting could have exotic zero traces distinguished by zero shake genera $\gsh{K} = 0$ and $\gsh{K'} = 1$.
    However, this would also disprove long open and difficult problems depending on if $K$ is slice.
    \begin{itemize}
        \item If $K$ is slice,
        then $S^3_0(K) \cong S^3_0(K')$ would be a homeomorphism between a slice knot $K$ and non-slice knot $K'$.
        This would imply there is an exotic $4$-sphere as discussed in Section \ref{sec:MP_spheres}.
        \item If $K$ is not slice,
        then it would be the first example of non slice zero shake slice knot.
        It is known that there is an $r$-shake slice knot that is not slice for any non-zero $r \in \Z$ \cite{lickorish,shake_slice_and_shake_conc,twodim,shakeslice_akb}.
        The remaining $r = 0$ case is long open.
    \end{itemize}
    \item Concordant knots: %If $K$ and $K'$ are concordant, then the zero shake genus is useless for distinguishing $X_0(K)$ and $X_0(K')$.
    It is well known that concordant knots have the same slice genus, moreover, they have the same shake genus.
    If there is a concordance $C \subset S^3 \times I$ from $K'$ to $K$, one can construct a trace embedding $X_0(K') \subset X_0(K)$.
    Attach a $2$-handle to $S^3 \times I$ along $K$ and fill $S^3 \times 0$ with a ball $B$ to get $X_0(K)$.
    Use the core of the $2$-handle and the concordance $C$ to get a $2$-handle attached to $B$ along $K'$.
    The embedding $X_0(K') \subset X_0(K)$ implies that $g_0^{sh}(K') \le g_0^{sh}(K)$ and the symmetry gives equality.
    Therefore, the shake genus is useless for distinguishing exotic traces of concordant knots.
    This argument also implies that the knots' traces have the same genus function.
\end{itemize}

We overcome these difficulties and construct a family of exotic traces with several novel properties.
\begin{thm}
\label{thm:ex_trace}
There exists a family of knots with mutually exotic traces that
\begin{enumerate}
    \item Consists of infinitely many knots
    \item Related by annulus twisting
    \item Have the same slice genus
    \item Concordant to each other
    \item Have the same zero shake genus
    \item The knots' traces have the same genus function
\end{enumerate}
\end{thm}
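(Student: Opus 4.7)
The plan is to realize $\{J_k\}$ as an Osoinach annulus-twist family sitting inside one of the H-slice configurations built for the elliptic-surface theorems above, and then to verify the six properties, with exoticness being the only difficult one.

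First I would pick a starting knot $J_0$ that is H-slice in an elliptic surface $X$ (for instance the $-5_2$ knot in $K3$) via a disk $D_0$, whose Osoinach annulus-twist family $\{J_k\}_{k \in \Z}$ remains H-slice in $X$ via modified disks $D_k$, in such a way that the Manolescu--Piccirillo style cut-and-reglue along the zero surgery homeomorphisms $\anntwifam$ yields infinitely many mutually non-diffeomorphic smooth structures $\{X_k\}$ on $X$. Theorem~\ref{thm:anntwivague} presents $X_0(J_k)$ as a torus surgery on $X_0(J_0)$ and embeds it as a codimension zero submanifold of each $X_k$ as the trace of the H-slice disk.

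Next I would verify properties (1)--(6) apart from exoticness. Infinitude (1) and relatedness by annulus twisting (2) are built in. The substantive observation is that an annulus twist carries a distinguished annulus concordance from $J_0$ to $J_k$: the trace of the supporting isotopy sweeps out an annulus in $S^3 \times I$ cobounded by $J_0$ and $J_k$, giving (4). Properties (3), (5), and (6) then follow automatically from the concordance invariance facts rehearsed in the bulleted discussion immediately preceding the theorem.

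The hard step is mutual exoticness of the traces, and this is where I expect the main obstacle. The strategy is to transfer exoticness of the $X_k$ to the traces. If $X_0(J_i) \cong X_0(J_j)$ via a diffeomorphism realizing the boundary identification $\phi_j \circ \phi_i^{-1}$, then gluing with the common complementary piece $W = X^\circ \smallsetminus \nu(D_0)$ would produce $X_i \cong X_j$, contradicting the construction. The genuine difficulty is that an abstract diffeomorphism of traces is only constrained on the boundary up to the mapping class group of $S^3_0(J_k)$, and this indeterminacy must be controlled. I expect the cleanest resolution is to identify a diffeomorphism invariant of the trace itself --- for example a relative Seiberg--Witten-type invariant of $X_0(J_k)$ coming from basic classes of $X_k$ paired with the H-slice surface --- that depends only on the smooth structure of the trace and takes infinitely many distinct values across the family.
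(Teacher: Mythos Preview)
Your plan has two genuine gaps.

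\textbf{Concordance.} The claim that ``an annulus twist carries a distinguished annulus concordance from $J_0$ to $J_k$: the trace of the supporting isotopy sweeps out an annulus in $S^3 \times I$'' is incorrect. Annulus twisting is not an ambient isotopy of $S^3$; it is a homeomorphism $S^3 \to S^3_{1/k,-1/k}(\ell_0,\ell_1) \cong S^3$ built from Dehn surgery and a twist along $A$, and the knots $J_k$ are genuinely different knot types. There is no general concordance between annulus-twist relatives, and indeed the bulleted discussion before the theorem explicitly contemplates the possibility that such knots have different shake genera. In the paper the concordance is obtained by a feature specific to the chosen presentation: after blowing up (changing the framings to $-3$ and $-1-m$) one exhibits explicit ribbon moves on the band that separate it from the annulus, giving a ribbon concordance from every $J^m_k$ to a common knot $\kappa(-3,-1-m)$. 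Properties (3), (5), (6) then follow from this, not from annulus twisting per se.

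\textbf{Exoticness.} You correctly identify the mapping-class-group indeterminacy as the obstacle, but the proposed fix --- a ``relative Seiberg--Witten-type invariant of $X_0(J_k)$'' --- is not developed and it is far from clear such an invariant exists with the required properties. The paper resolves this in a completely different and much more concrete way: it modifies the construction by blowups indexed by $m$ so that $S^3_0(J^m_0)$ is described as $1/m$ surgery on a curve in a hyperbolic cusped manifold (verified by SnapPy), and then invokes an asymptotic asymmetry lemma to conclude that for $m$ large the closed manifold $S^3_0(J^m_0)$ is hyperbolic with trivial mapping class group. Any abstract diffeomorphism $X_0(J^m_i)\cong X_0(J^m_j)$ would then restrict on the boundary to something isotopic to $\phi_{j-i}$, and that possibility is ruled out because the ambient blown-up knot surgeries $X_k \# (m+1)\CPbar$ remain pairwise non-diffeomorphic. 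One must also separately check, via Boyer's criterion, that $\phi_k$ extends to a trace \emph{homeomorphism}; your plan omits this step.
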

Previously, there were no examples of exotic traces with \textit{any} of the listed properties, whereas this family has \textit{all} of them.
Exotic traces with these properties were inaccessible because the shake genus was the main method to distinguish exotic traces.
By using Theorem \ref{thm:anntwivague} to relate these traces to exotic elliptic surfaces constructed by torus surgeries, we are able to move past this difficulty and construct such traces.

Yasui constructed exotic pairs of knot traces as the Stein traces of a pair of Legendrian knots and used the adjunction inequality for Stein surfaces to distinguish them \cite{yasui_akb_kirby_conj}.
Despite being a pair of exotic Stein traces, it is not clear that they have the same contact boundary.
Here we give the first example of Legendrian knots with exotic Stein traces with their contact boundaries identified.
This gives a truly symplectic analog of the phenomena of exotic traces.

\begin{thm}
There exists an infinite family of Legendrian knots related by contact annulus twisting whose Stein traces are homeomorphic, but non-diffeomorphic Stein fillings of the same contact $3$-manifold.
\end{thm}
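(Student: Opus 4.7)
The plan is to upgrade Theorem~\ref{thm:anntwivague} to the symplectic category by identifying its realizing torus surgery with a Luttinger surgery on a Lagrangian torus inside a Stein trace. Since Luttinger surgery preserves the symplectic form and leaves the contact boundary unchanged, this automatically produces an infinite family of fillings of a single contact $3$-manifold whose underlying smooth $4$-manifolds are the pairwise non-diffeomorphic traces of Theorem~\ref{thm:ex_trace}.

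First I would fix the family $\{K_k\}_{k \in \Z}$ from Theorem~\ref{thm:ex_trace} and select a Legendrian representative $L_0$ of $K_0$ in $(S^3, \xi_{\mathrm{std}})$ with Thurston--Bennequin number $\mathrm{tb}(L_0) = 1$, so that the zero-framed $2$-handle attachment produces a Stein trace $W_0$ filling some contact structure $\xi_0$ on $\partial W_0 = S^3_0(K_0)$. If the specific knots appearing in Theorem~\ref{thm:ex_trace} only admit maximal $\mathrm{tb}$ less than one, I would instead work with the $n$-framed trace for $n = \mathrm{tb}(L_0) - 1$ and adapt the annulus twisting machinery accordingly; both the annulus twisting description and the exotic trace distinguishing technology should transport across a fixed framing shift.

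The heart of the argument is showing that the realizing torus $T \subset W_0$ from Theorem~\ref{thm:anntwivague} can be arranged to be Lagrangian. This torus is obtained by thickening Osoinach's twisting annulus into the trace, and I would make its two boundary circles Legendrian on a convex annulus containing $L_0$ and argue, via a standard local model in a Weinstein collar, that the resulting torus is Lagrangian with respect to a Stein structure on $W_0$ realizing $\xi_0$. Once $T$ is Lagrangian, Theorem~\ref{thm:anntwivague} identifies $W_k$ with the result of an iterated Luttinger surgery on $T$. Luttinger surgery is symplectic and does not alter the contact boundary, so each $W_k$ is a symplectic filling of $(\partial W_0, \xi_0)$; a Stein deformation of the resulting symplectic structure then yields a Stein filling, whose Legendrian handlebody description recovers a Legendrian knot $L_k$ in $(S^3, \xi_{\mathrm{std}})$ smoothly isotopic to $K_k$. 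The operation $L_0 \mapsto L_k$ is what we call contact annulus twisting. Since Theorem~\ref{thm:ex_trace} supplies that the underlying smooth manifolds $X_0(K_k)$ are pairwise homeomorphic and pairwise non-diffeomorphic, so are the Stein fillings $W_k$.

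The main obstacle will be establishing the Lagrangian condition on $T$ in a way compatible with the Stein structure on $W_0$: one needs a geometric model for the twisting annulus whose normal framing on the boundary components matches the contact framing, so that the thickened torus is genuinely Lagrangian rather than merely totally real. A secondary difficulty is that Luttinger surgery a priori produces only a symplectic filling, so upgrading each $W_k$ to a Stein filling requires a separate plurisubharmonicity or Liouville deformation argument on an explicit local model. Once these two technical points are settled, the theorem follows essentially formally from Theorems~\ref{thm:anntwivague} and~\ref{thm:ex_trace}.
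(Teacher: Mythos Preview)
Your approach via Lagrangian tori and Luttinger surgery is precisely the route the paper \emph{sets aside} in the opening paragraphs of Section~\ref{sec:stein}. The paper observes there that one can put a Stein structure on $X_0(J_0^m)$ (for $m$ large), identify the torus as Lagrangian, and perform Luttinger surgery to realize the family as Stein fillings of a fixed contact boundary. But the paper then explicitly notes: ``However, it does not realize our exotic traces as the Stein traces of Legendrian knots.'' Your proposal to pass from a symplectic filling produced by Luttinger surgery back to a Stein trace via some deformation, and then read off a Legendrian $L_k$ from a handlebody description, is exactly the missing step---and it is genuinely missing. There is no general mechanism that guarantees the Luttinger-surgered manifold admits a Weinstein structure with a \emph{single} $2$-handle, let alone one whose attaching Legendrian is smoothly $K_k$. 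Indeed, the paper records the compatibility of contact annulus twisting with Luttinger surgery of Stein traces as an open \emph{conjecture} in Section~\ref{sec:questions}.

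The paper avoids this entirely by invoking the contact annulus twisting of Casals--Etnyre--Kegel, which is already a theorem: a contact annulus presentation of a Legendrian $J_0$ directly produces Legendrian knots $J_k$ whose contact $(-1)$-surgeries are contactomorphic. The work then reduces to checking that the specific smooth annulus presentation of Figure~\ref{fig:blow_up_ann_twist} can be upgraded to a contact one---the annulus (a $-3$-framed unknot) becomes a stabilized Legendrian unknot, and the band is made Legendrian by adding further negative stabilizations, which is harmless since the parameter $m$ may be taken large. This sidesteps both of your acknowledged obstacles (Lagrangian condition on $T$, Stein upgrade after Luttinger surgery) by never needing them.
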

With this we immediately obtain the following corollary.
\begin{cor}
There exist infinitely many homeomorphic, but non-diffeomorphic Stein surfaces with $b_2 =1$ that are Stein fillings of the same contact $3$-manifold.
\end{cor}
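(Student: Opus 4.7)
The plan is to deduce the corollary directly from the preceding theorem; there is essentially no additional content beyond unpacking the Stein trace construction and a Betti number computation.

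First, I would recall that the Stein trace of a Legendrian knot $\mathcal{L}$ in $(S^3,\xi_{std})$ is the Weinstein handlebody obtained by attaching a single Weinstein $2$-handle to the standard Stein ball $B^4$ along $\mathcal{L}$ with framing $tb(\mathcal{L})-1$. As a smooth $4$-manifold, this is built from one $0$-handle and exactly one $2$-handle, so $H_2$ is infinite cyclic, generated by the core of the attached handle, and hence $b_2 = 1$. In particular, every member of any infinite family of Stein traces of Legendrian knots in $(S^3,\xi_{std})$ automatically has $b_2=1$.

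Second, I would invoke the previous theorem to produce an infinite family of Legendrian knots related by contact annulus twisting whose Stein traces are pairwise homeomorphic, pairwise non-diffeomorphic, and all Stein fillings of a single contact $3$-manifold $(Y,\xi)$. Combining with the Betti number computation above yields infinitely many pairwise homeomorphic, pairwise non-diffeomorphic Stein fillings of $(Y,\xi)$, each with $b_2=1$, which is precisely the claim of the corollary.

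The only potential obstacle is a bookkeeping one: confirming that the notion of ``Stein trace'' used in the preceding theorem coincides with the single Weinstein $2$-handle attachment so that the $b_2 = 1$ conclusion is unambiguous. Since contact annulus twisting operates on the knot itself while the handle-attachment data is determined by the Legendrian framing, this is a matter of verifying terminology rather than a genuine mathematical difficulty, so I expect the corollary to be essentially free once the preceding theorem is in hand.
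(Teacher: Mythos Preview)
Your proposal is correct and matches the paper's approach exactly: the paper treats this corollary as immediate from the preceding theorem (its entire proof is a \qed), and your write-up simply makes explicit the one trivial observation needed, namely that a Stein trace is built from a single $0$-handle and a single $2$-handle and hence has $b_2=1$.
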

Exotic Stein fillings are candidates for cut and paste operations to construct exotic symplectic $4$-manifolds.
% and exotic Stein fillings with smaller topology could hypothetically be used to construct small exotic symplectic $4$-manifolds.
Furthermore, exotic Stein fillings can be thought of as a boundary analog of the problem of constructing exotic symplectic $4$-manifolds.
Smaller exotic Stein fillings would then correspond analogously to small exotic symplectic $4$-manifolds.
Akhmedov-Etnyre-Mark-Smith constructed the first example of exotic Stein fillings of a contact $3$-manifold \cite{note_stein_fill}
The size was whittled down by Akbulut-Yasui who had an example with $b_2 = 2$ \cite{akb_yas_stein}.

\subsection{New potential counterexamples to SPC4}
Manolescu and Piccirillo found five knots such that if any were slice, then an exotic $S^4$ would exist \cite{zero_surg_exotic}.
One would be able to construct an exotic $S^4$ by using a zero surgery homeomorphism with a non-slice knot.
The author then showed that these knots were not slice by using blowups of the associated knot traces to stably relate them to a non-slice knot.
Moreover, the author generalized these techniques to the entire family of zero surgery homeomorphisms considered by Manolescu and Piccirillo \cite{trace_emb_zero}.
This left a gap of potential counterexamples to SPC4.
There the author gave some examples of zero surgery homeomorphisms where his techniques did not apply.
These were a halfhearted attempt, they were more to illustrate that the Manolescu-Piccirillo approach was not yet dead.

Since the author's work on the Manolescu-Piccirillo project, there have been no promising potential counterexamples to SPC4 in the same manner as the five Manolescu-Piccirillo knots.
We now remedy this by using the insights provided by adapting the Manolescu-Piccirillo construction to elliptic surfaces.
\begin{thm}
There is an infinite family of $4$-manifolds $\{C_k\}_{k \in \Z}$ such that if any embed in the $4$-sphere, then an exotic $4$-sphere exists.
\label{thm:ex4sphere}
\end{thm}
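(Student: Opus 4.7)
The plan is to combine Theorem~\ref{thm:anntwivague} with the Manolescu--Piccirillo zero surgery construction to turn an annulus twist family into an infinite sequence of trace candidates for producing an exotic $4$-sphere. Concretely, I would start from an annulus presentation on some knot $J_0$ for which the common zero-surgery $S^3_0(J_0) = S^3_0(J_k)$ of the annulus twist family $\{J_k\}_{k\in\Z}$ admits a homeomorphism $\psi\colon S^3_0(J_0) \to S^3_0(K')$ to the zero-surgery of a knot $K'$ that is provably \emph{not} smoothly slice (certified by a classical concordance invariant such as $\tau$, $s$, or $d$ of the double branched cover). I then set $C_k := X_0(J_k)$, so that $\partial C_k = S^3_0(J_k)$.

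Suppose some $C_k$ embeds smoothly in $S^4$. By the trace embedding lemma, $J_k$ bounds a smooth slice disk $D_k \subset B^4$, and $B^4 \setminus \nu(D_k)$ has boundary $S^3_0(J_k)$. Composing the annulus twist homeomorphism $\phi_k$ with $\psi$ gives a zero-surgery homeomorphism $\Psi_k := \psi \circ \phi_k^{-1} \colon S^3_0(J_k) \to S^3_0(K')$, and the gluing
\[
Z_k \;:=\; \bigl(B^4 \setminus \nu(D_k)\bigr) \;\cup_{\Psi_k}\; X_0(K')
\]
is a smooth homotopy $4$-sphere (the standard $\pi_1$ and intersection form checks of Manolescu--Piccirillo go through because $\Psi_k$ identifies the $H_1$-generating meridians appropriately, which is automatic for annulus twist homeomorphisms and can be arranged for $\psi$ by choice). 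By construction $Z_k$ contains a smoothly embedded $X_0(K')$; were $Z_k$ diffeomorphic to $S^4$, this would contradict the non-sliceness of $K'$ via the trace embedding lemma. Hence $Z_k$ is an exotic $4$-sphere.

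The role of Theorem~\ref{thm:anntwivague} is twofold. First, it guarantees that the whole family $\{C_k\}_{k\in\Z}$ arises from a single torus surgery on $C_0 = X_0(J_0)$, so that one initial choice of annulus presentation produces infinitely many genuinely distinct candidate embeddings. Second, and crucially, it explains why the stabilization and blowup techniques of \cite{trace_emb_zero} — which killed the original five Manolescu--Piccirillo candidates by stably identifying their traces with the trace of a known non-slice knot — are not forced to apply here: the ambient torus surgery structure provided by the annulus twist differs from the one arising from the $8_8$ construction, so the stable-equivalence obstruction need not materialize for the new family.

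The hard part will be the selection problem for $J_0$ and $K'$. Step one requires exhibiting an explicit annulus presentation whose underlying zero-surgery happens to coincide with $S^3_0(K')$ for a $K'$ certified non-slice; this is arranged by searching among annulus twists built from small-complexity knots where the shared zero-surgery can be computed and matched to that of a short non-slice knot. Step two is to verify that no $J_k$ is ruled out as slice by current obstructions, which reduces to a finite invariant check for representative $k$ together with the observation (used throughout the paper, cf.\ Theorem~\ref{thm:ex_trace}) that annulus twisting preserves most natural concordance-type invariants simultaneously while still changing the smooth trace. Passing both filters is the real obstacle; everything else is a direct application of the MP framework, now powered by the torus surgery perspective of Theorem~\ref{thm:anntwivague}.
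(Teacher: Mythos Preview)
Your proposal has a genuine gap: it does not actually construct the family $\{C_k\}$, it describes a \emph{search} for one, and that search is precisely the original Manolescu--Piccirillo program that the author himself obstructed in \cite{trace_emb_zero}. You take $C_k = X_0(J_k)$ to be knot traces and hope to find a non-slice $K'$ sharing their zero surgery, certified by $\tau$, $s$, or $d$. But if $K'$ is certified by a Floer or gauge-theoretic invariant, then any $J_k$ sharing its zero surgery has that invariant vanish too, so $J_k$ cannot be slice for the same reason---your hypothesis that $C_k$ embeds in $S^4$ is then vacuous. That leaves $s$, which is exactly the five-knot search of \cite{zero_surg_exotic}, and you acknowledge but do not overcome the stabilization obstruction of \cite{trace_emb_zero}; saying the torus surgery structure ``differs'' is not an argument that the obstruction fails.

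The paper's construction is entirely different and sidesteps the selection problem. The $C_k$ are \emph{not} knot traces: they are torus surgeries on the zero trace $X_0(C)$ of the Conway knot, using the torus revealed by the fishtail picture of Piccirillo's trace diffeomorphism, twisted in the direction \emph{perpendicular} to the fishtail symmetry. Each $C_k$ is simply connected with $\partial C_k = S^3_0(C)$ and the homology of a trace. If some $C_k$ embeds in $S^4$, one replaces it by $X_0(C)$ itself; Seifert--Van Kampen and a homology check give a homotopy $4$-sphere containing $X_0(C)$, which is exotic because Piccirillo proved the Conway knot is not slice. The point is that the Conway knot is the certifying non-slice knot, and since it is topologically slice, no classical invariant obstructs its exotic sliceness---so the author's earlier stabilization techniques do not apply. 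You have the roles reversed: the paper removes a \emph{non-trace} $C_k$ and inserts a fixed non-slice trace, rather than removing a slice trace and inserting a non-slice one.
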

Manolescu and Piccirillo hoped to construct an exotic $4$-sphere by removing the trace of a slice knot from $S^4$ and replace it with the trace of a non-slice knot.
The result of this hypothesized surgery would be an exotic $4$-sphere.
Instead, we propose to find a $4$-manifold $C_k$ embedded in $S^4$ and replace it with the trace of the Conway knot.
Piccirillo showed that the Conway knot is not slice, but her proof left open whether the Conway knot is exotically slice \cite{Conway_knot}.
The $4$-manifolds $C_k$ of Theorem \ref{thm:ex4sphere} are formed from simply connected torus surgeries on the zero trace of the Conway knot.
These were found by careful reexamination of Piccirillo's proof and reinterpreting it as a torus surgery.
The author feel that this new family of potential counterexamples to SPC4 are more plausible than the Manolescu-Piccirillo knots.
First, such an exotic $4$-sphere would be detected by the non-sliceness of the Conway knot.
The Conway knot is the smallest knot that is topologically slice, not smoothly slice, and unknown if slice in a homotopy $4$-sphere.
This makes it a natural candidate to be exotically slice.
Moreover, such an exotic $4$-sphere would naturally be a torus surgery which most constructions of exotic $4$-manifolds can be realized as.
By combining the concordance approach to SPC4 with torus surgeries, this should be a more plausible way to construct an exotic $4$-sphere.

\subsection*{Conventions}
All manifolds are smooth and oriented.
Any embeddings or homeomorphisms are orientation preserving.
Boundaries are oriented with outward normal first.
All homology groups have integral coefficients.
For a smooth manifold $M$ with a submanifold $N$, let $\nu(N)\subset M$ denote a tubular neighborhood of $N$ and let $E(N)$ denote the exterior $M - \nu(N)$ of $N$.

\subsection*{Acknowledgments}
The author would like to thank Ciprian Manolescu and Lisa Piccirillo for helpful correspondences.
The author would also like to thank his advisors Bob Gompf and John Luecke for their help and support.
The author greatly appreciates help from Maggie Miller and Charles Stine.
This research was supported in part by NSF grants DMS-1937215 and DMS-2402259.

\section{Annulus twists as torus surgeries}
\label{sec:twist}
The constructions of this paper will employ a delicate interplay of three flavors of twists that we define here: torus surgeries in $4$-manifolds, then torus twists on $3$-manifolds, and then the annulus twist construction of zero surgery homeomorphisms.
The main result of this section is to show that the annulus twist construction can be interpreted as a torus surgery of a knot trace.
We present these in this order to match how they appear in Section \ref{sec:constr}.

The main construction of exotica begins with a family of exotic $4$-manifolds related by surgery on a nullhomologous torus.
From these nullhomologous tori we build our embedded nullhomologous traces.
The cut and paste of this surgery operation will then induce homeomorphisms of the zero surgery boundaries of these zero traces.
The relevant surgery is a torus surgery or logarithmic transform which is an operation on a torus $T$ in a $4$-manifold $X$ with self intersection number zero $[T] \cdot [T] = 0$.
Then $T$ has normal disk bundle $T \times D^2$ which has boundary a $3$-torus $T \times S^1$.
\begin{mydef}
The torus surgery on $T$ with a homeomorphism $\phi: T \times S^1 \rightarrow \partial E(T)$ is the $4$-manifold $X_{T,\phi} = E(T) \cup_\phi T \times D^2$ obtained by cutting out $T \times D^2$ from $X$ and regluing it by $\phi$.
\end{mydef}
Note that the torus surgery is determined by where $\phi$ maps the circle $\{pt\} \times \partial D^2$.
Pick a convenient basis $[\alpha],[\beta]$ for $H_1(T)$ and let $[\phi(\{pt\} \times \partial D^2)] = p[\{pt\} \times \partial D^2] + q[\alpha] + r[\beta]$.
This triple $p,q,r$ determines the torus surgery of $T$ and we may write $X_{p,q,r}(T)$.
However, we will will prefer to explicitly define the homeomorphism used for the torus surgery that realizes an annulus twist.
The homeomorphism relevant for us will be a torus twist on a torus in $T \times S^1$.
%So we are cutting out a torus from a $4$-manifold and regluing it by a twist.\todo{awkward sentence}
Let $S$ be a torus embedded in a $3$-manifold $M$ and identify a tubular neighborhood of $S$ as $S^1 \times S^1 \times I$.
Let $\alpha$ and $\beta$ denote $S^1 \times \{\zeta\} \times 0$ and $\zeta \times \{S^1\} \times 0$ for some $\zeta \in S^1$.
\begin{mydef}
The torus twist $\tau$ on $S$ parallel to $\alpha$ is the homeomorphism obtained from $\tau(\theta, \theta',t) = (\theta + 2\pi t,\theta', t)$ by extending as the identity on the rest of $M$ and smoothing.
\end{mydef}
Our main trick will be to decompose a torus twist into a pair of \textit{annulus twists}.
Suppose $A$ is an annulus $S^1 \times I$ embedded in a $3$-manifold $M$.
Let $\ell_0 \cup \ell_1 = S^1 \times \{0\} \cup S^1 \times \{1\}$ be the boundary of $A$ with orientation and framing induced by $A$.
Note that this orients $\ell_0$ and $\ell_1$ oppositely.
\begin{lem}
There is a homeomorphism $f_k:M \cong M_{1/k,-1/k}(\ell_0,\ell_1)$ called annulus twisting $A$.
\end{lem}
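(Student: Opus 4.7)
The plan is to realize $f_k$ by performing a Dehn twist supported in a neighborhood of $A$ on the exterior $N := M \setminus \mathrm{int}(\nu(\ell_0) \cup \nu(\ell_1))$, and then to reinterpret the resulting cut-and-paste as $1/k$ and $-1/k$ surgeries on $\ell_0$ and $\ell_1$. Let $A' := A \cap N$, a properly embedded annulus in $N$ whose two boundary circles are the $A$-framed longitudes $\lambda_0 \subset \partial\nu(\ell_0)$ and $\lambda_1 \subset \partial\nu(\ell_1)$. Orienting $A$ once and for all makes $\partial A' = \lambda_0 - \lambda_1$ as an oriented $1$-cycle, so the two boundary circles of $A'$ come out with opposite orientations relative to the paper's $A$-framings; this sign observation is the mechanism that will produce the asymmetric surgery slopes.

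I would next pick a bicollar $\nu(A') \cong A' \times [-1,1]$ inside $N$, and, using a smooth monotone profile $\rho : [-1,1] \to [0,1]$ with $\rho(-1) = 0$ and $\rho(1) = 1$, define a self-diffeomorphism $\tau_k : N \to N$ supported in $\nu(A')$ by rotating $A' \cong S^1 \times [0,1]$ by $2\pi k \rho(r)$ in the $S^1$-direction as $r$ varies in $[-1,1]$. Since rotation by $2\pi k$ is the identity on $S^1$, this $\tau_k$ glues continuously to the identity on $N \setminus \nu(A')$. On $\partial N$, the intersection with $\nu(A')$ is a pair of annular neighborhoods of $\lambda_0$ and $\lambda_1$, and on each such annulus $\tau_k$ performs an honest $k$-fold Dehn twist along the corresponding component of $\partial A'$. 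By the sign observation above, this is a positive $k$-fold Dehn twist along $+\lambda_0$ on $\partial\nu(\ell_0)$, but along $-\lambda_1$ (equivalently, a $(-k)$-fold twist along $+\lambda_1$) on $\partial\nu(\ell_1)$.

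Finally, since $M = N \cup_{\mathrm{id}}(\nu(\ell_0)\sqcup \nu(\ell_1))$, I would precompose the gluing with $\tau_k$ and rewrite $M \cong N \cup_{\tau_k}(\nu(\ell_0) \sqcup \nu(\ell_1))$. Under the new gluing, the meridian $\mu_0$ of the refilled $\nu(\ell_0)$ is attached along $\tau_k(\mu_0) = \mu_0 + k\lambda_0$, which is the $A$-framed slope $1/k$, and similarly $\mu_1$ is attached along $\mu_1 - k\lambda_1$, the slope $-1/k$. Setting $f_k := \tau_k$ on $N$ and the identity on the refilled solid tori then produces the desired homeomorphism $M \cong M_{1/k,-1/k}(\ell_0,\ell_1)$. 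The main obstacle I anticipate is not computational but orientational: one must verify carefully that $\partial A'$ really does induce opposite orientations on $\lambda_0$ and $\lambda_1$, and match this against the paper's convention for the slope $p/q$ so that the induced Dehn twists translate exactly into $1/k$ and $-1/k$. Once those signs are nailed down, the construction is entirely local in $\nu(A')$ and the rest is formal.
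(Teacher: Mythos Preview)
Your proposal is correct and follows essentially the same route as the paper: both arguments restrict $A$ to the link exterior, perform a $k$-fold Dehn twist supported in a product neighborhood of that properly embedded annulus, and then read off the resulting surgery slopes on the refilled solid tori by tracking where the meridians go, with the sign asymmetry coming from the opposite boundary orientations of $\ell_0$ and $\ell_1$. The only differences are cosmetic (you use a bicollar $[-1,1]$ with a profile function $\rho$, the paper uses a linear twist on $S^1\times I\times I$), and your anticipated orientational bookkeeping is exactly the point the paper records in its final two sentences.
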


\begin{proof}
In $E(\ell_1 \cup \ell_2) = M -\nu(\ell_0 \cup \ell_1)$, $A$ restricts to a properly embedded annulus $A^*$ with a product neighborhood $A^* \times I$ that we parametrize as $S^1 \times I \times I$.
We define the following map $f_k$ on $A^* \times I$ which is just a Dehn twist on each $S^1 \times \{x\} \times I$ factor.
\[f_k: S^1 \times I \times I \rightarrow S^1 \times I \times I,\ f_k(\theta,x,t) = (\theta + k (2\pi)t,x,t)\]
On $A^* \times \partial I = A^* \times \{0,1\}$, $f_k$ is the identity and so extends to a map on $E(\ell_0 \cup \ell_1)$ which we continue to call $f_k$.
We refill the excised neighborhoods of $\ell_0$ and $\ell_1$ to get a homeomorphism to surgery on $\ell_0 \cup \ell_1$.
The meridianal $1/0$ framing on $\ell_0$ is wrapped $k$ times longitudinally by $f_k$ to the $1/k$ framing on $\ell_0$ (relative to the annular framing).
The meridianal framing for $\ell_1$ is taken to the $-1/k$ framing by $f_k$ because $\ell_1$ is oriented oppositely to $\ell_0$.
Then $f_k$ induces a homeomorphism $f_k:M \cong M_{1/k,-1/k}(\ell_0,\ell_1)$.
\end{proof}
Observe that if we have a torus $S$ in a three manifold $M$, we can split the torus $S$ into two annuli $A$ and $A'$.
This decomposes the torus twist on $S$ into annulus twists on $A$ and $A'$.
Osoinach similarly used a complementary pair of annulus twists to construct an infinite family of knots with homeomorphic Dehn surgeries \cite{ann_twist}.
These family of knots are defined from an annulus presentation.
\begin{mydef}
An annulus presentation $(A,\gamma)$ of a knot $J_0$ in $S^3$ is a diagram exhibiting $J_0$ as framed push offs $\ell_0' \cup \ell_1'$ of the boundary of an annulus $A$ embedded in $S^3$ banded together by a band $\gamma$.
\end{mydef}
The framed push offs $\ell_0' \cup \ell_1'$ are obtained from extending $A$ outwards in the $I$ direction.
They are oriented and the band $\gamma$ must respect these orientations.
In Figure \ref{fig:ann_J_0}, we have drawn a family of annulus presentations index by an integer $n \in \Z$.
The integer $n$ indicates the number of twists we put into the annulus $A$.
These are annulus presentations of knots $J_{-1}[n]$ from \cite{zero_surg_exotic}, we just call this knot $J_0$ suppressing $n$ from the notation.
\begin{prop}
Associated to an annulus presentation $(A,\gamma)$ of a knot $J_0$, there is an infinite family of knots $\{J_k\}_{k \in \Z}$ with zero surgery homeomorphisms $\anntwifam$.
\label{prop:ann_twist}
\end{prop}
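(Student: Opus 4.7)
The plan is to apply the annulus twist lemma directly inside the $3$-manifold $M = S^3_0(J_0)$, and then reinterpret the resulting manifold as a zero surgery on a new knot via Kirby calculus.

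First I would check that the annulus $A$ still embeds in $S^3_0(J_0)$. Since $J_0 = \ell_0' \cup \gamma \cup \ell_1'$ is assembled from push-offs of $\partial A$ taken normal to $A$ and joined by a band $\gamma$ disjoint from the interior of $A$, the annulus $A$ sits inside $S^3_0(J_0)$ with its boundary $\ell_0 \cup \ell_1$ still intact. Applying the annulus twist lemma to this copy of $A$ then yields a homeomorphism
\[
  \phi_k \colon S^3_0(J_0) \;\xrightarrow{\;\cong\;}\; \bigl[S^3_0(J_0)\bigr]_{1/k,\,-1/k}(\ell_0,\ell_1).
\]

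Next I would reinterpret the target. Because the zero surgery on $J_0$ and the $1/k, -1/k$ surgeries on $\ell_0, \ell_1$ are performed on mutually disjoint components of a link in $S^3$, the target equals $S^3_{1/k,\,-1/k,\,0}(\ell_0 \cup \ell_1 \cup J_0)$. Since $\ell_0$ and $\ell_1$ are unknots in $S^3$, a pair of Rolfsen twists eliminates the fractional coefficients $1/k$ and $-1/k$; these twists change $J_0$ into a new knot by pushing it $\pm k$ times through disks bounded by $\ell_0$ and $\ell_1$. I would define $J_k$ to be the image of $J_0$ under these two Rolfsen twists, so that the Kirby diagram of $S^3_0(J_k)$ appears on the nose.

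The only delicate point is verifying that the framing on $J_k$ remains $0$, so that the target of $\phi_k$ is genuinely $S^3_0(J_k)$ rather than $S^3_n(J_k)$ for some nonzero $n$. The combined framing shift introduced by the two Rolfsen twists takes the form $-k\,\mathrm{lk}(J_0,\ell_0)^2 + k\,\mathrm{lk}(J_0,\ell_1)^2$. The key observation is that the band $\gamma$ joins push-offs of $\ell_0$ and $\ell_1$ which are oppositely oriented as boundary components of $A$, so $\mathrm{lk}(J_0,\ell_0)$ and $\mathrm{lk}(J_0,\ell_1)$ have equal absolute value and the shift cancels. This cancellation is exactly what forces the asymmetric coefficient pair $(1/k,-1/k)$ in the annulus twist lemma, and pinning down the accompanying sign conventions is the main bookkeeping task of the proof.
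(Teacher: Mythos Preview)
Your argument has a real gap at the very first step. You assert that the band $\gamma$ is ``disjoint from the interior of $A$'' and therefore that the original annulus $A$ survives into $S^3_0(J_0)$. Nothing in the definition of an annulus presentation forces this, and in the paper's running examples the band visibly pierces the annulus; that intersection is in fact what drives the later torus--surgery interpretation. So in general $A$ does not sit inside $S^3 - \nu(J_0)$, and your appeal to the annulus twist lemma for $A \subset S^3_0(J_0)$ is not justified.

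The paper handles this by twisting along a \emph{different} annulus for the first step. Because $J_0$ is built by banding push-offs of $\partial A$, there is a pair of pants $A_0'$ in $S^3$ with boundary $\ell_0 \cup \ell_1 \cup J_0$, and after $0$--surgery one caps off the $J_0$ boundary with the core disk to obtain an honest annulus $A' \subset S^3_0(J_0)$ with $\partial A' = \ell_0 \cup \ell_1$. Twisting on $A'$ gives the homeomorphism $S^3_0(J_0) \cong S^3_{0,1/k,-1/k}(J_0,\ell_0,\ell_1)$. Only then does the original $A$ reappear, now inside $S^3$ where it genuinely embeds, and a second annulus twist on $A$ identifies $S^3_{1/k,-1/k}(\ell_0,\ell_1)$ with $S^3$ and carries $J_0$ to $J_k$. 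The two twists are on \emph{different} annuli sharing the same boundary, which is why the composite is nontrivial.

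Your second step also leans on unneeded hypotheses: Rolfsen--twisting $\ell_0,\ell_1$ away along disks requires them to be unknots and requires the $\pm 1/k$ coefficients to be with respect to the Seifert framing rather than the annular one. Neither is promised by the definition. Using the annulus twist on $A$ in $S^3$ (as the paper does) sidesteps both issues, and the framing on $J_k$ comes out to $0$ because $A_0'$ already certifies the $0$--framing on $J_0$ and is carried along by the twist.
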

\begin{proof}
\begin{figure}
\centering

\subfloat[]{{
    \fontsize{10pt}{12pt}\selectfont
    \def\svgwidth{2in}
    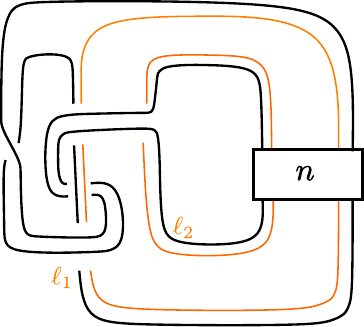
}
\label{fig:ann_J_0}
} \ \
\subfloat[]{{
    \fontsize{10pt}{12pt}\selectfont
    \def\svgwidth{2.2in}
    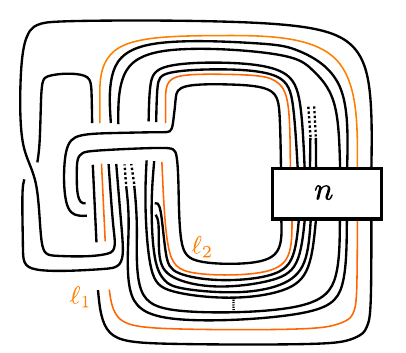
}
\label{fig:ann_J_k}
}

\caption{Annulus presentation of $J_0$ and its $k$ fold annulus twist $J_k$}
\label{fig:ann_pres}
\end{figure}

Since $J_0$ is formed by banding push offs of $\ell_0 \cup \ell_1$, the three cobound a punctured annulus $A_0'$.
Note that $A_0'$ induces the zero framing on $J_0$ since $J_0$ is identified in homology with cancelling pairs of meridians by $A_0' \cup A$.
Therefore, the punctured annulus $A_0'$ can then be capped off to an annulus $A'$ in $S^3_0(J_0)$.
Annulus twisting $A'$ in $S^3_0(J_0)$ shows there is a homeomorphism $S^3_0(J_0) \cong S^3_{0,1/k,-1/k}(J_0,\ell_0,\ell_1)$.
Now a twist on $A$ in $S^3$ gives a homeomorphism $S^3_{1/k,-1/k}(\ell_0,\ell_1) \cong S^3$ and induces a homeomorphism $S^3_{0,1/k,-1/k}(J_0,\ell_0,\ell_1) \cong S^3_0(J_k)$ for some knot $J_k$.
Composing these homeomorphisms then gives the desired $\anntwifam$.
\end{proof}
\begin{remark}
We will often abuse notation and will repeatedly use $\{J_k\}_{k \in \Z}$ to denote several families of different annulus twist knots.
When the notation $\{J_k\}_{k \in \Z}$ is used, it should be thought of as the current family of annulus twists in question.
This should not cause too much confusion as in each section, this notation is never used for two different families of knots.
\end{remark}

Now all of the constructions we have discussed come together for the main result of this section.
We reinterpret Osoinach's annulus twisting construction $4$-dimensionally as a torus surgery.
\begin{thm}
The annulus twist homeomorphism $\anntwifam$ given by an annulus presentation of $J_0$ is the same as a torus surgery of the knot traces.
%To be precise, for an annulus presentation of $J_0$ with homeomorphism $\anntwifam$, there is an associated torus $T \subset X_0(J_0)$ with torus surgery $X_0(J_0)_{T,\tau^k} = (X_0(J_0) -\nu(T)) \cup_{\tau^k} T\times D^2$ and diffeomorphism $\Phi_k: X_0(J_0)_{T,\tau^k} \cong X_0(J_k)$ which restricts to the boundary as $\phi_k$.
To be precise, for an annulus presentation $(A,\gamma)$ of $J_0$ there is an associated torus $T \subset X_0(J_0)$ with torus surgery $X_0(J_0)_{T,\tau^k} = (X_0(J_0) -\nu(T)) \cup_{\tau^k} T\times D^2$ such that the annulus twist homeomorphism $\anntwifam$ extends to a diffeomorphism $\Phi_k: X_0(J_0)_{T,\tau^k} \cong X_0(J_k)$.

\label{thm:ann_twi_log_trans}
\end{thm}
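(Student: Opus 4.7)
The plan is to construct a torus $T \subset X_0(J_0)$ from the annulus presentation, show that a natural twist homeomorphism $\tau^k$ of its boundary realizes $\phi_k$, and then identify the resulting torus surgery with $X_0(J_k)$ via Kirby calculus that lifts the three-dimensional annulus twist computation.

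First I would build $T$ explicitly. In $\partial X_0(J_0) = S^3_0(J_0)$, the annulus $A$ from the annulus presentation sits in the pre-surgery $S^3 \setminus \nu(J_0)$, while the annulus $A'$ is obtained by capping the punctured annulus $A_0'$ with the meridianal disks of the $0$-surgery. Since $A$ and $A'$ share boundary $\ell_0 \cup \ell_1$, their union $T := A \cup A'$ is a smoothly embedded torus in $\partial X_0(J_0)$. Push $T$ slightly into the interior along a boundary collar; its normal bundle in $X_0(J_0)$ is trivial because $T$ is two-sided in $S^3_0(J_0)$ and the collar provides an additional trivial direction. Fixing an identification $\partial \nu(T) \cong T \times S^1$, let $\tau^k$ be the $k$-fold torus twist of this 3-torus parallel to $[\ell_0]$, the common boundary class of $A$ and $A'$. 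The twist $\tau^k$ restricts on the two halves of $T$ to precisely the two annulus twists on $A$ and $A'$ used in the proof of Proposition~\ref{prop:ann_twist}, so that the boundary effect of the torus surgery is exactly $\phi_k: S^3_0(J_0) \to S^3_0(J_k)$.

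To exhibit the diffeomorphism $\Phi_k: X_0(J_0)_{T,\tau^k} \to X_0(J_k)$ extending $\phi_k$, I would translate the torus surgery into an explicit sequence of handle moves. Using the standard dictionary relating a torus surgery with trivial normal bundle to surgery on its constituent circles, the $\tau^k$ surgery can be realized by inserting, into the handle diagram of $X_0(J_0)$, $2$-handle chains (with accompanying $3$-handles for the rational parts) that effect the $1/k$ and $-1/k$ surgeries on $\ell_0$ and $\ell_1$. Lifting the three-dimensional computation of Proposition~\ref{prop:ann_twist} to four dimensions, a sequence of handle slides and cancellations should then reduce the resulting diagram to the single $0$-framed $2$-handle on $J_k$, i.e., to $X_0(J_k)$. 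Because both $\Phi_k$ and $\phi_k$ are built from the same annulus-twist data applied to $A$ and $A'$, the restriction to the boundary is automatic.

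The technical heart, and the main obstacle, is the explicit handle-level identification of the abstract torus surgery with the inserted $\ell_0, \ell_1$ surgeries, and the verification that Kirby moves reduce the modified diagram exactly to $X_0(J_k)$ rather than to some handle-equivalent but unrecognized manifold. One must carefully track the two annuli $A$ and $A'$, together with the cobordism contributed by the torus surgery, through the calculation, checking that the $4$-dimensional cobordism produced by $\tau^k$ matches the one produced by the pair of boundary annulus twists. I expect this bookkeeping, rather than any conceptual leap, to be where the work lies; once it is in hand, the rest of the proof is formal.
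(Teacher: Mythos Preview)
Your construction of the torus $T = A \cup A'$ pushed into the interior and your identification of the boundary effect of $\tau^k$ with the pair of annulus twists are both correct and match the paper. The gap is in the second paragraph, where you propose to realize the torus surgery by ``inserting $2$-handle chains (with accompanying $3$-handles for the rational parts) that effect the $1/k$ and $-1/k$ surgeries on $\ell_0$ and $\ell_1$.'' There is no such standard dictionary: a $4$-dimensional torus surgery is not the same as attaching handles that induce the $3$-dimensional surgeries $1/k, -1/k$ on the boundary, and attaching such handles would change the homotopy type of the trace rather than produce another zero trace. The $3$-dimensional annulus-twist calculation of Proposition~\ref{prop:ann_twist} does not ``lift'' to four dimensions in this way.

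What the paper does instead is the key missing idea: it introduces two cancelling $1$--$2$ handle pairs (one along the band $\gamma$, one along $A$) so that the Kirby diagram of $X_0(J_0)$ visibly decomposes as a standard $T \times D^2$ with exactly two $2$-handles $B_h$, $B_v$ attached to its boundary. In that picture the torus surgery is transparent: removing $T \times D^2$ and regluing by $\tau^k$ simply composes the attaching maps of $B_h$ and $B_v$ with the torus twist. One of these ($B_v$) is disjoint from the support of $\tau^k$, while the other ($B_h$) gets wrapped $k$ times over a $1$-handle at each point where the band crossed the annulus. Cancelling the $1$--$2$ pairs back down then yields exactly the diagram of $X_0(J_k)$. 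So the technical heart is not tracking a cobordism through Kirby moves, but rather first exhibiting $X_0(J_0)$ as (neighborhood of $T$) $\cup$ (two $2$-handles), after which the torus surgery becomes a one-step regluing of those handles.
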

A few things to clarify before the proof.
The diffeomorphism $\Phi_k$ extends the annulus twist homeomorphism $\anntwifam$ on the boundary.
For this extension statement to make sense, the torus surgery $X_0(J_0)_{T,\tau^k}$ must have boundary identified with $S^3_0(J_0)$.
The torus surgery is in the interior of $X_0(J_0)$ and leaves the boundary unaffected.
Therefore, the boundaries are identified $\partial X_0(J_0)_{T,\tau^k} = \partial X_0(J_0)$ and the latter is canonically identified with $S^3_0(J_0)$.
The torus $T \subset X_0(J_0)$ is formed from the same $A'$ and $A$ in the proof Proposition \ref{prop:ann_twist} pushed into the interior of $X_0(J_0)$.
The homeomorphism $\tau$ is a torus twist on $S = T \times \{\theta\}$ for some $\theta \in \partial D^2$ in the direction inherited from $A$. 
\begin{figure}
\centering
\subfloat[]{{
    \fontsize{10pt}{12pt}\selectfont
    \def\svgwidth{2in}
    \input{images/ann_twist.pdf_tex}
}
\label{fig:ann_log_0}
} \ \
\subfloat[]{{
    \fontsize{10pt}{12pt}\selectfont
    \def\svgwidth{2in}
    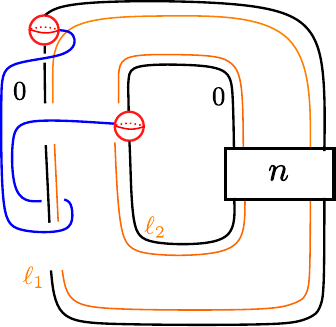
}
\label{fig:ann_log_1}
}

\subfloat[]{{
    \fontsize{10pt}{12pt}\selectfont
    \def\svgwidth{2in}
    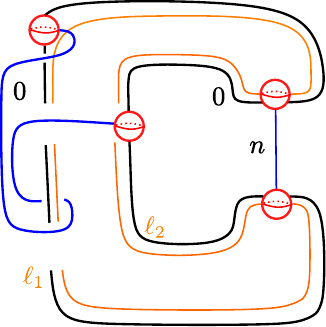
}
\label{fig:ann_log_2}
} \ \
\subfloat[]{{
    \fontsize{10pt}{12pt}\selectfont
    \def\svgwidth{2in}
    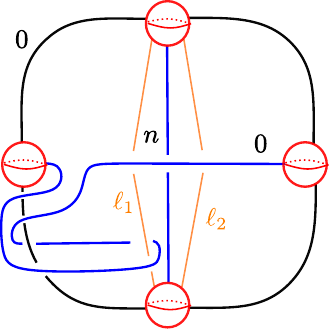
}
\label{fig:ann_log_3}
}
\caption{Annulus twisting as a torus surgery}
\label{fig:ann_log}
\end{figure}
\begin{proof}
We draw diagrams explaining this for the knot $J_0$ shown in Figure \ref{fig:ann_log_0}.
We first modify the diagram of $X_0(J_0)$ so that we clearly see a torus in $X_0(J_0)$ as well as the annuli $A$ and $A'$.
Add a cancelling one and two handle following the band $\gamma$ of $J_0$ as in Figure \ref{fig:ann_log_1}.
The $1$-handles are drawn at the base of the band $\gamma$ on $\partial A$ and the $2$-handle attached to the center of the band with framing induced by the band.
We have to flip the band over near where it meets the annulus in the top left corner since the attaching spheres of the new $1$-handle are identified by an orientation reversing homeomorphism.
The annulus $A'$ now runs over the new one handle and the old two handle.
A good way to see this is to push the balls together along the new $2$-handle until they are almost touching and about to cancel.
Now add cancelling one and two handles along $A$ to get Figure \ref{fig:ann_log_2}.
The framing of the new $2$-handle is the framing induced by the annulus $A$ and by an isotopy, we get Figure \ref{fig:ann_log_3}.
In this picture, the annulus $A$ now runs over the new $1$-handle.

We now see a trivial torus bundle $T \times D^2$ formed by the red $1$-handles and black $2$-handle in Figure \ref{fig:ann_log_3}.
There are then a pair of blue $2$-handles cancelling the red $2$-handles to get our trace $X_0(J_0)$.
Denote the blue $2$-handle running vertically by $B_v$ and the blue $2$-handle running horizontally by $B_h$.
This identifies $S^3_0(J_0) = \partial X_0(J_0)$ as $T \times S^1 = \partial T \times D^2$ with Dehn surgery on the attaching circles of $B_h$ and $B_v$.
In this description of $S^3_0(J_0)$, the homeomorphism $\phi_k$ is induced by the annulus twists on $A$ and $A'$ on $T \times S^1$.
However, the twists on $A$ and $A'$ now match up to become a torus twist $\tau^k$ on $S = T \times \{\theta\}$ for some $\theta \in \partial D^2$.
The torus $S$ runs over the $1$-handles and black $2$-handles and is seen laying flat on the page in Figure \ref{fig:ann_log_3} within the ``outer boundary'' formed by those handles.
Observe that the torus twist and the annulus twist acts on an arc piercing the torus $S$ in the same way.
%Therefore, the torus surgery on $T \times D^2$ with the torus twist $\tau^k$ on $S$ induce
So $\phi_k$ is now induced by the torus twist $\tau^k$ and carrying along the Dehn surgeries on $B_h,B_v$.
Now the torus surgery on $T$ in $X_0(J_0)$ with $\tau^k$ then has the same effect on the boundary.
We described $X_0(J_0)$ as $T \times D^2$ with $2$-handles attached to $B_h,B_v$.
The support of the torus twist is disjoint from $B_v$, but inherits intersections with $B_h$ from the intersection of the band with the annulus in Figure \ref{fig:ann_log_0}.
The torus twist wraps $B_h$ vertically around the torus at each of these intersections.
Reversing the Kirby Calculus in Figure \ref{fig:ann_log} and cancelling the $2$-handles with the red $1$-handles then matches $X_0(J_k)$ as shown in Figure \ref{fig:ann_J_k}.
\end{proof}
Now suppose we have a zero trace $X_0(J_0)$ embedded in a $4$-manifold $X$ and we replace it with the trace $X_0(J_k)$ of its annulus twist $J_k$ using $\anntwifam$.
The above shows that this is equivalent to a torus surgery of $X$ on a torus $T$ contained within $X_0(J_0)$.
This will allow us to apply the highly developed theory of torus surgery to annulus twisting with knot traces.
\section{Manolescu-Piccirillo Homotopy Spheres}
\label{sec:MP_spheres}
Manolescu and Piccirillo constructed an infinite family of homotopy $4$-spheres $Z_k$ from annulus twist homeomorphisms on a ribbon knot.
These were shown to be standard by the author \cite{trace_emb_zero}.
It was noted that there were similarities to this to work of Akbulut and Gompf on the Cappell-Shaneson spheres \cite{CS_standard,More_Cappell}.
The connection to Akbulut's work was fairly clear, but the connection to Gompf's standardization of the Cappell-Shaneson spheres was more opaque.
Gompf related the various Cappell-Shaneson spheres via torus surgeries.
These did not affect the topology of these homotopy spheres due to the presence of fishtail symmetries.
This prompted the question if the Manolescu-Piccirillo homotopy $4$-spheres could be standardized in a similar manner.
We now answer this question in the affirmative using the results of the previous section.

We first review the construction of the Manolescu-Piccirillo homotopy $4$-spheres $Z_k$.
In the following section we consider the generalization of this construction to elliptic surfaces.
So we will take this as an opportunity to explain this construction in the general case.
Let $X$ be a smooth, closed, oriented, simply connected $4$-manifold for which we want to construct an exotic copy.
We first need an appropriate notion of sliceness in this setting.
Let $X^\circ = X - \inter(B^4)$.
\begin{mydef}
A smoothly, properly embedded disk $D\subset X^\circ$ is an $H$-slice disk in $X$ if $[D] = 0 \in H_2(X^\circ, \partial X^\circ)$.
A knot $K$ is said to be $H$-slice in $X$ if $K$ is the boundary $K = \partial D$ of some $H$-slice disk $D$ in $X$.
\end{mydef}
This generalizes classical sliceness: a knot is slice in $B^4$ if and only if it is $H$-slice in $S^4$.
If $K$ bounds an $H$-slice disk $D$ in $X$, let $\nu(D)$ be a tubular neighborhood of $D$ and let the exterior of $D$ be $E(D) = X^\circ - \nu(D)$.
This exterior $E(D)$ has boundary naturally identified with $S^3_0(K)$.
We can fill the disk back in by attaching a $2$-handle and capping off with a $4$-handle to recover $X$.
These additional handles are a nullhomologously embedded trace $-X_0(K)$ decomposing $X$ as $E(D) \cup -X_0(K)$.
$H$-slice trace surgery is then the process of removing $-X_0(K)$ and replacing it with a knot trace $-X_0(K')$ using some zero surgery homeomorphism.
\begin{mydef}
Let $D$ be an $H$-slice disk in $X$ with $\partial D = K$ and let $\zsg$ be a zero surgery homeomorphism.
The $4$-manifold $X_{D,\phi} = E(D) \cup_\phi -X_0(K')$ is called the $H$-slice trace surgery of $X$ with $D$ and $\phi$.
\end{mydef}
%
%One might wonder why we termed this $H$-slice trace surgery.
%It seems like we are cutting out the knot trace $-X_0(K)$ and gluing in $-X_0(K')$ so something like knot trace surgery might seem more appropriate.
%Think of $D$ as the cocore of a $2$-handle attached to $E(D)$ along a meridian of $K$ resulting in $X^\circ$.
%Now cut out this $2$-handle and reglue it by $\phi$.
%The cocore of this new $2$-handle attachment will be an $H$-slice disk for $K'$.
%Capping off with a $4$-handle which will then be $X_{D,\phi}$.
%So we are really cutting out an $H$-slice disk $D$ for $K$ and gluing it back in so that it bounds $K'$.
%The $H$-slice trace surgery $X_{D,\phi}$ has $K'$ $H$-slice due to the nullhomologously embedded trace $-X_0(K')$ embedded in it.
The $H$-slice trace surgery $X_{D,\phi}$ is also simply connected since $X$ is (Lemma $3.3$ of \cite{zero_surg_exotic}) and therefore is homeomorphic to $X$ by Freedman \cite{Freedman}.

In \cite{rel_gen_indef_manifold}, it was shown that $H$-sliceness can be used to distinguish exotic pairs of $4$-manifolds.
For example, $T_{2,3}$ is not $H$-slice in $K3 \# \CPbar$, but is $H$-slice in $3 \CP \# 20 \CPbar$.
Such a knot is called exotically $H$-slice with respect to these two $4$-manifolds.
This occurs when an $H$-slice trace surgery $X_{D,\phi}$ has $K'$ not $H$-slice in $X$.
Then $K'$ is exotically $H$-slice in $X_{D,\phi}$ with respect to $X$ and $X_{D,\phi}$ is an exotic $X$.
In this case we say that $X_{D,\phi}$ is a \textit{strongly exotic H-slice trace surgery}.
However, if $K'$ is $H$-slice in $X$, it does not necessarily mean that $X_{D,\phi}$ is diffeomorphic to $X$.
When $K'$ is $H$-slice in $X$, but $X_{D,\phi}$ is differentiated from $X$ by other means, we say that $X_{D,\phi}$ is a \textit{weakly exotic H-slice trace surgery}.
This is a more interesting distinction than it may appear at first as we will see in Section \ref{sec:questions}

For $H$-slice trace surgeries on $X = S^4$, we simply call this a slice trace surgery.
Manolescu and Piccirillo hoped to find a strongly exotic slice trace surgery as a counterexample to SPC4.
They constructed an infinite family of homotopy $4$-spheres $Z_k$ via slice trace surgery as candidates for a strongly exotic slice trace surgery.
These $Z_k = S^4_{D,\phi_k}$ were formed by annulus twisting $\anntwifam$ a ribbon disk $D$ with boundary $\partial D = J_0$.
%
\begin{comment}
\centering

    \fontsize{10pt}{12pt}\selectfont
    \def\svgwidth{2.5in}
    
\caption{An annulus presentation and ribbon diagram of $J_0$}
\label{fig:annulus_ribbon}
%
\end{comment}
%
The ribbon disk $D$ and annulus twist homeomorphisms $\phi_k$ are specified by Figure \ref{fig:ann_torus_1}.
Note that this is the same annulus presentation as Figure \ref{fig:ann_log} with $n = 2$.

The resulting slice trace surgeries $Z_k = S^4_{D,\phi_k}$ were exciting candidates for counterexamples to SPC4.
If any of the knots $J_k$ were not slice, then $Z_k$ would be a strongly exotic slice trace surgery and so an exotic $4$-sphere.
Unfortunately, the author showed that these $Z_k$ were standard and that each $J_k$ was slice.
This was accomplished by drawing Kirby diagrams that depicted the trace embeddings $X_0(J_k) \subset -Z_k$ \cite{trace_emb_zero}.
We now use the techniques of the previous section to give a mostly Kirby diagram free proof that the homotopy $4$-spheres $Z_k$ are standard.
\begin{figure}
\centering
\subfloat[]{{
    \fontsize{10pt}{12pt}\selectfont
    \def\svgwidth{2.5in}
    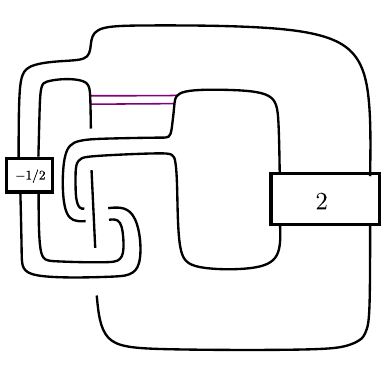
}
\label{fig:ann_torus_1}
} \ \
\subfloat[]{{
    \fontsize{10pt}{12pt}\selectfont
    \def\svgwidth{2.5in}
    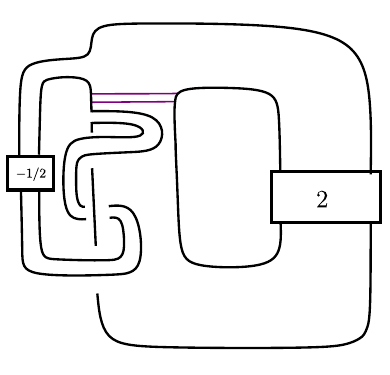
}
\label{fig:ann_torus_2}
} \ \
\subfloat[]{{
    \fontsize{10pt}{12pt}\selectfont
    \def\svgwidth{2.5in}
    %% Creator: Inkscape 1.3.2 (091e20ef0f, 2023-11-25), www.inkscape.org
%% PDF/EPS/PS + LaTeX output extension by Johan Engelen, 2010
%% Accompanies image file 'annulus_torus_3.pdf' (pdf, eps, ps)
%%
%% To include the image in your LaTeX document, write
%%   \input{<filename>.pdf_tex}
%%  instead of
%%   \includegraphics{<filename>.pdf}
%% To scale the image, write
%%   \def\svgwidth{<desired width>}
%%   \input{<filename>.pdf_tex}
%%  instead of
%%   \includegraphics[width=<desired width>]{<filename>.pdf}
%%
%% Images with a different path to the parent latex file can
%% be accessed with the `import' package (which may need to be
%% installed) using
%%   \usepackage{import}
%% in the preamble, and then including the image with
%%   \import{<path to file>}{<filename>.pdf_tex}
%% Alternatively, one can specify
%%   \graphicspath{{<path to file>/}}
%% 
%% For more information, please see info/svg-inkscape on CTAN:
%%   http://tug.ctan.org/tex-archive/info/svg-inkscape
%%
\begingroup%
  \makeatletter%
  \providecommand\color[2][]{%
    \errmessage{(Inkscape) Color is used for the text in Inkscape, but the package 'color.sty' is not loaded}%
    \renewcommand\color[2][]{}%
  }%
  \providecommand\transparent[1]{%
    \errmessage{(Inkscape) Transparency is used (non-zero) for the text in Inkscape, but the package 'transparent.sty' is not loaded}%
    \renewcommand\transparent[1]{}%
  }%
  \providecommand\rotatebox[2]{#2}%
  \newcommand*\fsize{\dimexpr\f@size pt\relax}%
  \newcommand*\lineheight[1]{\fontsize{\fsize}{#1\fsize}\selectfont}%
  \ifx\svgwidth\undefined%
    \setlength{\unitlength}{180bp}%
    \ifx\svgscale\undefined%
      \relax%
    \else%
      \setlength{\unitlength}{\unitlength * \real{\svgscale}}%
    \fi%
  \else%
    \setlength{\unitlength}{\svgwidth}%
  \fi%
  \global\let\svgwidth\undefined%
  \global\let\svgscale\undefined%
  \makeatother%
  \begin{picture}(1,1)%
    \lineheight{1}%
    \setlength\tabcolsep{0pt}%
    \put(0,0){\includegraphics[width=\unitlength,page=1]{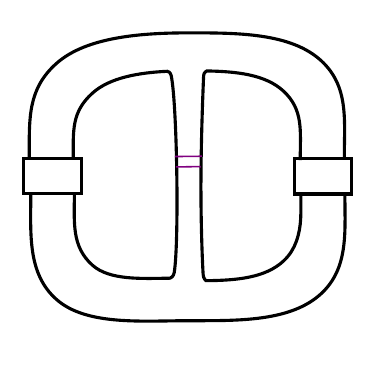}}%
    \put(0.10626833,0.51037272){\color[rgb]{0,0,0}\makebox(0,0)[lt]{\lineheight{1.25}\smash{\begin{tabular}[t]{l}-2\end{tabular}}}}%
    \put(0.84254984,0.51102865){\color[rgb]{0,0,0}\makebox(0,0)[lt]{\lineheight{1.25}\smash{\begin{tabular}[t]{l}2\end{tabular}}}}%
  \end{picture}%
\endgroup%

}
\label{fig:ann_torus_3}
} \ \
\subfloat[]{{
    \fontsize{10pt}{12pt}\selectfont
    \def\svgwidth{2.5in}
    %% Creator: Inkscape 1.3.2 (091e20ef0f, 2023-11-25), www.inkscape.org
%% PDF/EPS/PS + LaTeX output extension by Johan Engelen, 2010
%% Accompanies image file 'annulus_torus_4.pdf' (pdf, eps, ps)
%%
%% To include the image in your LaTeX document, write
%%   \input{<filename>.pdf_tex}
%%  instead of
%%   \includegraphics{<filename>.pdf}
%% To scale the image, write
%%   \def\svgwidth{<desired width>}
%%   \input{<filename>.pdf_tex}
%%  instead of
%%   \includegraphics[width=<desired width>]{<filename>.pdf}
%%
%% Images with a different path to the parent latex file can
%% be accessed with the `import' package (which may need to be
%% installed) using
%%   \usepackage{import}
%% in the preamble, and then including the image with
%%   \import{<path to file>}{<filename>.pdf_tex}
%% Alternatively, one can specify
%%   \graphicspath{{<path to file>/}}
%% 
%% For more information, please see info/svg-inkscape on CTAN:
%%   http://tug.ctan.org/tex-archive/info/svg-inkscape
%%
\begingroup%
  \makeatletter%
  \providecommand\color[2][]{%
    \errmessage{(Inkscape) Color is used for the text in Inkscape, but the package 'color.sty' is not loaded}%
    \renewcommand\color[2][]{}%
  }%
  \providecommand\transparent[1]{%
    \errmessage{(Inkscape) Transparency is used (non-zero) for the text in Inkscape, but the package 'transparent.sty' is not loaded}%
    \renewcommand\transparent[1]{}%
  }%
  \providecommand\rotatebox[2]{#2}%
  \newcommand*\fsize{\dimexpr\f@size pt\relax}%
  \newcommand*\lineheight[1]{\fontsize{\fsize}{#1\fsize}\selectfont}%
  \ifx\svgwidth\undefined%
    \setlength{\unitlength}{180bp}%
    \ifx\svgscale\undefined%
      \relax%
    \else%
      \setlength{\unitlength}{\unitlength * \real{\svgscale}}%
    \fi%
  \else%
    \setlength{\unitlength}{\svgwidth}%
  \fi%
  \global\let\svgwidth\undefined%
  \global\let\svgscale\undefined%
  \makeatother%
  \begin{picture}(1,1)%
    \lineheight{1}%
    \setlength\tabcolsep{0pt}%
    \put(0,0){\includegraphics[width=\unitlength,page=1]{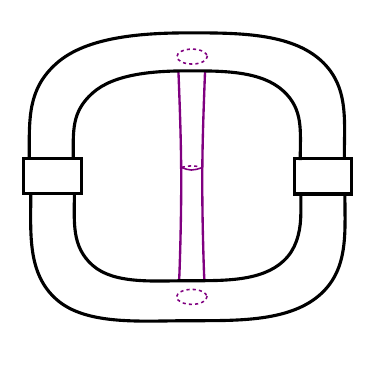}}%
    \put(0.10626833,0.51037272){\color[rgb]{0,0,0}\makebox(0,0)[lt]{\lineheight{1.25}\smash{\begin{tabular}[t]{l}-2\end{tabular}}}}%
    \put(0.84254984,0.51102865){\color[rgb]{0,0,0}\makebox(0,0)[lt]{\lineheight{1.25}\smash{\begin{tabular}[t]{l}2\end{tabular}}}}%
  \end{picture}%
\endgroup%

}
\label{fig:ann_torus_4}
}
\caption{Realizing slice trace surgery as an unknotted torus surgery}
\label{fig:ann_torus}
\end{figure}

\begin{thm}
The Manolescu-Piccirillo homotopy $4$-spheres $Z_k$ are standard.
\end{thm}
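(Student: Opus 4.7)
The approach is to combine Theorem \ref{thm:ann_twi_log_trans} with a fishtail-style trivialization argument reminiscent of Gompf's standardization of the Cappell--Shaneson spheres. The goal is to realize each $Z_k$ as a torus surgery on $Z_0 = S^4$ along a single torus $T$, and then exhibit $T$ as sitting inside a neighborhood in which such surgeries are trivial.

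First I would establish that $Z_0 \cong S^4$. When $k=0$ the annulus twist $\phi_0$ is the identity, so
\[
Z_0 \;=\; E(D) \cup_{\mathrm{id}} -X_0(J_0)
\]
is simply the usual reconstruction of $S^4$ from the ribbon disk complement $E(D)$ together with the $0$-trace $X_0(J_0)$. Next, Theorem \ref{thm:ann_twi_log_trans} provides a torus $T \subset X_0(J_0)$ and a diffeomorphism $\Phi_k : X_0(J_0)_{T,\tau^k} \cong X_0(J_k)$ extending $\phi_k$ on the boundary. Gluing $\Phi_k$ to the identity on $E(D)$ gives
\[
Z_k \;=\; E(D) \cup_{\phi_k} -X_0(J_k) \;\cong\; \bigl(E(D) \cup_{\mathrm{id}} -X_0(J_0)\bigr)_{T,\tau^k} \;=\; (S^4)_{T,\tau^k},
\]
so it suffices to show that the torus surgery $(S^4)_{T,\tau^k}$ is diffeomorphic to $S^4$ for every $k$.

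The main step is to identify a fishtail (or cusp) neighborhood around $T$ in $S^4$. Using the Kirby picture of Figure \ref{fig:ann_torus}, I would add cancelling $1$/$2$-handle pairs along the two ribbon bands of $D$; the $2$-handles so produced wrap around $T$ with the $\pm 2$ framings displayed in Figures \ref{fig:ann_torus_3} and \ref{fig:ann_torus_4}. After isotopy, handle slides, and appropriate blow-ups/downs, these $\pm 2$-framed $2$-handles should be recognizable as vanishing disks for the two generators of $H_1(T)$, exhibiting $T$ as a regular fiber of a fishtail/cusp neighborhood embedded in $S^4$. Torus surgery on a regular fiber of such a neighborhood is well known to preserve the ambient $4$-manifold (this is the mechanism Gompf used for the Cappell--Shaneson spheres), and so $(S^4)_{T,\tau^k} \cong S^4$ and $Z_k$ is standard.

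The hard part is verifying the fishtail/cusp structure. One must carefully track, through the sequence of moves in Figure \ref{fig:ann_torus}, how the ribbon-band $2$-handles intersect and frame $T$, and check that after the requisite blow-up/blow-down the resulting disks are genuine vanishing disks whose classes span (or at least contain) the direction of the torus twist $\tau$. Once that geometric input is in place, the standardness of each $Z_k$ follows uniformly in $k$ from the triviality of torus surgeries on a regular cusp fiber, giving a proof that bypasses the explicit Kirby-diagrammatic standardization in \cite{trace_emb_zero}.
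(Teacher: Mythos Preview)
Your reduction of $Z_k$ to a torus surgery $(S^4)_{T,\tau^k}$ via Theorem \ref{thm:ann_twi_log_trans} is correct and matches the paper exactly. The divergence is in the final step, and there your proposal has a real gap.

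The paper does \emph{not} look for a fishtail or cusp neighborhood around $T$. Instead it draws $T$ directly as a level-set picture in $S^4$ (Figure \ref{fig:ann_torus}), performs a short sequence of band slides and isotopies on that surface, and recognizes $T$ as the \emph{unknotted} torus in $S^4$. It then invokes Larson \cite{larson_tori}: any simply connected torus surgery on the unknotted torus in $S^4$ returns $S^4$. No vanishing disks are ever produced.

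Your plan misreads Figure \ref{fig:ann_torus}. The $\pm 2$ boxes in Figures \ref{fig:ann_torus_3} and \ref{fig:ann_torus_4} are \emph{twist boxes in the bands of the torus $T$ itself}, not framings of ambient $2$-handles attached along curves on $T$; the paper explicitly untwists these to $\pm 1$ and then to $0$ as part of showing $T$ is unknotted. So there are no ready-made candidate vanishing disks there. More seriously, even if you manufactured $2$-handles on essential curves of $T$, a fishtail needs framing $-1$ relative to the product framing; your suggestion to repair a wrong framing by ``appropriate blow-ups/downs'' would take you out of $S^4$ and thus would not prove $Z_k \cong S^4$ without a further argument that the blow-up can be undone after the surgery. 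Finally, note that the annulus here is the $n=2$ case of Figure \ref{fig:ann_log}, whereas the fishtail mechanism of Proposition \ref{prop:fishtail} applies only when $n=-1$; this is exactly why the paper takes the unknotted-torus route rather than the fishtail route for the $Z_k$.
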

\begin{proof}
Theorem \ref{thm:ann_twi_log_trans} tells us that we can view $Z_k$ as a torus surgery on the $4$-sphere.
We will find it easier however to turn everything upside down first and consider $-Z_k$.
The torus surgery to $-Z_k$ is contained within the trace $X_0(J_0) \subset -Z_0 = S^4$.
The torus $T$ in the trace $X_0(J_0)$ is formed by two annuli $A$ and $A'$ pushed into the interior of $X_0(J_0)$.
Fortunately, Figure \ref{fig:ann_torus_1} also describes this torus $T$ as a level set picture.

The first annulus $A$ is seen immediately in Figure \ref{fig:ann_torus_1}.
Recall that the second annulus is formed from the band and the surgery disk.
However, the surgery disk is the core of the $2$-handle of $X_0(J_0)$ which in the embedding $X_0(J_0) \subset S^4$ is given by the disk $D$.
We use the ribbon move in Figure \ref{fig:ann_torus_1} and cap off with two disks to draw the torus $T$ as in Figure \ref{fig:ann_torus_1}.
That figure and the following two figures omits the two capping off disks which we will need to remember to carry along throughout these manipulations.

We can now simplify, slide the $-1/2$ twisted band over the purple band to get Figure \ref{fig:ann_torus_2}.
Now we can unwrap the $-1/2$ twisted band from it's linking with the rest of the diagram so it is a $-2$ twisted band symmetric with the other band to get Figure \ref{fig:ann_torus_3}.
The purple band in Figure \ref{fig:ann_torus_3} can now be expanded into a purple tube connecting the annulus together as in Figure \ref{fig:ann_torus_4}.
Now untwist the annulus in Figure \ref{fig:ann_torus_4} to reduce the twist boxes to $\pm 1$ and wrapping the purple tube around the annulus once.
We can unwrap the purple tube from the annulus by passing down to a lower level disjoint from the annulus, bring it back to its original position in that level, and then bring it back up.
This recovers Figure \ref{fig:ann_torus_4} with $\pm 1$ twists in place of the $\pm 2$ twists, repeat this process again to fully remove the twists.
Capping off with two disks, we clearly see that this is the unknotted torus and so any simply connected torus surgery on it is standard \cite{larson_tori}.
\end{proof}
%
%By relating annulus twisting a knot to torus surgery, we have reduced standardizing these $Z_k$ to showing a particular torus in $S^4$ is unknotted.
%One important thing to observe is that if 
%This gives an alternative proof that the homotopy $4$-spheres $Z_k$ are standard.
%In the following section, we will contrast this with elliptic surfaces and show that there are infinitely many exotic $H$-slice trace surgeries.
\section{Exotically annulus twisting}
\label{sec:constr}
One of the most effective constructions of exotic $4$-manifolds is Fintushel-Stern knot surgery \cite{Fint_Ster_knot_surg}.
For an elliptic surface $X = E(n)$ and a knot $K$, this construction produces a $4$-manifold $X_K$ homeomorphic to $X$ which is often not diffeomorphic to $X$.
Each $E(n)$ admits a singular fibration over the sphere with generic fiber a torus and $12n$ critical points.
This description allows us to build $E(n)$ up starting with the fiber $\torusnbhd$ over a generic disk.
Passing over a critical point contributes a $2$-handle attached to $\torusnbhd$ along a copy of alternating $S^1$ factors with framing $-1$ relative to the product framing.
Observe that the unknot $U$ has exterior $E(U) = S^1 \times D^2$ and so $\torusnbhd$ can then be thought of as $S^1 \times E(U)$.
This identifies a longitude of $U$ with $\partial D^2$ and a meridian of $U$ with the middle $S^1$ factor of $\torusnbhd$.
Then $X_K$ is obtained by removing $\torusnbhd = S^1 \times E(U)$ from $X$ and replacing it with $S^1 \times E(K)$ matching longitudes and meridians in the same way.

The real power of this construction is realized by Fintushel and Sterns' calculation of the Seiberg-Witten invariants of $X_K$ in terms of the Alexander polynomial $\Delta_K(t)$ of $K$.
They express the knot surgery as a sequence of torus surgeries on nullhomologous tori.
The effect of these torus surgeries on the Seiberg-Witten invariants satisfy the same sort of skein relation that the Alexander polynomial satisfies.
In particular, if $\Delta_{K}(t)$ and $\Delta_{K'}(t)$ are different, then $X_{K}$ and $X_{K'}$ are not diffeomorphic \cite{knot_surg_note}.
For $k \in \Z$, let $X_k$ be Fintushel-Stern knot surgery on $X$ with the $k$-twist knot $\kappa_k$ shown in Figure \ref{fig:twist_knot_1}.
Note that $\kappa_0$ is the unknot and therefore $X_0 = X$.
The twist knots $\kappa_k$ have distinct Alexander polynomials and therefore $X_k$ are an infinite exotic family of $4$-manifolds.
Gompf constructed an infinite order cork by starting with a nullhomologous torus $T$ in $X_0$ such that a torus surgery on $T$ gives $X_k$.
The infinite order cork is obtained by absorbing $T \times D^2$ into a contractible submanifold in $X_0$ so that the the torus twist survives into the boundary \cite{inf_ord_cor}.
Our construction of exotic $H$-slice trace surgeries is directly inspired by Gompf's infinite order cork.
We start with the same $X_k$ that are already established as exotic.
Then we use the same torus surgery along with Theorem \ref{thm:ann_twi_log_trans} to realize these $X_k$ as $H$-slice trace surgeries.
\begin{figure}
\centering

\subfloat[]{{
    \fontsize{10pt}{12pt}\selectfont
    \def\svgwidth{1.5in}
    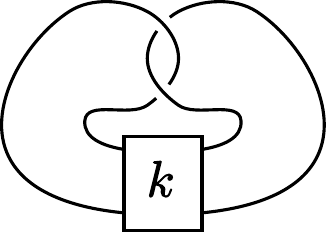
}
\label{fig:twist_knot_1}
} \ \
\subfloat[]{{
    \fontsize{10pt}{12pt}\selectfont
    \def\svgwidth{1.5in}
    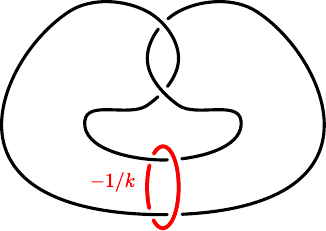
}
\label{fig:twist_knot_2}
}
\caption{(a)Twist knot $\kappa_k$ (b) Surgery description of $\kappa_k$ as $\kappa_0$ and $-1/k$ surgery on the red knot $C$ }
\label{fig:twist_knot}
\end{figure}

\begin{thm}
There is an $H$-slice disk $D$ in $X = E(n)$ with boundary $K_0$ and annulus twist homeomorphisms $\phi_k:S^3_0(K_0) \rightarrow S^3_0(K_k)$ such that each $X_k$ can be realized as $H$-slice trace surgery $X_{D,\phi_k}$ on $X$ with $D$ and $\phi_k$.
\label{ex_Hslice_full}
\end{thm}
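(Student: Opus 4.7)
The plan is to reverse-engineer $K_0$ from the nullhomologous torus that already governs Fintushel--Stern knot surgery by twist knots. Recall that $X_k$, the Fintushel--Stern knot surgery of $X=E(n)$ along $\kappa_k$, is obtained from $X_0=X$ by a torus surgery on the nullhomologous torus $T=S^1\times C \subset \torusnbhd \subset E(n)$, where $C$ is the red unknotting circle of Figure \ref{fig:twist_knot_2} and the surgery coefficient on $C$ is $-1/k$; equivalently, this is the nullhomologous torus underlying Gompf's infinite-order cork for $E(n)$. The first step is to identify an annulus presentation $(A,\gamma)$ of a knot $K_0$ whose associated torus $T_{K_0}$ in $X_0(K_0)$, constructed as in the proof of Theorem \ref{thm:ann_twi_log_trans}, is isotopic to $T$, and whose torus twist $\tau$ matches the $-1/k$ Dehn surgery direction along $C$.

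Next, I would embed the trace $X_0(K_0)$ into $E(n)$ so that $T_{K_0}$ is carried to the Fintushel--Stern torus $T$. Since $T$ lives inside a fiber neighborhood, the natural choice is to take $A$ to be an annular neighborhood of $C$ inside a standard $S^3$ cross-section of the cusp neighborhood, and to pick the band $\gamma$ so that the framed push-offs $\ell_0'\cup \ell_1'$ close up to a knot $K_0$ whose $2$-handle cancels against a dual $1$-handle in a handle decomposition of $E(n)$ compatible with its elliptic fibration. The cocore of this $2$-handle is then a smoothly embedded disk $D \subset E(n)^\circ$ with boundary $K_0$, and tracking this cancellation shows that $[D]=0 \in H_2(E(n)^\circ, \partial E(n)^\circ)$, so $D$ is an $H$-slice disk.

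With $K_0$, $D$, and the matching of $T_{K_0}$ with $T$ in hand, the theorem assembles quickly. Theorem \ref{thm:ann_twi_log_trans} produces annulus twist homeomorphisms $\phi_k:S^3_0(K_0)\to S^3_0(K_k)$ which extend across the torus surgery to diffeomorphisms $\Phi_k: X_0(K_0)_{T_{K_0},\tau^k}\cong X_0(K_k)$. Gluing the exterior $E(D)=E(n)^\circ - \nu(D)$ onto both sides of this diffeomorphism yields the identification
\[
X_{D,\phi_k} \;=\; E(D) \cup_{\phi_k} -X_0(K_k) \;\cong\; E(n)_{T,\tau^k} \;=\; X_k,
\]
and the $X_k$ are mutually exotic by the Fintushel--Stern calculation of Seiberg--Witten invariants together with the fact that the twist knots $\kappa_k$ have pairwise distinct Alexander polynomials.

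The main obstacle is the explicit geometric matching in the first two steps. Producing an annulus presentation of a knot whose associated torus lands in the desired position inside $E(n)$ is a delicate Kirby-theoretic construction: one must simultaneously arrange the framings so that the torus twist $\tau^k$ of Theorem \ref{thm:ann_twi_log_trans} realizes the correct surgery coefficient $-1/k$ on $C$, and arrange the band $\gamma$ so that the resulting $K_0$ bounds a handle-cancelling disk inside $E(n)$. Verifying $H$-sliceness of $K_0$ will then be the core of the proof, as it is tantamount to producing a handle diagram of $E(n)$ in which the trace $X_0(K_0)$ sits nullhomologously and the Fintushel--Stern torus is visible inside it.
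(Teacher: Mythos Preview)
Your overall strategy matches the paper's: locate the Fintushel--Stern torus $T=S^1\times C$ inside $E(n)$, enlarge its neighborhood to a nullhomologously embedded trace, and invoke Theorem~\ref{thm:ann_twi_log_trans} so that the torus surgery becomes an annulus twist on the boundary. But your proposal stops at an outline and, as you yourself note, leaves the central construction undone. The paper's proof is precisely the execution of that step, and its method is somewhat different from what you sketch.

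Rather than guessing an annulus presentation in $S^3$ and then trying to embed its trace into $E(n)$, the paper works in the opposite direction: it starts from $S^1\times V=\nu(T)$ inside $E(n)$ and \emph{ambiently} attaches two $2$-handles to cancel the two $1$-handles of $T\times D^2$. One $2$-handle is a vanishing cycle of the elliptic fibration (cancelling the $S^1$-factor $1$-handle); the other comes from a twice-punctured disk for $C$ in $E(\kappa_0)$ whose meridional boundary components are capped by cores of two further vanishing-cycle $2$-handles. This produces a concrete Kirby diagram (Figure~\ref{fig:kirby_trace}) of an embedded trace containing $T$, with explicit framings. Your suggestion to take $A$ as ``an annular neighborhood of $C$ inside a standard $S^3$ cross-section of the cusp neighborhood'' does not quite make sense: the fiber neighborhood is $T^2\times D^2$, not naturally sliced by $S^3$'s, and the annulus in the eventual presentation comes from the vanishing-cycle $2$-handle, not from $C$ itself.

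Two further points you miss. First, the orientation bookkeeping: the paper sets $K_k=-J_k$ so that $-X_0(K_k)=X_0(J_k)$ sits inside $X_k$ with the correct sign for the $H$-slice trace surgery definition. Second, and more substantively, the naive construction yields the trace $X_0(\kappa_{-2})$ with the torus twist acting as a self-map (a cork-type twist, since the $2$-handles are disjoint from the twisting torus). To obtain a genuine annulus-twist family with the band crossing the annulus, the paper modifies one of the ambient $2$-handles by a ribbon concordance (Figure~\ref{fig:kirby_trace_ribbon}); only then does the torus twist wrap the band over a $1$-handle and produce the annulus presentation of Figure~\ref{fig:ex_ann_twist}. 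Nullhomologousness of the trace is then read off directly: $T=S^1\times C$ generates $H_2$ of the trace and is nullhomologous in $E(n)$.
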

\begin{proof}
To prove the theorem it will suffice to decompose $X_k$ as $Y \cup_{\phi_k \circ f} X_0(J_k)$ with $X_0(J_0)$ nullhomologously embedded.
Here $f$ is some boundary identification $f:\partial Y \cong S^3_0(J_0)$ and $\phi_k$ are annulus twist homeomorphisms $\phi_k:S^3_0(J_0) \rightarrow S^3_0(J_k)$.
We take $K_k$ in the theorem to be $-J_k$ so that $-X_0(K_k) = X_0(J_k)$ and $X_k = Y \cup_{\phi_k \circ f} -X_0(K_k)$.
The nullhomologous trace embedding of $-X_0(K_0)$ into $X_0$ exhibits an $H$-slice $D$ in $X_0$ for $K_0$ by the $H$-slice trace embedding lemma (Lemma $3.5$ of \cite{zero_surg_exotic}).
Then $X_k = X_{D,\phi_k}$ establishing the theorem.

Each $\kappa_k$ has a surgery description as $\kappa_0$ with $-1/k$ Dehn surgery on the red knot $C$ in Figure \ref{fig:twist_knot_2}.
The required Dehn surgery is performed by removing a solid torus neighborhood $V$ of $C$ and regluing by $k$-fold Dehn twist $\tau^k$ along a longitude $L \subset \partial V$.
Then $S^1 \times E(K_k)$ is obtained from $S^1 \times E(K_0)$ by cutting out $S^1 \times V$ and regluing it by $Id \times \tau^k$.
This expresses $X_k$ as a torus surgery on the torus $T = S^1 \times C$ contained within $S^1 \times E(\kappa_0) \subset X_0$ and gluing map a torus twist on $S^1 \times L$. 
The torus $T = S^1 \times C$ is nullhomologous in $X_0$ because $C$ is nullhomologous in $E(\kappa_0)$.
We wish to ambiently attach $2$-handles to $S^1 \times V$ within $X_0$ so that we get an embedded knot trace.
The handle product decomposition of $S^1 \times V$ has a pair of $1$-handles coming from each each of the $S^1$ and $V$ factors.
The Fintushel-Stern construction automatically matches the $1$-handle coming from the $S^1$ factor to a cancelling $2$-handle.
For the $V$ factor, observe there is a twice punctured disk $\Sigma_0$ in $E(\kappa_0 \cup c)$ with boundary a longitude of $V$ and two meridians of $\kappa_0$.
%Note that we can take the boundary of $\Sigma'$ meeting $V$ to be disjoint from $L$ and the annulus that supports the Dehn twist $\tau^k$.
The Fintushel-Stern construction then matches the meridians of a $\{\theta\} \times \Sigma_0$ with the cores of a pair of $2$-handles.
This results in a disk $\Sigma$ with boundary running once over a copy of $V$ in $S^1 \times V$ which we thicken to get a $2$-handle.

\begin{figure}
\centering

\subfloat[]{{
    \fontsize{10pt}{12pt}\selectfont
    \def\svgwidth{2in}
    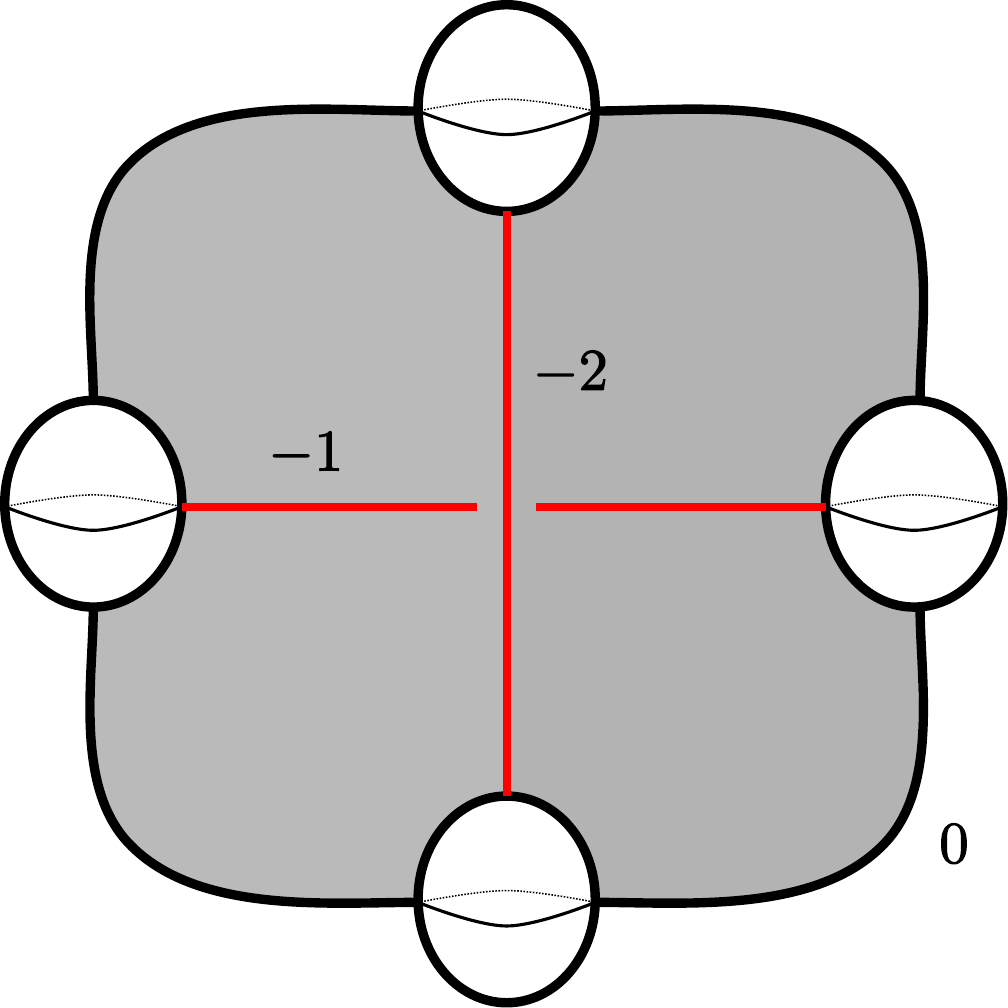
}
\label{fig:kirby_trace}
} \ \
\subfloat[]{{
    \fontsize{10pt}{12pt}\selectfont
    \def\svgwidth{2in}
    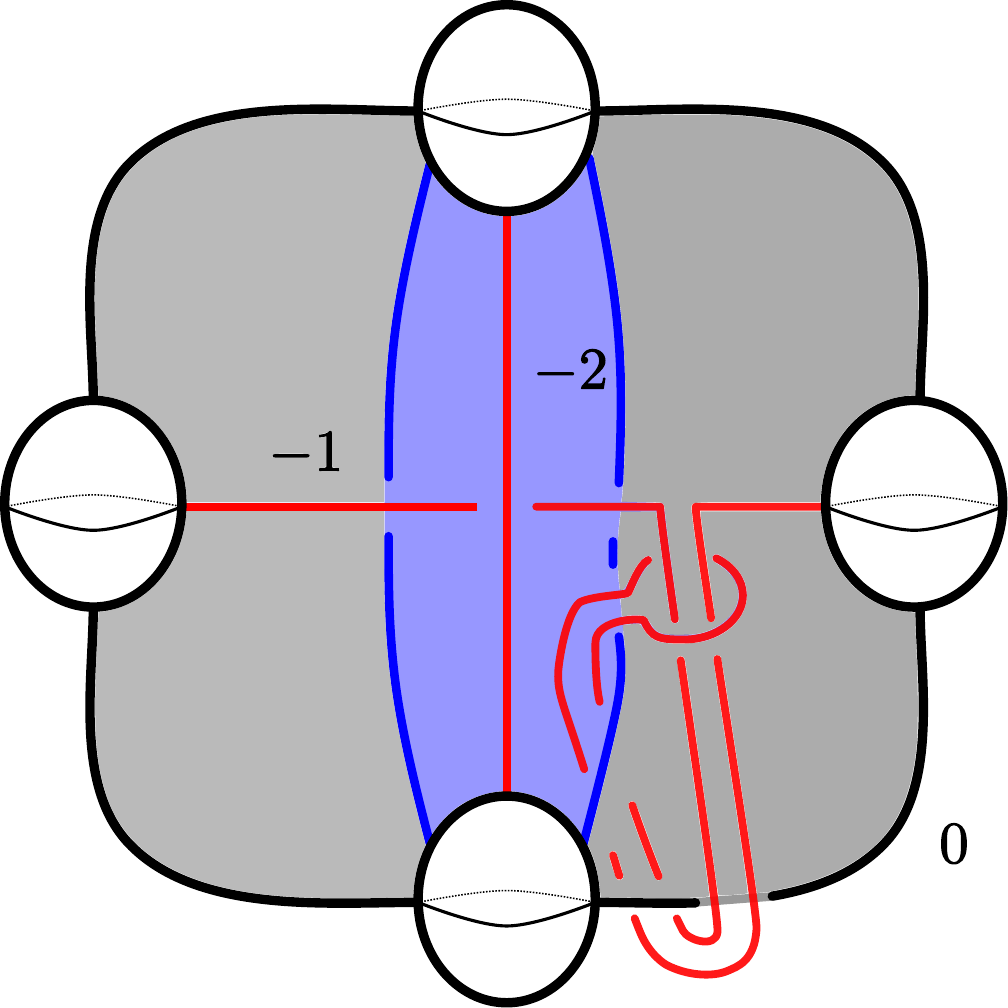
}
\label{fig:kirby_trace_ribbon}
}
\caption{Kirby diagrams of our knot traces}
\label{fig:kirby_diagram}
\end{figure}
In Figure \ref{fig:kirby_trace}, we have drawn a Kirby diagram of $S^1 \times V$ with the $2$-handles attached.
To get this figure, we start with a diagram of $S^1 \times V$ given by ignoring the red $2$-handles and the grey shaded region in Figure \ref{fig:kirby_trace}.
Each $2$-handle core we used to cap off to get disks decreases the framing of the corresponding thickened $2$-handle by $-1$ from the product framing.
The new $2$-handles are unaffected by the torus twist on $S^1 \times L$ and so the $4$-manifold is left unchanged.
Cancelling the handles in Figure \ref{fig:kirby_trace}, we see that $X_k$ is obtained from $X_0$ by cutting out $X_0(\kappa_{-2})$ and gluing it back in by a twist.
This is more akin to a cork and to get something more interesting, we modify the $-1$ framed $2$-handle in Figure \ref{fig:kirby_trace} by a ribbon concordance.
Observe that there is a concordance from the $-1$ framed $2$-handles going from Figure \ref{fig:kirby_trace_ribbon} to \ref{fig:kirby_trace}.
This concordance fits together with the core of the $-1$ framed $2$-handle in Figure \ref{fig:kirby_trace} to make up the core of the $2$-handle in Figure \ref{fig:kirby_trace_ribbon}.
The torus twist on $S^1 \times L$ now wraps the modified $2$-handle over the vertical $1$-handle.
The $1$-handles still cancel after twisting and we now get a family of knots with zero surgery homeomorphisms $\anntwifam$.
%A priori these $J_k$ are not distinct knots as we desired, this will be shown in the proof of Theorem \ref{thm:ex_trace_full}.
The trace $X_0(J_0)$ is nullhomologously embedded in $X_0$ because the torus $S^1 \times C$ still represents a generator of $H_2(X_0(J_0))$ and is nullhomologous in $X_0$.

The zero surgery homeomorphisms $\anntwifam$ induced by the torus surgeries are of the same kind constructed in the proof of Theorem \ref{thm:ann_twi_log_trans}.
Therefore, $\phi_k$ are given by annulus twisting $J_0$.
By working through the proof of that theorem backwards cancelling the $1$-handles, we get the corresponding annulus presentation.
This is best done by first swinging the vertical red $2$-handle in Figure \ref{fig:kirby_trace_ribbon} outside and on the left of the figure before cancelling.
Then pull the corners of $J_0$ in so that they run parallel to the boundary of the blue annulus until they band over the horizontal $1$-handle.
Then cancelling the horizontal $1$-handle leaves a band in the black $2$-handle following the red $2$-handle.
Then swing the inner boundary of the annulus behind and under to the front to clean up the diagram getting Figure \ref{fig:ex_ann_twist}.

\begin{figure}
\centering

    \fontsize{10pt}{12pt}\selectfont
    \def\svgwidth{2.5in}
    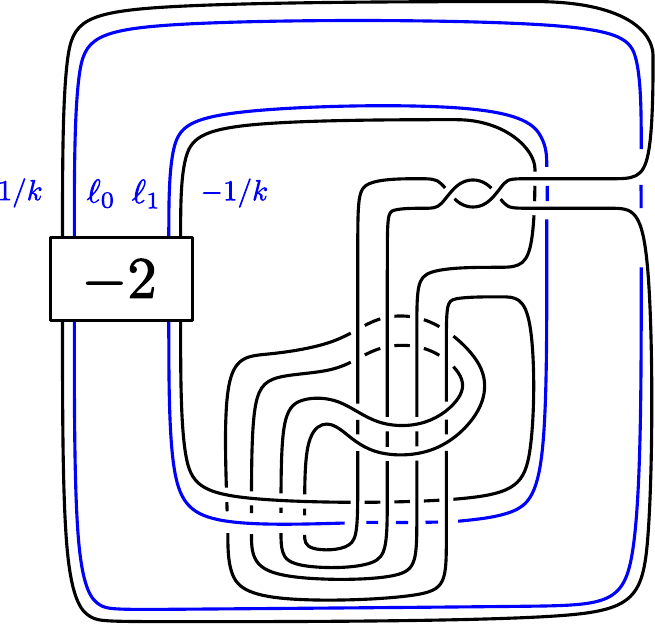
\caption{Annulus presentation of $J_0$ realizing the homeomorphism $\anntwifam$. The framings on $\ell_0$ and $\ell_1$ are relative to the annulus framings.}
\label{fig:ex_ann_twist}
\end{figure}
\end{proof}
Observe that if we cancel the $1$ and $2$-handles in Figure \ref{fig:kirby_trace}, we get the zero trace of $\kappa_{-2}$.
So we have a nullhomologous trace embedding of $-X_0(-\kappa_{-2})$ and can conclude that $-5_2 = -\kappa_{-2}$ is $H$-slice in every elliptic surface $E(n)$.
In particular, the $-5_2$ knot is $H$-slice in the $K3$ surface.
Manolescu, Marengon, and Piccirillo studied sliceness of knots in $K3$ and showed that all knots with unknotting number at most two are slice in $K3$ \cite{rel_gen_indef_manifold}.
Marengon and Mihajlovi{\'c} later extended to unknotting number $21$ \cite{marengon2022unknotting}.
This shows that general sliceness is a much weaker notion for the $K3$ surface than it is for the classical setting in $B^4$.
However, with $H$-sliceness we see a very different picture.
\begin{thm}
The $-5_2$ knot is $H$-slice in the $K3$-surface and bounds an $H$-slice disk with infinitely many distinct $H$-slice trace surgeries.
The $-5_2$ knot is the smallest crossing number non-trivial knot that is $H$-slice in $K3$ and is the only prime non-slice knot with six or fewer crossings that is $H$-slice in $K3$.
\label{thm:Hslice_in_K3}
\end{thm}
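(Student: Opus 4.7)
The $H$-sliceness of $-5_2$ in $K3 = E(2)$ together with the existence of infinitely many distinct $H$-slice trace surgeries follows directly from Theorem \ref{ex_Hslice_full} applied with $n=2$. As the remark preceding Theorem \ref{thm:Hslice_in_K3} records, cancelling the $1$-$2$-handle pair in Figure \ref{fig:kirby_trace} recovers the zero trace of $\kappa_{-2} = 5_2$, so one obtains a nullhomologous trace embedding $-X_0(-5_2) \hookrightarrow K3$, from which the $H$-slice trace embedding lemma of \cite{zero_surg_exotic} extracts an $H$-slice disk $D$. The resulting family $\{X_{D,\phi_k}\}_{k\in\Z}$ coincides with the Fintushel-Stern knot surgery manifolds $\{X_{\kappa_k}\}$, which are pairwise non-diffeomorphic via their Seiberg-Witten invariants recorded by $\Delta_{\kappa_k}(t)$.

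\textbf{Setup for minimality.} The nontrivial prime knots with at most six crossings are $3_1, 4_1, 5_1, 5_2, 6_1, 6_2, 6_3$. Of these only $6_1$ is slice, and $4_1$ and $6_3$ are fully amphichiral. Both minimality claims therefore reduce to showing that none of $\pm 3_1, 4_1, \pm 5_1, +5_2, \pm 6_2, 6_3$ is $H$-slice in $K3$. A useful reduction is the stability principle: an $H$-slice disk $D \subset X^\circ$ for $K$ sits inside $(X \# Y)^\circ$ as an $H$-slice disk for $K$ in $X \# Y$, so any obstruction to $H$-sliceness in $K3 \# N$ automatically descends to one in $K3$ itself.

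\textbf{Obstructions and main obstacle.} The introduction already records that $T_{2,3}$ is not $H$-slice in $K3 \# \CPbar$ \cite{rel_gen_indef_manifold}, which by stability immediately excludes $\pm 3_1$. For each remaining knot the plan is to apply the recently developed $H$-slice obstructions in indefinite $4$-manifolds \cite{rel_gen_indef_manifold}, combined with classical signature-type inequalities and knot Floer bounds (via $\tau$ or the $s$-invariant), working in an appropriate stabilization $K3 \# N$ and descending. The main obstacle is handling the amphichiral knots $4_1$ and $6_3$: both orientations must be obstructed simultaneously, and vanishing signature renders one-sided signature inequalities useless. These cases likely require the finer Bauer-Furuta or involutive refinements that detect structure beyond the classical numerical slice invariants. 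The remaining chiral cases should succumb to the elementary gauge-theoretic bounds once oriented in the direction with the wrong sign of signature or $\tau$ compared with $-5_2$. After the finite list of small knots is dispatched, the minimality statements follow.
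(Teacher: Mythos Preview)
Your existence paragraph is fine and matches the paper's argument.

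The minimality argument, however, has a real gap: you never actually obstruct any of the knots on your list, you only outline a plan and flag the amphichiral cases $4_1$ and $6_3$ as the ``main obstacle''.  In fact these cases, and all the others except $5_2$, are dispatched by a single elementary observation that you missed.  Since $K3$ is spin, Robertello's theorem forces any knot $H$-slice in $K3$ to have vanishing Arf invariant.  A KnotInfo check shows that among prime knots with six or fewer crossings only $5_2$ and $6_1$ have $\mathrm{Arf}=0$; in particular $3_1,4_1,5_1,6_2,6_3$ are all excluded in one stroke, with no need for gauge theory, and the amphichiral knots are not an obstacle at all (Arf is insensitive to mirroring).  The only remaining case is to rule out $+5_2$, and here the paper uses a Legendrian representative with $tb=1$ together with the $H$-slice adjunction-type bound of Corollary~1.8 of \cite{bauer_furuta_Hslice}.

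Your proposed toolkit (signatures, $\tau$, $s$, stabilization) is not obviously adequate: $\tau$ and $s$ obstruct sliceness in $B^4$, not $H$-sliceness in $K3$, and you give no indication how to deploy the MMP or Bauer--Furuta obstructions concretely on the knots in question.  So as written the proposal does not constitute a proof of the minimality claims.
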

\begin{proof}
The knot $\pm 6_1$ is slice in $B^4$ and therefore trivially $H$-slice in $K3$.
So we exclude $\pm 6_1$ from discussion.
$H$-sliceness of $-\kappa_2=-5_2$ in $K3$ follows from the proof of Theorem \ref{ex_Hslice_full}, however it can easily be shown directly.
%We can draw a Kirby diagram of $K3$ without $1$-handles that has a $2$-handle attached to a $(-2)$-framed unknot (see Section $8.3$ of \cite{4_Mfld_Kirby}).
The $K3$ surface admits a Lefschetz fibration with a $(-2)$-sphere as a section.
This implies that there is a disk $D \subset K3^\circ$ with self intersection number $[D]^2 = -2$ and unknotted boundary $\partial D = U$.
By Lemma $2.10$ of \cite{rel_gen_indef_manifold}, the twisted Whitehead double $WH^+_2(U)$ is $H$-slice in $K3$ which can be easily recognized as $-5_2$.
%take a pair of oppositely oriented push offs $D^-,D^+$ of $D$ so that $\partial (D^- \cup D^+)$ have boundary the twisted cable $C_{2,4}(U)$.
% twisted Whitehead double $WH^-_2(U)$ is $H$-slice in $K3$.

To show that no other non-slice knot with six or fewer crossings is $H$-slice in $K3$, we first use the arf invariant.
Any knot $H$-slice in a spin $4$-manifold such as $K3$ must have vanishing arf invariant by Robertello \cite{Robertello}.
Knotinfo shows that among prime knots with six or fewer crossings, only $5_2$ and $6_1$ have trivial Arf invariant \cite{knotinfo}.
We are not quite done yet, as we have to show that $5_2$ is not $H$-slice in $K3$.
This is because $H$-sliceness is sensitive to chirality, unlike classical sliceness in $B^4$.
One can easily draw a Legendrian diagram of $5_2$ with Thurston-Bennequin number $1$ and conclude that $5_2$ is not $H$-slice in $K3$ by Corollary $1.8$ of \cite{bauer_furuta_Hslice}.
\end{proof}
\begin{remark}
It turns out that the $H$-slice disks for $-5_2$ in $K3$ constructed in the proofs of Theorems \ref{ex_Hslice_full} and \ref{thm:Hslice_in_K3} are distinct.
This can be seen by explicitly checking that the smooth structure of $K3$ is unchanged by cutting and gluing the embedded trace corresponding to the $H$-slice disk in Theorem \ref{thm:Hslice_in_K3}.
This is most easily done using a Kirby diagram of $X_{-2}(U)$ showing the embedding of $-X_0(-5_2)$.
Moreover, the construction of $X_1$ as an $H$-slice trace surgery on $X_0 = E(n)$ can be mimicked for any unknotting number one knot $K$ since the unknotting operation can be realized as a single nullhomologous twist as in the proof of Theorem \ref{ex_Hslice_full}.
Dunfield and Hoffman give an infinite family of unknotting number one knots $L_k$ such that the knot surgeries $X_{L_k}$ are all distinct \cite{hoff_sun}.
Running the construction of Theorem \ref{ex_Hslice_full} with these $L_k$ then gives an infinite collection of $H$-slice disks $D_k$ for $-5_2$.
These $D_k$ are then distinct as they have non-diffeomorphic $H$-slice trace surgeries $X_{D_k,\phi} = X_{L_k}$.
It may be possible to show these $D_k$ are topologically isotopic by using recent work on topological classification of $H$-slice disks \cite{ex_disks}.
However, we do not pursue that point here.
\end{remark}
The ribbon move going from Figure \ref{fig:kirby_trace} to Figure \ref{fig:kirby_trace_ribbon} was done to change the exotica rel boundary to absolute exotica.
The ribbon destroys the essential torus in the boundary which would obstruct the presence of a hyperbolic boundary.
Once we have a hyperbolic structure, Mostow rigidity would then guarantee the mapping class group agrees with the isometry group and is finite.
Generically this should be trivial.
Then there would be no non-identity boundary automorphisms that could extend to a diffeomorphism.
This is actually an application of a technique due to Akbulut and Ruberman that converts relatively exotic $4$-manifolds into absolutely exotic $4$-manifolds \cite{akb_rub}.
Here we make their construction explicit.
This allows us control over the handle structure and get exotic traces.
\begin{thm}
\label{thm:ex_trace_full}
There exists a family of knots with mutually exotic traces that
\begin{enumerate}
    \item Consists of infinitely many knots
    \item Related by annulus twisting
    \item Have the same slice genus
    \item Concordant to each other
    \item Have the same zero shake genus
    \item The knots' traces have the same genus function
\end{enumerate}
\end{thm}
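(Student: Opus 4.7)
The plan is to package the family produced by Theorem \ref{ex_Hslice_full}. That theorem furnishes an infinite family of knots $\{K_k\}_{k\in\Z}$, annulus twist homeomorphisms $\phi_k\colon S^3_0(K_0)\to S^3_0(K_k)$, and an $H$-slice disk $D$ for $K_0$ in $X=E(n)$ whose $H$-slice trace surgeries $X_{D,\phi_k}=X_k$ are mutually exotic, distinguished by their Seiberg-Witten invariants via Fintushel-Stern. Properties (1) and (2) hold by construction, and I would deduce (3), (5), and (6) from (4) using the remarks in the introduction: concordant knots have equal slice and shake genera, and the trace embedding induced by a concordance forces equality of the full minimal genus function.

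For property (4) I would work directly with the annulus presentation of $J_0$ in Figure \ref{fig:ex_ann_twist}. Inspecting that diagram, the pushed-off link $\ell_0\cup\ell_1$ bounds a pair of disjoint slice disks in $B^4$. The Dehn twisting by $1/k$ and $-1/k$ prescribed by the annulus twist can be swept through those slice disks via a movie of bands, producing an annular cobordism from $J_0$ to $J_k$ in $S^3\times I$; mirroring yields the desired concordance from $K_0$ to $K_k$, hence concordance between any two $K_k$ and $K_{k'}$.

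The crux is showing that the traces $X_0(K_k)$ are pairwise non-diffeomorphic. I would argue by contradiction: a diffeomorphism $\Psi\colon X_0(K_k)\to X_0(K_{k'})$ for some $k\neq k'$ would, via the decomposition $X_k = E(D)\cup_{\phi_k}-X_0(K_k)$, yield an identification $X_k\cong E(D)\cup_{\phi_{k'}\circ\Psi|_\partial}-X_0(K_{k'})$, reducing the problem to extending the resulting boundary reparametrization $\psi$ of $S^3_0(K_0)$ across $E(D)$. The ribbon move producing Figure \ref{fig:kirby_trace_ribbon} from Figure \ref{fig:kirby_trace} was engineered precisely for this: it kills the essential torus coming from the product region, so I would expect $S^3_0(K_0)$ to be hyperbolic, which I would verify numerically in SnapPy. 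Mostow rigidity then forces the mapping class group of $S^3_0(K_0)$ to be finite, and a case check on its generators should show each extends across $E(D)$, promoting $\Psi$ to a diffeomorphism $X_k\cong X_{k'}$ and contradicting Theorem \ref{ex_Hslice_full}.

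The main obstacle will be this final extension step, which is exactly the Akbulut-Ruberman mechanism for converting relative exotica into absolute exotica. Hyperbolicity and finiteness of the mapping class group are essentially verifiable by computer given the Kirby picture of the boundary, but checking that each nontrivial isometry extends across $E(D)$ demands a careful analysis of the symmetries visible in the handle diagram; this is where the ribbon modification of Theorem \ref{ex_Hslice_full} is really essential.
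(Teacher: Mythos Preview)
Your overall architecture is right, but the argument breaks at the step you yourself flag as the obstacle, and the paper takes a genuinely different route there.

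You propose to use the knots $K_k$ from Theorem \ref{ex_Hslice_full} directly and, assuming a trace diffeomorphism $\Psi$, reduce to extending a self-homeomorphism of $S^3_0(K_0)$ across $E(D)$. But $E(D)$ is the exterior of an $H$-slice disk inside $E(n)^\circ$; it has no manageable handle description, and there is no mechanism for checking that an arbitrary isometry of the boundary extends over it. Your ``case check on its generators'' is not a method, it is a hope, and in practice it is exactly the step that fails in the Akbulut--Ruberman machinery unless one has arranged things carefully in advance. The paper does \emph{not} attempt this extension. Instead it modifies the family: it blows up along the cores of the two red $2$-handles, changing the framings from $(-2,-1)$ to $(-3,-1-m)$, so that the ambient manifolds become $X_k\#(m+1)\CPbar$ (still mutually non-diffeomorphic) and the traces become $X_0(J^m_k)$. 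The point of this is to force $S^3_0(J^m_0)$ to be \emph{asymmetric} for large $m$: one verifies in SnapPy that the cusped manifold $S^3_0(J^0_0)-\gamma$ is hyperbolic with trivial symmetry group (a rigorous check for cusped manifolds), and then a Dehn-filling lemma guarantees asymmetry of the closed fillings for $m\gg 0$. With a trivial mapping class group, any zero surgery homeomorphism is isotopic to $\phi_k$ itself, which cannot extend to a trace diffeomorphism since that would force $X_k\#(m+1)\CPbar\cong X_{k'}\#(m+1)\CPbar$. This sidesteps the extension problem entirely; your proposal does not.

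Two smaller gaps. First, you never check that the traces are even homeomorphic; the paper invokes Boyer's criterion (via the even intersection form of the twisted double) to verify that $\phi_k$ extends to a trace homeomorphism. Second, your concordance argument is not correct as stated: the pushoffs $\ell_0,\ell_1$ have linking number equal to the annulus framing, which is nonzero here, so they do not bound disjoint slice disks in $B^4$. The paper instead exhibits explicit ribbon moves on the annulus presentation of $J^m_k$ that separate the band from the annulus and reduce every $J^m_k$ to the common double twist knot $\kappa(-3,-1-m)$, giving a ribbon concordance to a fixed knot independent of $k$.
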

\begin{proof}
\begin{figure}
\centering

    \fontsize{10pt}{12pt}\selectfont
    \def\svgwidth{3.5in}
    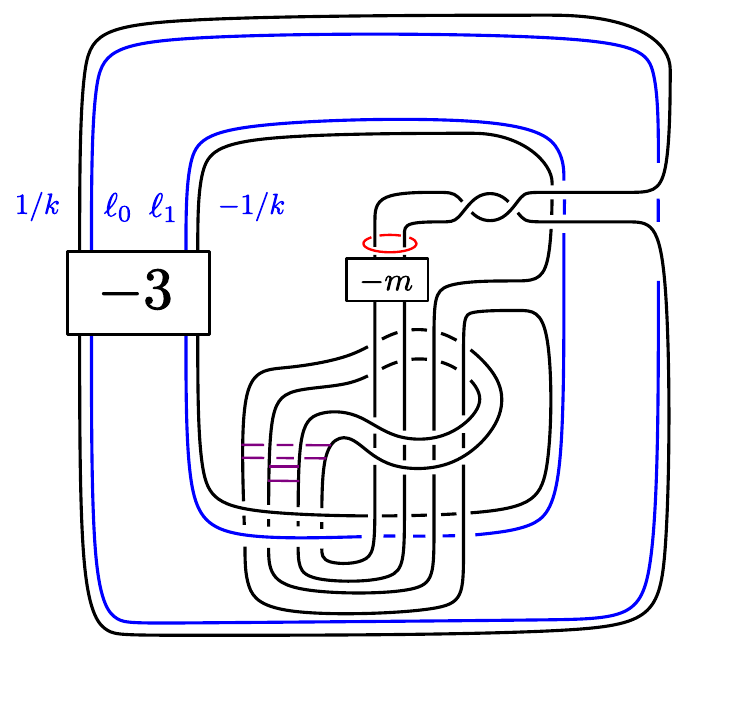
\caption{Annulus presentation of $J_0^m$ giving exotic knot traces. The framings on $\ell_0$ and $\ell_1$ are relative to the annulus framings.}
\label{fig:blow_up_ann_twist}
\end{figure}

First we blow up our $X_k$ by points on cores of the red $2$-handles in Figure \ref{fig:kirby_trace_ribbon}.
Blow up one point to change the $-2$ framing to $-3$ and $m \ge 0$ points to decrease the $-1$ framing to $-1-m$.
The ambient manifolds become the blowups $X_k \# (m+1)\CPbar$ which remain non-diffeomorphic.
Cancelling the $1$-handles we get the family of annulus twists $J^m_k$ in Figure \ref{fig:blow_up_ann_twist}.
The annulus twist homeomorphisms $\phi_k: S^3_0(J^m_n) \cong S^3_0(J^m_{n+k})$ cannot extend to a trace diffeomorphism $\Phi_k: X_0(J^m_n) \cong X_0(J^m_{n+k})$ .
Otherwise the act of cutting and gluing these traces would result in diffeomorphic total spaces.
However, we need to rule out the possibility that another homeomorphism $S^3_0(J^m_n) \cong S^3_0(J^m_{n+k})$ extends.
Observe that $S^3_0(J^m_0)$ can be described as $1/m$ surgery on the red curve $\gamma$ in $S^3_0(J_0^0)$ in Figure \ref{fig:blow_up_ann_twist}.
Using SnapPy, we check that $S^3_0(J^0_0) - \gamma$ is hyperbolic with a unique self homeomorphism up to isotopy \cite{SnapPy}.
Then for sufficiently large $m$, Lemma $2.2$ of \cite{asymm} guarantees that $S^3_0(J^m_0)$ is hyperbolic and asymmetric.
We now check that $\phi_k$ extends to a trace homeomorphism.
This follows from work of Boyer \cite{boyer}, which Manolescu-Piccirillo applied to give a handy criterion for zero traces: a zero surgery homeomorphism $\zsg$ extends to a trace homeomorphism if and only if the twisted double $X_0(K) \cup_\phi -X_0(K')$ has even intersection form (Theorem $3.7$ of \cite{zero_surg_exotic}).
Then the argument in Remark $6.6$ of \cite{zero_surg_exotic} applies in the exact same way to show that $\phi_k$ extends to a homeomorphism.
Now to see that all $J^m_k$ are concordant, observe that there is a pair of ribbon moves in Figure \ref{fig:blow_up_ann_twist} shown in purple.
These ribbon moves separate the band in the annulus diagram so that it becomes disjoint from the annulus resulting in the knot $\kappa(-3,-1-m)$.
This gives a ribbon concordance from $J^m_k$ to a common knot and so all $J_k$ are concordant to each other.
By the last bullet point before Theorem \ref{thm:ex_trace}, this concordance implies that these $J^m_k$ all have the same slice genus, shake genus, and the knots' traces have the same genus function.
\end{proof}
The above proof only gives \textit{existence} of the claimed exotica.
We are not able to explicitly pin down a family of annulus twists that realizes our claims.
However, the generic asymmetry of hyperbolic $3$-manifolds given by Mostow rigidity is very powerful.
We expect that any choice of $m \ge 0$ above should work.
For example, SnapPy calculates the symmetry group of $S^3_0(J^0_0)$ as trivial.
However, this calculation is not verified for closed $3$-manifolds and so we can not use it in a proof.
The calculation should be good evidence that $\{J_k^0\}$ is a family of annulus twists that realizes the claims of Theorem \ref{thm:ex_trace_full}.

\begin{comment}
Corollary \ref{cor:genus_fun} asserts that there are infinitely many exotic $4$-manifolds with identical non-trivial genus functions.
Theorem \ref{thm:ex_trace_full} gives exotic traces which have the same genus function because the knots are concordant.
However, it remains to check that the genus functions are non-trivial.
Note that by the bullet point preceding Theorem \ref{thm:ex_trace}, concordance preserves the genus function of knot traces.
In the proof above we showed that $J_k^m$ is concordant to a knot that turns out to be the double twist knot $\kappa(-3,-1-m)$.
It is not hard to put a Stein structure on $X_0(\kappa(-3,-1-m))$ and the adjunction inequality shows that $H_2(X_0(\kappa(-3,-1-m)))$ is not generated by a sphere.
Therefore, the genus function of $X_0(\kappa(-3,-1-m))$ is non-trivial and so is the genus function of $X_0(J_k^m)$.
\end{comment}

After the first counterexamples to the Akbulut-Kirby Conjecture appeared following Yasui,
Miller and Piccirillo noted that all known zero surgery homeomorphisms for which the Akbulut-Kirby Conjecture held were of slice knots.
It seemed that the Akbulut-Kirby Conjecture is false generally unless sliceness gives a reason for it to hold.
So they asked if there were any examples of distinct non-slice knots $K$ and $K'$ such that $S^3_0(K) \cong S^3_0(K')$ and $K$ and $K'$ are smoothly concordant (Question $1.16$ of \cite{trace_conc}).
The zero surgery homeomorphisms of Theorem \ref{thm:ex_trace} answer this question in the positive.
In the following section, we show that we can put a Stein structure on these knots' traces so the adjunction inequality implies that they are not slice.

\section{Exotic Stein Traces}
\label{sec:stein}
The boundary of a Stein surface naturally inherits the geometric structure of a tight contact $3$-manifold.
We call such a Stein surface a Stein filling of the boundary contact $3$-manifold.
There has been much work classifying Stein fillings of a contact $3$-manifold.
For example, Eliashberg showed that the $3$-sphere has a unique Stein filling \cite{fillingS3} and Wendl showed that $3$-torus does as well \cite{filltorus}.
Complementing this, it is of much interest then to construct distinct fillings of a contact $3$-manifold.
Following Eliashberg \cite{stein_yasha}, a natural way to construct Stein surfaces is by Stein handle attachments to Legendrian knots in the contact boundary of a Stein $\natural n S^1 \times B^3$.
When attaching a Stein handle to a Legendrian knot in $B^4$, we call the resulting Stein surface the Stein trace of the Legendrian knot.
The purpose of this section is to provide the first construction of exotic Stein traces.

In the previous section, we constructed an interesting family of exotic $4$-manifolds $X_0(J_k^m)$, $k \in \Z$ and $m$ sufficiently large.
It is a natural question then to ask if we can upgrade this to a family of exotic Stein fillings.
To do this, recall that $X_0(J_0^m)$ arise by blowing up and decreasing the $-2$ and $-1$ framings to $-3$ and $-1-m$ in Figure \ref{fig:kirby_trace_ribbon}.
Note that these $2$-handles are attached to $T \times D^2$ which admits a Stein structure as the disk cotangent bundle of a torus.
The $-3$ framed handle attachment can be made a Stein handle attachment easily by examining the Stein Kirby diagram of $T \times D^2$.
By taking $m$ large enough, we can guarantee that the $-1-m$ framing is sufficiently negative to be a Stein handle attachment as well.
We conclude that $X_0(J_0^m)$ is Stein.
Now $X_0(J_k^m)$ is obtained by a torus surgery on $X_0(J_0^m)$ on the symplectically embedded Stein $T \times D^2$ contained within it.
Since the zero section of $T \times D^2$ is Lagrangian, we can take this torus surgery to be a Luttinger surgery preserving the contact boundary.
We have now realized the exotic traces $X_0(J_k^m)$ as an infinite exotic family of Stein surfaces that fill the same contact $3$-manifold.

The discussion above realizes our exotic traces as exotic Stein fillings.
However, it does not realize our exotic traces as the Stein traces of Legendrian knots.
To do this we use a contact analogue of annulus twisting introduced by Casals, Etnyre, and Kegel.
They used this to construct an infinite collection of Legendrian knots that share a Stein trace \cite{contact_ann_twist}.
We now use their construction to compliment their result to construct an infinite collection of Legendrian knots that have homeomorphic, but non-diffeomorphic Stein traces.
First though we need to recall what we need of Casals, Etnyre, and Kegels' construction.
\begin{mydef}
Let $\ell_0$ be a Legendrian knot in $(M,\xi)$.
A pre-Lagrangian annulus $A$ in $(M,\xi)$ in the knot type $\ell_0$ is any embedded annulus formed by flowing the Legendrian knot $\ell_0$ for a short time under a Reeb flow associated to a contact form for $(M,\xi)$.
\end{mydef}
From this we can define the contact analogue of an annulus presentation.
\begin{mydef}
A contact annulus presentation $(A,\gamma)$ of a Legendrian knot $J_0$ in $S^3$ a diagram exhibiting $J_0$ as framed Legendrian push offs $\ell_0' \cup \ell_1'$ of a pre-lagrangian annulus $A$ embedded in $S^3$ banded together by a legendrian band $\gamma$.
\end{mydef}
Note that if we forget the contact structure, a contact annulus presentation is just a regular annulus presentation.
This then determines a family of knots and homeomorphisms identifying these knots' Dehn surgeries.
Casals, Etnyre, and Kegel show that then remembering the contact structure we get contactomorphisms of the contact $(-1)$-surgeries, i.e. the contact boundaries of the knots' Stein traces.
\begin{prop}(Theorem $3.6$ of \cite{contact_ann_twist})
Associated to a contact annulus presentation of a Legendrian knot $J_0$ there is an infinite family of Legendrian knots $\{J_k\}_{k\in \Z}$ such that the contact $(-1)$-surgeries are contactomorphic.
\end{prop}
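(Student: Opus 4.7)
The plan is to directly mimic the proof of Proposition \ref{prop:ann_twist}, upgrading each ingredient to its contact analog. Recall that the smooth proof used two annulus twists that combine to give $\phi_k$: one annulus twist along $A' \subset S^3_0(J_0)$ realizing the identity up to Dehn surgery on $\ell_0 \cup \ell_1$, and another along $A \subset S^3$ that undoes those surgeries while converting $J_0$ into $J_k$. In the contact setting, smooth annulus twisting must be replaced by a contact operation that preserves the contact structure on the surgered boundary. The natural candidate is a contact Dehn twist along the Legendrian boundary curves of a pre-Lagrangian annulus, which can be realized as a pair of contact surgeries on the Legendrian push-offs with slopes $1/k$ and $-1/k$ measured relative to the pre-Lagrangian framing.

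First I would identify the relevant pre-Lagrangian annulus $A'$ living in the boundary of the contact $(-1)$-surgery $(S^3_{-1}(J_0), \xi_0)$. The Legendrian band $\gamma$ and the Legendrian push-offs $\ell_0', \ell_1'$ cobound a punctured pre-Lagrangian annulus in $(S^3, \xi_{std})$ whose boundary component along $J_0$ carries the Thurston-Bennequin framing; after contact $(-1)$-surgery this component is capped off by the cocore of the Weinstein $2$-handle to yield the pre-Lagrangian annulus $A'$. Next, following the smooth argument, contact twisting $A'$ inside $(S^3_{-1}(J_0), \xi_0)$ produces a contactomorphism from $(S^3_{-1}(J_0), \xi_0)$ to the manifold obtained from it by contact $1/k$ and $-1/k$ surgery on $\ell_0'$ and $\ell_1'$. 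Similarly, contact twisting $A$ inside $(S^3, \xi_{std})$ provides a contactomorphism from $(S^3, \xi_{std})$ to its contact $(1/k,-1/k)$-surgery on $\ell_0 \cup \ell_1$, which carries $J_0$ to a new Legendrian knot $J_k$.

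Composing these two contactomorphisms and tracking the effect on the $(-1)$-surgery on $J_0$ yields a contactomorphism $(S^3_{-1}(J_0), \xi_0) \cong (S^3_{-1}(J_k), \xi_k)$, as desired. The resulting family $\{J_k\}_{k \in \Z}$ underlies the smooth family of Proposition \ref{prop:ann_twist} and inherits the infiniteness of distinct knot types from it.

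The main obstacle, and the step I expect to require the most care, is verifying the contact surgery calculus: one must check that contact Dehn twists along the Legendrian boundary components of a pre-Lagrangian annulus genuinely realize the advertised contact $\pm 1/k$ surgeries, that these surgeries preserve tightness and produce the claimed contactomorphism rather than a mere homeomorphism, and that the framing conventions (pre-Lagrangian, Thurston-Bennequin, and band framing) are all mutually compatible so that the composite contactomorphism agrees with Osoinach's annulus-twist homeomorphism underneath. Once these contact framing bookkeeping issues are settled, everything else is a formal contact version of the earlier smooth argument.
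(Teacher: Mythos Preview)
The paper does not prove this proposition at all: it is stated as a citation (Theorem~3.6 of Casals--Etnyre--Kegel) and used as a black box. Immediately after the proposition the paper simply remarks that the underlying topological knot types agree with those from smooth annulus twisting, and then moves on to apply the result. So there is no proof in this paper to compare your proposal against.

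Your outline is a plausible sketch of what the Casals--Etnyre--Kegel argument might look like, and the overall strategy of upgrading each step of the smooth annulus-twist proof to a contact version is the natural one. But since the present paper treats the result as imported, your proposal goes well beyond what is required here; for the purposes of this paper a one-line citation suffices. If you want to verify your sketch, you would need to consult the original Casals--Etnyre--Kegel paper rather than this one.
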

It is clear from the proof, though not explicitly stated by Casals, Etnyre, and Kegel, that the resulting Legendrian knots' topological knot type is the same as that arises from annulus twisting while ignoring the contact structures.
We are now ready to upgrade our exotic traces into exotic Stein traces.

\begin{comment}
\begin{thm}
%
Let a knot $J_0$ admit an annulus presentation with associated homeomorphisms $\anntwifam$.
If $J_0$ admits a Legendrian representative so that this annulus presentation is a contact annulus presentation, then all $\{J_k\}$ admit Legendrian representatives with Thurston-Bennequin number one and $\phi_k$ can be take to be a contactomorphism of the contact $(-1)$-surgeries.
Furthermore, the torus $T$ in Theorem \ref{thm:ann_twi_log_trans} can be taken to be Lagrangian, the corresponding torus surgery $X_0(J_0)_{T,\tau^k} = (X_0(J_0) -\nu(T)) \cup_{\tau^k} T\times D^2$ to be a Luttinger surgery, and the diffeomorphism $\Phi_k: X_0(J_0)_{T,\tau^k} \cong X_0(J_k)$ to be a symplectomorphism.
%
\label{thm:ann_twi_luttinger}
\end{thm}
\end{comment}
%

\begin{thm}
There exists an infinite family of Legendrian knots related by contact annulus twisting whose Stein traces are homeomorphic, but non-diffeomorphic Stein fillings of the same contact $3$-manifold.
\end{thm}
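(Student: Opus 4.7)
The plan is to upgrade the exotic traces $X_0(J_k^m)$ from Theorem \ref{thm:ex_trace_full} into exotic Stein traces by finding Legendrian representatives of the $J_k^m$ whose Stein traces recover these $4$-manifolds and whose contact boundaries are identified. The central hinge is the contact annulus twist construction of Casals, Etnyre, and Kegel, combined with the Luttinger surgery realization of the torus surgery already set up in the paragraph preceding the theorem.

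First, I would produce a Legendrian representative $\ell_0^m$ of $J_0^m$ with Thurston--Bennequin number $tb(\ell_0^m) = 1$ such that the annulus presentation in Figure \ref{fig:blow_up_ann_twist} is realized as a contact annulus presentation, i.e., the annulus $A$ is pre-Lagrangian and the band $\gamma$ is Legendrian. The $tb = 1$ condition is exactly what forces the Stein handle attachment (which has framing $tb-1$) to be zero-framed, so that the resulting Stein trace is diffeomorphic to $X_0(J_0^m)$. Moreover, this Stein structure agrees with the one described in the preceding section, since both are built by Weinstein handle attachments to the Stein neighborhood of a Lagrangian torus. For sufficiently large $m$ the blown-up framings $-3$ and $-1-m$ in Figure \ref{fig:kirby_trace_ribbon} are automatically compatible with the Stein handle inequalities, which is the reason we take $m$ large.

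Second, I would invoke the contact annulus twist of Casals--Etnyre--Kegel (Theorem $3.6$ of \cite{contact_ann_twist}) applied to $(A,\gamma)$. This outputs Legendrian representatives $\ell_k^m$ of $J_k^m$ for every $k \in \Z$, together with contactomorphisms of their contact $(-1)$-surgeries. The construction preserves $tb$, so every $\ell_k^m$ again satisfies $tb = 1$, and hence its Stein trace is diffeomorphic to $X_0(J_k^m)$. The contact $(-1)$-surgery, which is the common contact boundary of all these Stein traces, is therefore one fixed contact $3$-manifold. One can also see this directly from the Luttinger surgery description: the torus $T$ in Theorem \ref{thm:ann_twi_log_trans} is Lagrangian in the Stein structure on $X_0(J_0^m)$, so the torus surgery producing $X_0(J_k^m)$ is Luttinger and leaves the contact boundary invariant.

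With these Stein traces in hand, Theorem \ref{thm:ex_trace_full} immediately yields that for $m$ sufficiently large they are pairwise homeomorphic but pairwise non-diffeomorphic, completing the proof. The main obstacle I anticipate is the first step: explicitly producing a Legendrian $\ell_0^m$ with $tb = 1$ for which the annulus and band of Figure \ref{fig:blow_up_ann_twist} simultaneously sit in pre-Lagrangian and Legendrian position. This is essentially a front-projection computation combined with Legendrian realization on the complementary surface, but one must be careful that the blowups used to pass from $J_0$ to $J_0^m$ do not destroy the contact annulus structure. Once that is verified, the remainder of the argument is packaged cleanly by Casals--Etnyre--Kegel and the Luttinger-surgery formalism.
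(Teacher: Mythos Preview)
Your proposal is correct and follows essentially the same route as the paper: upgrade the annulus presentation of Figure \ref{fig:blow_up_ann_twist} to a contact annulus presentation, apply Casals--Etnyre--Kegel to identify the contact boundaries, and cite Theorem \ref{thm:ex_trace_full} for the exotic part. The paper resolves your anticipated obstacle by building the contact annulus presentation directly from its pieces---a negatively stabilized Legendrian unknot for the $-3$-framed annulus, then a Legendrian band with $m$ chosen (or further stabilizations added) so that the contact framing matches the required number of left-handed twists---rather than attempting to Legendrian-realize $J_0^m$ all at once.
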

\begin{proof}
We show that the exotic traces of Theorem \ref{thm:ex_trace_full} can be modified appropriately.
To do this we mimic the second paragraph of this section to modify the annulus presentation generating our exotic traces into a contact annulus presentation.
The annulus presentation generating the exotic traces is shown in Figure \ref{fig:blow_up_ann_twist}.
The annulus in that presentation is described by the $-3$ framing on the unknot.
We can easily find a Legendrian unknot that describes a contact annulus in this topological type by adding negative stabilizations to the standard Legendrian unknot.
Now we need to ensure that the band in Figure \ref{fig:blow_up_ann_twist} can be taken to be Legendrian.
Recall that the $m$ left handed twists in that band were added so that there were sufficiently many to guarantee that $S^3_0(J_k^m)$ was asymmetric.
Take any Legendrian arc following the band, if the induced framing has enough twists to ensure asymmetry, we are done.
If not, add negative stabilizations to the Legendrian arc to add sufficiently many left handed twists.
Now we have a contact version of the annulus presentation generating the exotic traces of Theorem \ref{thm:ex_trace_full}.
The resulting Legendrian knots have exotic Stein traces which have the same contact boundary.
\end{proof}
%
%Two corollaries immediately follow.

\begin{cor}
There exist infinitely many homeomorphic, but non-diffeomorphic Stein surfaces with $b_2 =1$ that are Stein fillings of the same contact $3$-manifold.
\qed
\end{cor}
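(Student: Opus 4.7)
The plan is to observe that the corollary follows immediately from the preceding theorem once we record the handle decomposition of a Stein trace. Given a Legendrian knot $\ell$ in the standard contact $S^3 = \partial B^4$, its Stein trace is obtained by attaching a single Stein $2$-handle to $B^4$ along $\ell$ with framing $tb(\ell) - 1$. This is a handle decomposition with one $0$-handle and one $2$-handle, so the resulting Stein surface $W_\ell$ has $b_0 = 1$, $b_1 = 0$, and $b_2 = 1$, with the $2$-handle contributing a generator of $H_2$.

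Applying the preceding theorem, we obtain an infinite family $\{\ell_k\}_{k \in \Z}$ of Legendrian knots whose Stein traces $\{W_{\ell_k}\}_{k \in \Z}$ are pairwise homeomorphic but pairwise non-diffeomorphic, and all fill the same contact $3$-manifold $(Y,\xi)$. Since each $W_{\ell_k}$ has the handle structure above, each has $b_2 = 1$. Thus $\{W_{\ell_k}\}_{k \in \Z}$ is the desired infinite family, proving the corollary.

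The only point to double check is that the infinite family really consists of pairwise non-diffeomorphic manifolds, not merely that consecutive ones differ, but this is exactly the content of the preceding theorem (the exotic traces $X_0(J_k^m)$ from Theorem \ref{thm:ex_trace_full} are mutually exotic, and the Stein trace construction of the theorem is built directly on top of that family). There is no additional obstacle: no deeper symplectic or gauge-theoretic input is needed at this step, since all of the work is already packaged into the theorem. The corollary is essentially a bookkeeping statement recording that the Stein fillings produced have the smallest possible non-trivial $b_2$.
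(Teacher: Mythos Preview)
Your proposal is correct and matches the paper's approach exactly: the paper marks this corollary with a bare \qed, treating it as immediate from the preceding theorem, and your observation that a Stein trace has one $0$-handle and one $2$-handle (hence $b_2=1$) is precisely the trivial bookkeeping that justifies this.
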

\section{A hidden fishtail symmetry}
A fishtail neighborhood $F$ is a basic building block of $4$-manifolds.
It can be described as $T \times D^2$ with a $2$-handle attached to an essential circle $\alpha$ of the torus $S = T \times \{\theta\}$ with framing $-1$ relative to the product framing.
Let $\tau^k$ be a $k$-fold torus twist of $\partial (T \times D^2)$ along the torus $S$ parallel to $\alpha$.
Suppose $F$ is embedded in some $4$-manifold $X$ and so we can form the torus surgery $X_{T,\tau^k}$ on $T$ contained within $F$.
It turns out that there is a diffeomorphism $X_{T,\tau^k} \cong X$ called a fishtail symmetry.
Fishtail symmetries originally arose through Moishezon's classification of elliptic surfaces \cite{moish}.
Gompf re-contextualized Akbulut's proof that the main family of Cappell-Shaneson spheres are standard\cite{CS_standard} using fishtail symmetries \cite{More_Cappell}.
This allowed Gompf to simplify and generalize prior work on the Cappell-Shaneson spheres.
The main observation of this section is the presence of a hidden fishtail symmetry in previous constructions of trace diffeomorphisms.
Using this we will construct new potential counterexamples to the smooth $4$-dimensional Poincar\'e Conjecture.
First though, we reexamine certain trace diffeomorphisms using fishtail symmetries.
\begin{prop}
Suppose $\anntwifam$ has annulus presentation $(A,\gamma)$ where  $A$ is an annulus described by $-1$ framing on the unknot.
Then $\phi_k$ extends to a trace diffeomorphism $\Phi_k:X_0(J_0) \cong X_0(J_k)$.
\label{prop:fishtail}
\end{prop}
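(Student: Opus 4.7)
The plan is to combine Theorem \ref{thm:ann_twi_log_trans} with a fishtail symmetry argument. By that theorem, the annulus twist homeomorphism $\phi_k$ is already realized as a torus surgery on a torus $T \subset X_0(J_0)$, together with a diffeomorphism $\Phi_k: X_0(J_0)_{T,\tau^k} \cong X_0(J_k)$ extending $\phi_k$ on the boundary. To promote this into a trace diffeomorphism, it suffices to produce a diffeomorphism $\Psi_k: X_0(J_0) \cong X_0(J_0)_{T,\tau^k}$ supported in the interior, for then $\Phi_k \circ \Psi_k$ will extend $\phi_k$.

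Next I would identify a fishtail neighborhood $F \subset X_0(J_0)$ containing $T$, and this is precisely where the hypothesis on $A$ enters. Recall from the proof of Theorem \ref{thm:ann_twi_log_trans} that $X_0(J_0)$ decomposes as the trivial torus bundle $T \times D^2$ with two $2$-handles $B_v$ and $B_h$ attached (the blue handles in Figure \ref{fig:ann_log_3}), where $B_v$ is the handle induced by the annulus $A$. The attaching circle of $B_v$ lies on the torus $T$ along the annular direction of $A$, and its framing relative to the product framing on $T \times D^2$ is exactly the framing of the unknot that $A$ is a neighborhood of, namely $-1$. Thus $F = (T \times D^2) \cup B_v$ is a fishtail neighborhood of $T$ embedded in $X_0(J_0)$.

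Finally I would invoke the fishtail symmetry. The torus twist $\tau^k$ used in the torus surgery is parallel to the annular direction of $A$, which is exactly the direction of the essential attaching circle of $B_v$ on $T$. Hence the standard fishtail symmetry applies inside $F$, providing a diffeomorphism $F \cong F_{T,\tau^k}$ that restricts to the identity on $\partial F$. Extending by the identity across $X_0(J_0) \setminus \inter(F)$ produces the desired $\Psi_k$, which is the identity on $\partial X_0(J_0)$ since the modification happens entirely in the interior.

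The hard part will be the second step: carefully verifying that the torus $T$ built in the proof of Theorem \ref{thm:ann_twi_log_trans} sits inside a fishtail neighborhood when $A$ is the $-1$ framed unknot annulus. This requires matching framing conventions (that the $-1$ annular framing of the unknot really produces a fishtail-framed $2$-handle relative to the product framing on $T \times D^2$) and confirming that the torus twist direction coincides with the attaching circle of $B_v$ on $T$. This is essentially Kirby-calculus bookkeeping on Figures \ref{fig:ann_log_0}--\ref{fig:ann_log_3}, but it is where the geometric content of the proof resides.
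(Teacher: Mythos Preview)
Your proposal is correct and follows essentially the same approach as the paper: apply Theorem~\ref{thm:ann_twi_log_trans} to view $\phi_k$ as a torus surgery, observe that the $-1$ framing on $A$ makes the handle $B_v$ together with $T\times D^2$ a fishtail neighborhood, and then use the fishtail symmetry to trivialize the torus surgery rel boundary. The only real difference is packaging: you invoke the fishtail symmetry as a known black box, whereas the paper reproves it in situ by explicitly pushing the $-1$ framed $2$-handle around $S$ and sliding the other handle over it (and also gives a parallel diagrammatic argument via Figure~\ref{fig:fishtail_trace}).
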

\begin{proof}
Applying the proof of Theorem \ref{thm:ann_twi_log_trans} to such an annulus presentation of $J_0$ to get a diagram like Figure \ref{fig:ann_log_3} with $n=-1$.
We have a diagram of $T \times D^2$ with two cancelling $2$-handles.
That theorem says that annulus twisting is the same as a torus surgery on $T$ with homeomorphism a $k$-fold torus twist $\tau^k$ on $S = T \times \{\theta\}$ in the direction inherited from $A$.
The condition on $A$ implies that we have a $2$-handle $H$ attached to a curve $\alpha \subset S$ running parallel to $A$ with framing $-1$ relative to the product framing.
This $H$ along with $T \times D^2$ form a fishtail neighborhood in our diagram.
Using this, we have to show that $\tau^k$ on $S$ can be realized by a diffeomorphism of $X_0(J_0)$.

\begin{figure}
\centering
\subfloat[]{{
    \fontsize{10pt}{12pt}\selectfont
    \def\svgwidth{2in}
    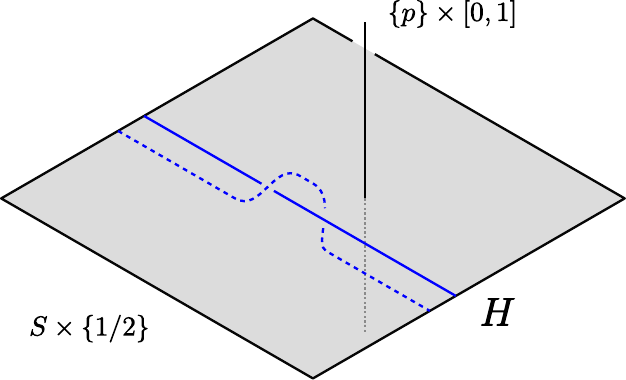
}
\label{fig:fishtail_symm_1}
} \ \
\subfloat[]{{
    \fontsize{10pt}{12pt}\selectfont
    \def\svgwidth{2in}
    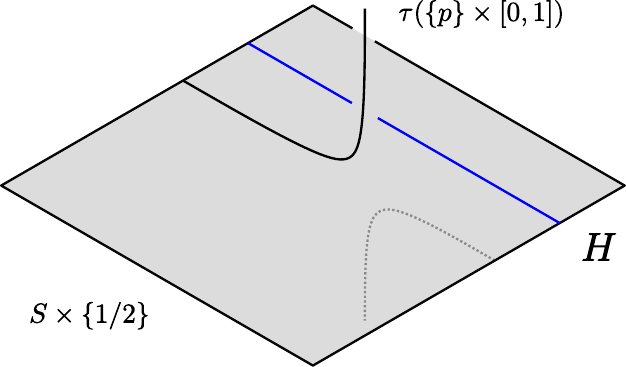
}
\label{fig:fishtail_symm_2}
}
\caption{Using a fishtail to realize a torus twist}
\label{fig:fishtail_symm}
\end{figure}

Parametrize a neighborhood $S \times [0,1]$ of $S$ in $\partial (T \times D^2)$ and take the $2$-handle $H$ to lay at $\alpha \times \{1/2\}$.
This is depicted in Figure \ref{fig:fishtail_symm_1}, the gray rhombus with opposite edges identified is the torus $S \times \{1/2\}$.
The $2$-handle $H$ is shown in blue with a dashed blue framing curve.
For each $p \in S$, the torus twist $\tau$ wraps each $\{p\} \times [0,1]$ once around $\alpha$.
Now push $H$ around $S \times \{1/2\}$ until it comes back to where it started.
When it meets each $\{p\} \times [0,1]$, slide $\{p\} \times [0,1]$ over $H$ by banding it to the dashed blue framing curve to get Figure \ref{fig:fishtail_symm_2}.
This resolves the intersection with $H$, pushing $H$ past, and wrapping $\{p\} \times [0,1]$ around $\alpha$ sending it to $\tau(\{p\} \times [0,1])$ as desired.

\begin{figure}
\centering
\subfloat[]{{
    \fontsize{10pt}{12pt}\selectfont
    \def\svgwidth{2in}
    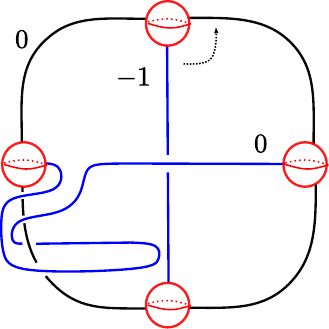
}
\label{fig:fishtail_trace_1}
} \ \
\subfloat[]{{
    \fontsize{10pt}{12pt}\selectfont
    \def\svgwidth{2in}
    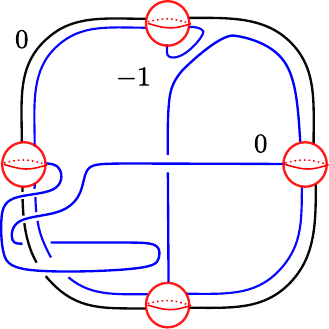
}
\label{fig:fishtail_trace_2}
}

\subfloat[]{{
    \fontsize{10pt}{12pt}\selectfont
    \def\svgwidth{2in}
    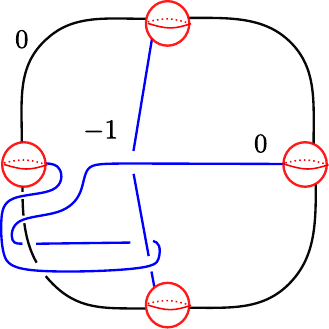
}
\label{fig:fishtail_trace_3}
} \ \
\subfloat[]{{
    \fontsize{10pt}{12pt}\selectfont
    \def\svgwidth{2in}
    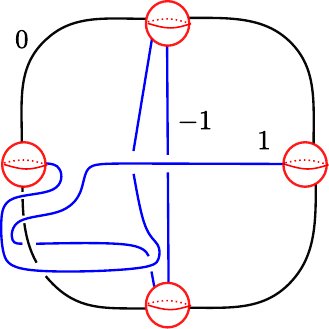
}}
\label{fig:fishtail_trace_4}
\caption{Seeing fishtail symmetry diagrammatically}
\label{fig:fishtail_trace}
\end{figure}

We can also do this diagrammatically, in Figure \ref{fig:fishtail_trace_1} we have drawn Figure \ref{fig:ann_log_3} with $n = -1$ to depict this.
We first push the $-1$ framed $2$-handle $H$ around $S$ until we see another $2$-handle.
Do this by making the indicated slide to Figure \ref{fig:fishtail_trace_2} which is isotopic to Figure \ref{fig:fishtail_trace_3}.
Now slide the other $2$-handle over $H$, this resolves the intersection with $H$, and wraps the other $2$-handle around the vertical $1$-handle.
The $2$-handle $H$ has returned to its initial position while the other $2$-handle has been twisted along the torus $S$.
\end{proof}
This gives conditions for when an annulus twist homeomorphism can extend to a trace diffeomorphism.
This was already known by work of Abe-Jong-Omae-Takeuchi \cite{ann_diff} using the Akbulut trick \cite{twodim} which is referred to as Property $U$ by Manolescu and Piccirillo \cite{zero_surg_exotic}.
Here though we recognize this diffeomorphism as a fishtail symmetry.
As an immediate corollary, we see that this trace diffeomorphism identifies the tori associated to the torus surgery realizing the annulus twist.
\begin{cor}
Let $\Phi_k: X_0(J_0) \cong X_0(J_k)$ be a trace diffeomorphism as in Proposition \ref{prop:fishtail}.
Then $\Phi_k$ identifies the tori $T_0 \subset X_0(J_0)$ and $T_k \subset X_0(J_k)$ determined by the annulus presentation as in Theorem \ref{thm:ann_twi_log_trans}.
\qed
\end{cor}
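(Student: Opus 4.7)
The plan is to trace through the construction of the trace diffeomorphism $\Phi_k$ from Proposition \ref{prop:fishtail} and verify that each ingredient preserves the distinguished torus.

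First I would decompose $\Phi_k$ into two pieces. By Theorem \ref{thm:ann_twi_log_trans}, there is a diffeomorphism $\Phi_k': X_0(J_0)_{T_0, \tau^k} \cong X_0(J_k)$ which by its very construction identifies the central torus of the reglued $T \times D^2$ in $X_0(J_0)_{T_0, \tau^k}$ with the torus $T_k \subset X_0(J_k)$ determined by the annulus presentation of $J_k$. Under the hypothesis on $A$, the proof of Proposition \ref{prop:fishtail} produces a fishtail symmetry $\Psi: X_0(J_0)_{T_0, \tau^k} \cong X_0(J_0)$, and $\Phi_k$ is the composition $\Phi_k' \circ \Psi^{-1}$.

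Next I would verify that $\Psi$ carries the central torus $T'$ in the reglued $T \times D^2 \subset X_0(J_0)_{T_0,\tau^k}$ to $T_0 \subset X_0(J_0)$. This is where one must look at the construction of $\Psi$ carefully: as shown in Figures \ref{fig:fishtail_symm} and \ref{fig:fishtail_trace}, the fishtail symmetry is supported in the fishtail neighborhood $F \subset X_0(J_0)$, and diagrammatically it proceeds by pushing the $-1$-framed $2$-handle $H$ once around the parallel torus $S$, resolving its intersection with the cancelling $2$-handle via a handle slide. This manipulation never disturbs the torus $T_0$ itself (it is the central torus of $T \times D^2$, disjoint from $H$ and $S$), so it identifies $T_0$ with $T'$.

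Putting the two identifications together, $\Phi_k = \Phi_k' \circ \Psi^{-1}$ sends $T_0$ to $T'$ and then to $T_k$, as claimed. The main obstacle is simply the bookkeeping at the interface between the fishtail symmetry and the torus surgery description: one must be careful that the central torus $T'$ of the reglued $T \times D^2$ used by Theorem \ref{thm:ann_twi_log_trans} to recover $T_k$ is the \emph{same} torus that the fishtail symmetry carries to $T_0$. Once one fixes a single parametrization $T \times D^2$ of the normal bundle of $T_0$ in $X_0(J_0)$ and uses it throughout both constructions, this identification becomes tautological.
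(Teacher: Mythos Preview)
Your proposal is correct and is precisely the reasoning the paper has in mind: the paper gives no proof beyond the \qed, remarking afterward that ``constructing the trace diffeomorphism as a fishtail symmetry makes it obvious,'' and your decomposition $\Phi_k = \Phi_k' \circ \Psi^{-1}$ together with the observation that the fishtail symmetry is supported away from the core torus is exactly what makes it obvious. You have simply spelled out what the paper leaves implicit.
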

This is difficult to prove when constructing the trace diffeomorphism using the Akbulut trick as in \cite{ann_diff}.
It is not clear if the trace diffeomorphism coming from Akbulut's trick are the same as the fishtail symmetry and if the above corollary holds for those trace diffeomorphisms.
However, constructing the trace diffeomorphism as a fishtail symmetry makes it obvious.
This is relevant to a question of Casals, Etnyre, and Kegel asking if certain 
equivalence of Stein traces they constructed identify Lagrangian tori \cite{contact_ann_twist}.
The fishtail symmetry approach should be able to help answer the question of Casals-Etnyre-Kegal.
However, they use Akbulut's trick to construct their equivalences of Stein traces and so as noted previously, it is difficult to tell if this approach is applicable.

\begin{figure}
\centering
\subfloat[]{{
    \fontsize{10pt}{12pt}\selectfont
    \def\svgwidth{2in}
    %% Creator: Inkscape 1.4 (e7c3feb100, 2024-10-09), www.inkscape.org
%% PDF/EPS/PS + LaTeX output extension by Johan Engelen, 2010
%% Accompanies image file 'conway_1.pdf' (pdf, eps, ps)
%%
%% To include the image in your LaTeX document, write
%%   \input{<filename>.pdf_tex}
%%  instead of
%%   \includegraphics{<filename>.pdf}
%% To scale the image, write
%%   \def\svgwidth{<desired width>}
%%   \input{<filename>.pdf_tex}
%%  instead of
%%   \includegraphics[width=<desired width>]{<filename>.pdf}
%%
%% Images with a different path to the parent latex file can
%% be accessed with the `import' package (which may need to be
%% installed) using
%%   \usepackage{import}
%% in the preamble, and then including the image with
%%   \import{<path to file>}{<filename>.pdf_tex}
%% Alternatively, one can specify
%%   \graphicspath{{<path to file>/}}
%% 
%% For more information, please see info/svg-inkscape on CTAN:
%%   http://tug.ctan.org/tex-archive/info/svg-inkscape
%%
\begingroup%
  \makeatletter%
  \providecommand\color[2][]{%
    \errmessage{(Inkscape) Color is used for the text in Inkscape, but the package 'color.sty' is not loaded}%
    \renewcommand\color[2][]{}%
  }%
  \providecommand\transparent[1]{%
    \errmessage{(Inkscape) Transparency is used (non-zero) for the text in Inkscape, but the package 'transparent.sty' is not loaded}%
    \renewcommand\transparent[1]{}%
  }%
  \providecommand\rotatebox[2]{#2}%
  \newcommand*\fsize{\dimexpr\f@size pt\relax}%
  \newcommand*\lineheight[1]{\fontsize{\fsize}{#1\fsize}\selectfont}%
  \ifx\svgwidth\undefined%
    \setlength{\unitlength}{216bp}%
    \ifx\svgscale\undefined%
      \relax%
    \else%
      \setlength{\unitlength}{\unitlength * \real{\svgscale}}%
    \fi%
  \else%
    \setlength{\unitlength}{\svgwidth}%
  \fi%
  \global\let\svgwidth\undefined%
  \global\let\svgscale\undefined%
  \makeatother%
  \begin{picture}(1,1)%
    \lineheight{1}%
    \setlength\tabcolsep{0pt}%
    \put(0,0){\includegraphics[width=\unitlength,page=1]{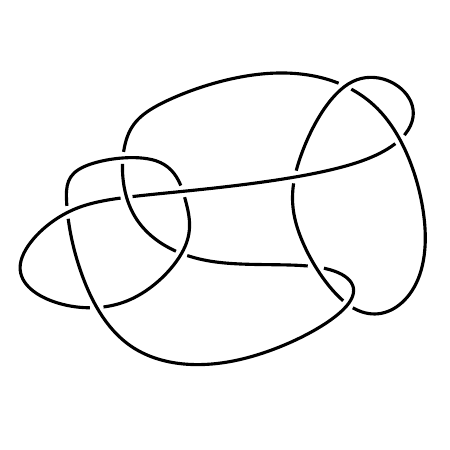}}%
    \put(0.21421929,0.72478061){\color[rgb]{0,0,0}\makebox(0,0)[lt]{\lineheight{1.25}\smash{\begin{tabular}[t]{l}$0$\end{tabular}}}}%
    \put(0,0){\includegraphics[width=\unitlength,page=2]{conway_1.pdf}}%
  \end{picture}%
\endgroup%

}
\label{fig:conway_1}
} \ \
\subfloat[]{{
    \fontsize{10pt}{12pt}\selectfont
    \def\svgwidth{2in}
    %% Creator: Inkscape 1.3.2 (091e20ef0f, 2023-11-25), www.inkscape.org
%% PDF/EPS/PS + LaTeX output extension by Johan Engelen, 2010
%% Accompanies image file 'conway_2.pdf' (pdf, eps, ps)
%%
%% To include the image in your LaTeX document, write
%%   \input{<filename>.pdf_tex}
%%  instead of
%%   \includegraphics{<filename>.pdf}
%% To scale the image, write
%%   \def\svgwidth{<desired width>}
%%   \input{<filename>.pdf_tex}
%%  instead of
%%   \includegraphics[width=<desired width>]{<filename>.pdf}
%%
%% Images with a different path to the parent latex file can
%% be accessed with the `import' package (which may need to be
%% installed) using
%%   \usepackage{import}
%% in the preamble, and then including the image with
%%   \import{<path to file>}{<filename>.pdf_tex}
%% Alternatively, one can specify
%%   \graphicspath{{<path to file>/}}
%% 
%% For more information, please see info/svg-inkscape on CTAN:
%%   http://tug.ctan.org/tex-archive/info/svg-inkscape
%%
\begingroup%
  \makeatletter%
  \providecommand\color[2][]{%
    \errmessage{(Inkscape) Color is used for the text in Inkscape, but the package 'color.sty' is not loaded}%
    \renewcommand\color[2][]{}%
  }%
  \providecommand\transparent[1]{%
    \errmessage{(Inkscape) Transparency is used (non-zero) for the text in Inkscape, but the package 'transparent.sty' is not loaded}%
    \renewcommand\transparent[1]{}%
  }%
  \providecommand\rotatebox[2]{#2}%
  \newcommand*\fsize{\dimexpr\f@size pt\relax}%
  \newcommand*\lineheight[1]{\fontsize{\fsize}{#1\fsize}\selectfont}%
  \ifx\svgwidth\undefined%
    \setlength{\unitlength}{216bp}%
    \ifx\svgscale\undefined%
      \relax%
    \else%
      \setlength{\unitlength}{\unitlength * \real{\svgscale}}%
    \fi%
  \else%
    \setlength{\unitlength}{\svgwidth}%
  \fi%
  \global\let\svgwidth\undefined%
  \global\let\svgscale\undefined%
  \makeatother%
  \begin{picture}(1,1)%
    \lineheight{1}%
    \setlength\tabcolsep{0pt}%
    \put(0,0){\includegraphics[width=\unitlength,page=1]{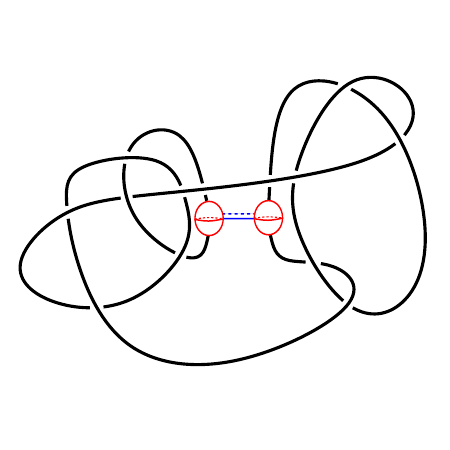}}%
    \put(0.21421929,0.72478061){\color[rgb]{0,0,0}\makebox(0,0)[lt]{\lineheight{1.25}\smash{\begin{tabular}[t]{l}$0$\end{tabular}}}}%
  \end{picture}%
\endgroup%

}
\label{fig:conway_2}
}

\subfloat[]{{
    \fontsize{10pt}{12pt}\selectfont
    \def\svgwidth{2in}
    %% Creator: Inkscape 1.4 (e7c3feb100, 2024-10-09), www.inkscape.org
%% PDF/EPS/PS + LaTeX output extension by Johan Engelen, 2010
%% Accompanies image file 'conway_3.pdf' (pdf, eps, ps)
%%
%% To include the image in your LaTeX document, write
%%   \input{<filename>.pdf_tex}
%%  instead of
%%   \includegraphics{<filename>.pdf}
%% To scale the image, write
%%   \def\svgwidth{<desired width>}
%%   \input{<filename>.pdf_tex}
%%  instead of
%%   \includegraphics[width=<desired width>]{<filename>.pdf}
%%
%% Images with a different path to the parent latex file can
%% be accessed with the `import' package (which may need to be
%% installed) using
%%   \usepackage{import}
%% in the preamble, and then including the image with
%%   \import{<path to file>}{<filename>.pdf_tex}
%% Alternatively, one can specify
%%   \graphicspath{{<path to file>/}}
%% 
%% For more information, please see info/svg-inkscape on CTAN:
%%   http://tug.ctan.org/tex-archive/info/svg-inkscape
%%
\begingroup%
  \makeatletter%
  \providecommand\color[2][]{%
    \errmessage{(Inkscape) Color is used for the text in Inkscape, but the package 'color.sty' is not loaded}%
    \renewcommand\color[2][]{}%
  }%
  \providecommand\transparent[1]{%
    \errmessage{(Inkscape) Transparency is used (non-zero) for the text in Inkscape, but the package 'transparent.sty' is not loaded}%
    \renewcommand\transparent[1]{}%
  }%
  \providecommand\rotatebox[2]{#2}%
  \newcommand*\fsize{\dimexpr\f@size pt\relax}%
  \newcommand*\lineheight[1]{\fontsize{\fsize}{#1\fsize}\selectfont}%
  \ifx\svgwidth\undefined%
    \setlength{\unitlength}{216bp}%
    \ifx\svgscale\undefined%
      \relax%
    \else%
      \setlength{\unitlength}{\unitlength * \real{\svgscale}}%
    \fi%
  \else%
    \setlength{\unitlength}{\svgwidth}%
  \fi%
  \global\let\svgwidth\undefined%
  \global\let\svgscale\undefined%
  \makeatother%
  \begin{picture}(1,1)%
    \lineheight{1}%
    \setlength\tabcolsep{0pt}%
    \put(0,0){\includegraphics[width=\unitlength,page=1]{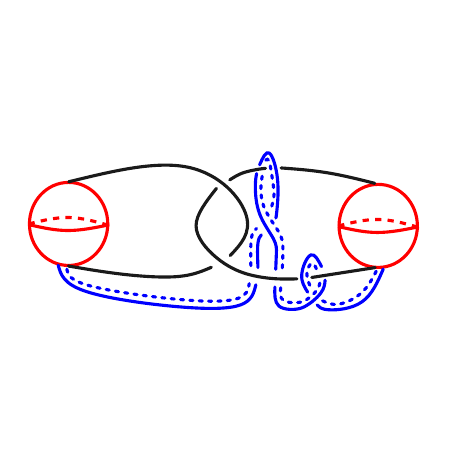}}%
    \put(0.2285046,0.64412169){\color[rgb]{0,0,0}\makebox(0,0)[lt]{\lineheight{1.25}\smash{\begin{tabular}[t]{l}$0$\end{tabular}}}}%
  \end{picture}%
\endgroup%

}
\label{fig:conway_3}
} \ \
\subfloat[]{{
    \fontsize{10pt}{12pt}\selectfont
    \def\svgwidth{2in}
    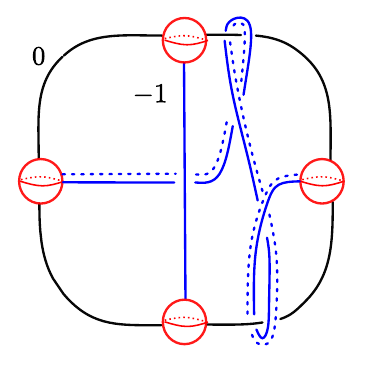
}
\label{fig:conway_4}
}

\caption{Fishtail symmetry in the Conway trace}
\label{fig:conway_fishtail}
\end{figure}
We now observe that there is a well hidden fishtail symmetry in Piccirillo's proof that the Conway knot is not slice \cite{Conway_knot}.
The key step is to construct a knot that shares a zero trace with the Conway knot.
This is crucial as it allows one to escape the difficulty of the Conway knot's sliceness and work with a different knot.
Piccirillo employs a clever Kirby calculus trick to construct this knot that shares a trace with the Conway knot.
This trick starts with a diagram for the Conway knot $C$ with the unknotting crossing highlighted as in Figure \ref{fig:conway_1}.
To view this as a fishtail symmetry, first add a pair of cancelling $1$ and $2$-handles that undoes the unknotting crossing like in Figure \ref{fig:conway_2}.
Here we use double strand notation to keep track of the framing of the $2$-handle.
Think of this as doing a sort of band move that undoes the unknotting crossing where each ball of the added $1$-handle is the result of doing the band move.
Now if we ignore the blue $2$-handle in Figure \ref{fig:conway_2}, it is easy to see that the diagram simplifies because we have undone the unknotting crossing in the black $2$-handle.
Doing this simplification to the black $2$-handle and the red $1$-handle, we get Figure \ref{fig:conway_3}.
Finally, we remove the central twist in the black $2$-handle by adding another pair of cancelling $1$ and $2$-handles.
This gives Figure \ref{fig:conway_4} with the fishtail neighborhood now clearly visible.

Now proceeding as in Proposition \ref{prop:fishtail} with a single torus twist, we get a knot trace diffeomorphic to the Conway knot trace.
This is Piccirillo's companion knot trace that she used to show that the Conway knot is not slice.
We now see that Piccirillo's proof that the Conway knot is not slice can instead be seen as a fishtail symmetry.
%This elucidates Piccirillo's work, instead of a slick Kirby Calculus trick, her trace diffeomorphism is now grounded in a basic surgery construction.
However, it is still somewhat mysterious the presence of the fishtail symmetry; popping up here as well as in the work of Gompf and Akbulut on the Cappell-Shaneson spheres \cite{akb_kir_standard,CS_standard,More_Cappell}.
Nevertheless, once we recognize an interesting torus to do surgery with, we should not just leave it be.
Instead, we look for other interesting surgeries to do with this torus.
From this we find new potential counterexamples to SPC4.
\begin{thm}
There is an infinite collection of $4$-manifolds $C_k$ such that if any embed in the $4$-sphere, then an exotic $4$-sphere exists.
\label{thm:SPC4_conway}
\end{thm}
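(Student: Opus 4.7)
The plan is to exploit the torus $T \subset X_0(C)$ that emerges from the fishtail interpretation of Piccirillo's proof (Figure \ref{fig:conway_4}). Rather than performing only the fishtail torus twist, which merely reproduces Piccirillo's companion knot via fishtail symmetry, I would consider a family $\{\phi_k\}_{k \in \Z}$ of simply connected torus surgeries on $T$ in directions transverse to the fishtail-invariant one, and set $C_k = X_0(C)_{T,\phi_k}$. The first step is to choose $\phi_k$ so that each $C_k$ is simply connected with $\partial C_k = S^3_0(C)$; this should follow from the Kirby picture of Figure \ref{fig:conway_4} once one checks that the fishtail $2$-handle and the Conway $2$-handle together cap off curves whose classes span $\pi_1(T) = \Z^2$ after the regluing.

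Assuming some $C_k$ embeds in $S^4$, I would decompose $S^4 = C_k \cup_{S^3_0(C)} W$ and replace $C_k$ with the Conway trace via the reverse surgery to form
\[ X \;=\; W \cup_{S^3_0(C)} X_0(C). \]
Next I would show $X$ is a homotopy $4$-sphere. For $\pi_1(X)=0$: Van Kampen applied to $S^4 = C_k \cup W$ together with $\pi_1(C_k) = 0$ shows that $\pi_1(W)$ is normally generated by the image of $\pi_1(S^3_0(C))$; since $\pi_1(X_0(C)) = 0$, Van Kampen applied to $X = W \cup X_0(C)$ kills exactly that subgroup. For $H_*(X) = H_*(S^4)$, a Mayer--Vietoris comparison of the two decompositions should suffice, using that $C_k$ and $X_0(C)$ both have $b_2 = 1$ with generator restricting to the capped Seifert surface class in $H_2(S^3_0(C))$, so that the relevant connecting maps line up identically in the two sequences. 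Freedman then upgrades this to a homeomorphism $X \approx S^4$.

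Finally, $X_0(C)$ sits smoothly inside $X$, so its $2$-handle cocore is a smoothly embedded disk in $X^\circ$ with boundary the Conway knot $C$. If $X$ were diffeomorphic to $S^4$ this would make $C$ smoothly slice, contradicting Piccirillo's theorem \cite{Conway_knot}; hence $X$ is an exotic $4$-sphere.

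The hard part will be the first step: pinning down explicit surgery parameters $\phi_k$ that simultaneously preserve simple-connectedness and produce a genuinely infinite family, rather than manifolds secretly diffeomorphic to $X_0(C)$ via fishtail symmetry. This requires choosing surgery curves transverse to the fishtail-invariant direction while tracking their effect on the Kirby picture of Figure \ref{fig:conway_4}; additional work (e.g.\ via Seiberg--Witten invariants, computed through the trace embedding $-X_0(C) \subset C_k$ coming from the reverse surgery on $T$) would then be needed to verify infinitude and mutual distinctness of the family.
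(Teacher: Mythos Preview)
Your proposal is correct and follows essentially the same approach as the paper: define $C_k$ as the $k$-fold torus twist on $T \subset X_0(C)$ in the direction perpendicular to the fishtail, assume some $C_k$ embeds in $S^4$, replace it by $X_0(C)$ via the reverse surgery, and then use Van Kampen plus a homology computation and Freedman to identify the result as a homotopy $S^4$ in which the Conway knot is slice, contradicting Piccirillo. Your Van Kampen argument for why $\pi_1(S^3_0(C))$ normally generates $\pi_1(W)$ is in fact slightly cleaner than the paper's, which instead turns the handle decomposition of $C_k$ upside down; the two are equivalent. One remark: the concern in your final paragraph about verifying that the $C_k$ are genuinely pairwise distinct is not addressed in the paper's proof either --- the paper simply indexes by $k \in \Z$ without arguing non-diffeomorphism, so you need not treat this as an obstacle to completing the argument as stated.
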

\begin{proof}
%
%We do a $p$-fold logarithmic transform on $X_0(C)$ using Figure \ref{fig:conway_4}.
%First, we change to dotted circle notation to get Figure \todo{add figure} and follow Section $8.3$ of \cite{4_Mfld_Kirby}, in particular Figures $8.25$ and $8.26$ and the surrounding discussion.
%Starting from Figure \ref{fig:conway_4}, do torus surgeries on $X_0(C)$ via a $k$-fold torus twist orthogonal to the fishtail symmetry.
%To get multiplicity zero logarithmic transform, do a zero dot swap on the bottom red dotted circle and black $2$-handle in Figure \todo{add figure}.
%By an isotopy of the diagram, we can swap the red $2$-handle and black dotted $1$-handle to get Figure \todo{add figure}.
%This is easiest done by swinging the dotted red $1$-handle down and rotating the diagram $180^\circ$ on the page.
%There is a torus spanned by the red $2$-handle following the top part of the black dotted circle.
%Do a $p$-fold torus twist wrapping the blue $2$-handle $p$ times around the black dotted circle and increasing the framing by $p$.
Let $C_k$ be the result of torus surgery on $X_0(C)$ using the torus apparent in Figure \ref{fig:conway_4} and a $k$-fold torus twist perpendicular to the fishtail symmetry.
These produce simply connected $4$-manifolds $C_k$ with the same boundary and homology as $X_0(C)$.
Suppose some $C_k$ embeds smoothly in $S^4$, denote its exterior by $E(C_k)$.
Since $C_k$ is formed from $0$, $1$, and $2$-handles, this decomposes $S^4$ as $E(C_k) \cup 2-handles \cup 3-handles \cup 4-handle$.
This implies that the boundary $\partial E(C_k) = S^3_0(C)$ normally generates the fundamental group of $E(C_k)$.
Otherwise, we would not get a simply connected fundamental group.
Now consider the space $X_k = E(C_k) \cup X_0(C)$.
By Seifert-Van Kampen, the fundamental group of $X_k$ is the amalgamated product $\pi_1(E(C_k)) *_{\pi_1(S^3_0(C))} \pi_1(X_0(C))$.
The latter term in this expression is trivial which trivializes the $\pi_1(S^3_0(C))$ we are taking the product over.
As noted earlier, $\pi_1(S^3_0(C))$ normally generates $E(C_k)$ and so this also kills the $\pi_1(E(C_k))$ term as well.
Therefore $\pi_1(X_k)$ is trivial and we can conclude that $X_k$ is homeomorphic to $S^4$ after an easy homology computation.
However, $X_k$ contains a smoothly embedded $X_0(C)$ while the $4$-sphere does not as Piccirillo showed \cite{Conway_knot}.
We conclude that such an $X_k$ is not diffeomorphic to the standard $4$-sphere and is therefore an exotic $4$-sphere.
\end{proof}
Manolescu and Piccirillo found five knots that if any were slice, then one could construct an exotic $S^4$ \cite{zero_surg_exotic}.
The author had previously shown that these knots were not slice and hence ruled them out as counterexamples to SPC4 \cite{trace_emb_zero}.
Theorem \ref{thm:SPC4_conway} should be thought of as an attempt to re-implement their strategy.
Manolescu and Piccirillo wanted to remove the trace of a slice knot from $S^4$ and replace it with the trace of a non-slice knot.
Instead, we propose to remove a $4$-manifold with the same boundary and algebraic topology as the trace of a non-slice knot and then glue in that non-slice trace.
We believe that this modification of the Manolescu-Piccirillo approach is stronger for several reasons.

%First of these, is that the author has not been able to obstruct the hypothesized embedding.
First, the obstruction to being standard would come from the non-sliceness of the Conway knot.
As far as the author is aware, the Conway knot is the smallest crossing knot that is known to not be slice, but it is currently unknown whether it is slice in a homotopy $4$-ball.
Much effort had been spent on the Conway knot's slice properties, but Piccirillo only showed that it is not slice in the standard $4$-ball.
This makes it a natural candidate to be exotically slice and potentially useful to disprove SPC4.
Furthermore, Mark Hughes constructed a neural network that attempted to predict the sliceness of knots and it gave an approximately $ \% 50$ chance that the Conway knot is slice \cite{neural}.
To the neural network, the Conway knot only seems ``somewhat slice'' and this might indicate it being homotopy slice.

The $4D$ perspective also sheds light on the strength of these new potential counterexamples to SPC4.
This would essentially be a convoluted way to find a torus surgery on the $4$-sphere that gives rise to an exotic $4$-sphere.
Essentially all known examples of exotic $4$-manifolds come from torus surgeries \cite{round_handles}.
So if one would like to use knot traces and concordance to disprove S4PC, it would be best that the resulting exotic $4$-sphere is manifestly a torus surgery.
Moreover, the trefoil is unknotting number one and so we can apply the above construction.
There we recover the zero trace of the trefoil as its usual diagram as a cusp neighborhood in an elliptic surface where there are two orthogonal fishtail neighborhoods.
If we wish to do a torus surgery on the trace of the trefoil to get something interesting, the fishtail symmetries tell us what not to do.
The only possibility to change the topology is to do what is usually called a logarithmic transform.
This is a torus surgery on $T \times D^2$ by a homeomorphism $\phi: T^2 \times S^1 \rightarrow T^2 \times S^1$ that wraps $\{pt\}  \times S^1$ around itself non-trivially.
To be precise, this means that $\phi$ on homology sends $[\{pt\}  \times S^1]$ to $p[\{pt\}  \times S^1] + q [\theta \times S^1 \times \theta'] + r [S^1 \times \theta \times \theta']$ with $p \neq \pm 1$.
This $p$ is called the multiplicity of the logarithmic transform or torus surgery.
The torus surgeries in Theorem \ref{thm:SPC4_conway} all had multiplicity $1$.
We would now like to mimic this picture for the trefoil with the Conway knot.
\subsection{A second family}
\begin{figure}
\centering
\subfloat[]{{
    \fontsize{10pt}{12pt}\selectfont
    \def\svgwidth{2in}
    %% Creator: Inkscape 1.4 (e7c3feb100, 2024-10-09), www.inkscape.org
%% PDF/EPS/PS + LaTeX output extension by Johan Engelen, 2010
%% Accompanies image file 'log_conway_1.pdf' (pdf, eps, ps)
%%
%% To include the image in your LaTeX document, write
%%   \input{<filename>.pdf_tex}
%%  instead of
%%   \includegraphics{<filename>.pdf}
%% To scale the image, write
%%   \def\svgwidth{<desired width>}
%%   \input{<filename>.pdf_tex}
%%  instead of
%%   \includegraphics[width=<desired width>]{<filename>.pdf}
%%
%% Images with a different path to the parent latex file can
%% be accessed with the `import' package (which may need to be
%% installed) using
%%   \usepackage{import}
%% in the preamble, and then including the image with
%%   \import{<path to file>}{<filename>.pdf_tex}
%% Alternatively, one can specify
%%   \graphicspath{{<path to file>/}}
%% 
%% For more information, please see info/svg-inkscape on CTAN:
%%   http://tug.ctan.org/tex-archive/info/svg-inkscape
%%
\begingroup%
  \makeatletter%
  \providecommand\color[2][]{%
    \errmessage{(Inkscape) Color is used for the text in Inkscape, but the package 'color.sty' is not loaded}%
    \renewcommand\color[2][]{}%
  }%
  \providecommand\transparent[1]{%
    \errmessage{(Inkscape) Transparency is used (non-zero) for the text in Inkscape, but the package 'transparent.sty' is not loaded}%
    \renewcommand\transparent[1]{}%
  }%
  \providecommand\rotatebox[2]{#2}%
  \newcommand*\fsize{\dimexpr\f@size pt\relax}%
  \newcommand*\lineheight[1]{\fontsize{\fsize}{#1\fsize}\selectfont}%
  \ifx\svgwidth\undefined%
    \setlength{\unitlength}{180bp}%
    \ifx\svgscale\undefined%
      \relax%
    \else%
      \setlength{\unitlength}{\unitlength * \real{\svgscale}}%
    \fi%
  \else%
    \setlength{\unitlength}{\svgwidth}%
  \fi%
  \global\let\svgwidth\undefined%
  \global\let\svgscale\undefined%
  \makeatother%
  \begin{picture}(1,1)%
    \lineheight{1}%
    \setlength\tabcolsep{0pt}%
    \put(0,0){\includegraphics[width=\unitlength,page=1]{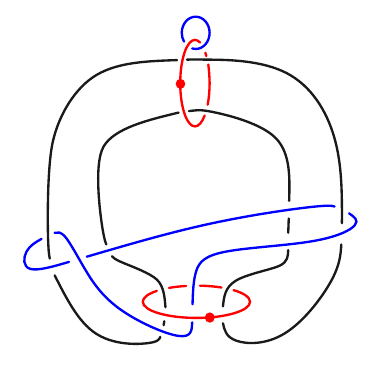}}%
    \put(0.57928245,0.92522482){\color[rgb]{0,0,1}\makebox(0,0)[lt]{\lineheight{1.25}\smash{\begin{tabular}[t]{l}$-1$\end{tabular}}}}%
    \put(0.47207464,0.41981675){\color[rgb]{0,0,1}\makebox(0,0)[lt]{\lineheight{1.25}\smash{\begin{tabular}[t]{l}$0$\end{tabular}}}}%
    \put(0.85676136,0.79279167){\color[rgb]{0.10196078,0.10196078,0.10196078}\makebox(0,0)[lt]{\lineheight{1.25}\smash{\begin{tabular}[t]{l}$0$\end{tabular}}}}%
  \end{picture}%
\endgroup%

}
\label{fig:log_conway_1}
} \ \
\subfloat[]{{
    \fontsize{10pt}{12pt}\selectfont
    \def\svgwidth{2in}
    %% Creator: Inkscape 1.4 (e7c3feb100, 2024-10-09), www.inkscape.org
%% PDF/EPS/PS + LaTeX output extension by Johan Engelen, 2010
%% Accompanies image file 'log_conway_2.pdf' (pdf, eps, ps)
%%
%% To include the image in your LaTeX document, write
%%   \input{<filename>.pdf_tex}
%%  instead of
%%   \includegraphics{<filename>.pdf}
%% To scale the image, write
%%   \def\svgwidth{<desired width>}
%%   \input{<filename>.pdf_tex}
%%  instead of
%%   \includegraphics[width=<desired width>]{<filename>.pdf}
%%
%% Images with a different path to the parent latex file can
%% be accessed with the `import' package (which may need to be
%% installed) using
%%   \usepackage{import}
%% in the preamble, and then including the image with
%%   \import{<path to file>}{<filename>.pdf_tex}
%% Alternatively, one can specify
%%   \graphicspath{{<path to file>/}}
%% 
%% For more information, please see info/svg-inkscape on CTAN:
%%   http://tug.ctan.org/tex-archive/info/svg-inkscape
%%
\begingroup%
  \makeatletter%
  \providecommand\color[2][]{%
    \errmessage{(Inkscape) Color is used for the text in Inkscape, but the package 'color.sty' is not loaded}%
    \renewcommand\color[2][]{}%
  }%
  \providecommand\transparent[1]{%
    \errmessage{(Inkscape) Transparency is used (non-zero) for the text in Inkscape, but the package 'transparent.sty' is not loaded}%
    \renewcommand\transparent[1]{}%
  }%
  \providecommand\rotatebox[2]{#2}%
  \newcommand*\fsize{\dimexpr\f@size pt\relax}%
  \newcommand*\lineheight[1]{\fontsize{\fsize}{#1\fsize}\selectfont}%
  \ifx\svgwidth\undefined%
    \setlength{\unitlength}{180bp}%
    \ifx\svgscale\undefined%
      \relax%
    \else%
      \setlength{\unitlength}{\unitlength * \real{\svgscale}}%
    \fi%
  \else%
    \setlength{\unitlength}{\svgwidth}%
  \fi%
  \global\let\svgwidth\undefined%
  \global\let\svgscale\undefined%
  \makeatother%
  \begin{picture}(1,1)%
    \lineheight{1}%
    \setlength\tabcolsep{0pt}%
    \put(0,0){\includegraphics[width=\unitlength,page=1]{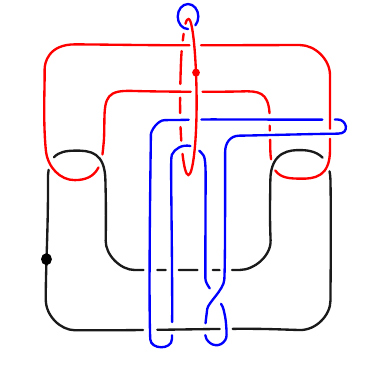}}%
    \put(0.55103662,0.93820745){\color[rgb]{0,0,1}\makebox(0,0)[lt]{\lineheight{1.25}\smash{\begin{tabular}[t]{l}$-1$\end{tabular}}}}%
    \put(0.9045919,0.59165958){\color[rgb]{0,0,1}\makebox(0,0)[lt]{\lineheight{1.25}\smash{\begin{tabular}[t]{l}$0$\end{tabular}}}}%
    \put(0.90013267,0.79551126){\color[rgb]{1,0,0}\makebox(0,0)[lt]{\lineheight{1.25}\smash{\begin{tabular}[t]{l}$0$\end{tabular}}}}%
  \end{picture}%
\endgroup%

}
\label{fig:log_conway_2}
}

\subfloat[]{{
    \fontsize{10pt}{12pt}\selectfont
    \def\svgwidth{2in}
    %% Creator: Inkscape 1.4 (e7c3feb100, 2024-10-09), www.inkscape.org
%% PDF/EPS/PS + LaTeX output extension by Johan Engelen, 2010
%% Accompanies image file 'log_conway_3.pdf' (pdf, eps, ps)
%%
%% To include the image in your LaTeX document, write
%%   \input{<filename>.pdf_tex}
%%  instead of
%%   \includegraphics{<filename>.pdf}
%% To scale the image, write
%%   \def\svgwidth{<desired width>}
%%   \input{<filename>.pdf_tex}
%%  instead of
%%   \includegraphics[width=<desired width>]{<filename>.pdf}
%%
%% Images with a different path to the parent latex file can
%% be accessed with the `import' package (which may need to be
%% installed) using
%%   \usepackage{import}
%% in the preamble, and then including the image with
%%   \import{<path to file>}{<filename>.pdf_tex}
%% Alternatively, one can specify
%%   \graphicspath{{<path to file>/}}
%% 
%% For more information, please see info/svg-inkscape on CTAN:
%%   http://tug.ctan.org/tex-archive/info/svg-inkscape
%%
\begingroup%
  \makeatletter%
  \providecommand\color[2][]{%
    \errmessage{(Inkscape) Color is used for the text in Inkscape, but the package 'color.sty' is not loaded}%
    \renewcommand\color[2][]{}%
  }%
  \providecommand\transparent[1]{%
    \errmessage{(Inkscape) Transparency is used (non-zero) for the text in Inkscape, but the package 'transparent.sty' is not loaded}%
    \renewcommand\transparent[1]{}%
  }%
  \providecommand\rotatebox[2]{#2}%
  \newcommand*\fsize{\dimexpr\f@size pt\relax}%
  \newcommand*\lineheight[1]{\fontsize{\fsize}{#1\fsize}\selectfont}%
  \ifx\svgwidth\undefined%
    \setlength{\unitlength}{180bp}%
    \ifx\svgscale\undefined%
      \relax%
    \else%
      \setlength{\unitlength}{\unitlength * \real{\svgscale}}%
    \fi%
  \else%
    \setlength{\unitlength}{\svgwidth}%
  \fi%
  \global\let\svgwidth\undefined%
  \global\let\svgscale\undefined%
  \makeatother%
  \begin{picture}(1,1)%
    \lineheight{1}%
    \setlength\tabcolsep{0pt}%
    \put(0,0){\includegraphics[width=\unitlength,page=1]{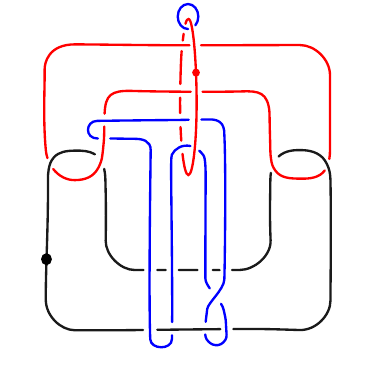}}%
    \put(0.55103662,0.93820745){\color[rgb]{0,0,1}\makebox(0,0)[lt]{\lineheight{1.25}\smash{\begin{tabular}[t]{l}$-1$\end{tabular}}}}%
    \put(0.30959975,0.46807447){\color[rgb]{0,0,1}\makebox(0,0)[lt]{\lineheight{1.25}\smash{\begin{tabular}[t]{l}$0$\end{tabular}}}}%
    \put(0.90013267,0.79551126){\color[rgb]{1,0,0}\makebox(0,0)[lt]{\lineheight{1.25}\smash{\begin{tabular}[t]{l}$0$\end{tabular}}}}%
  \end{picture}%
\endgroup%

}
\label{fig:log_conway_3}
} \ \
\subfloat[]{{
    \fontsize{10pt}{12pt}\selectfont
    \def\svgwidth{2in}
    %% Creator: Inkscape 1.4 (e7c3feb100, 2024-10-09), www.inkscape.org
%% PDF/EPS/PS + LaTeX output extension by Johan Engelen, 2010
%% Accompanies image file 'log_conway_4.pdf' (pdf, eps, ps)
%%
%% To include the image in your LaTeX document, write
%%   \input{<filename>.pdf_tex}
%%  instead of
%%   \includegraphics{<filename>.pdf}
%% To scale the image, write
%%   \def\svgwidth{<desired width>}
%%   \input{<filename>.pdf_tex}
%%  instead of
%%   \includegraphics[width=<desired width>]{<filename>.pdf}
%%
%% Images with a different path to the parent latex file can
%% be accessed with the `import' package (which may need to be
%% installed) using
%%   \usepackage{import}
%% in the preamble, and then including the image with
%%   \import{<path to file>}{<filename>.pdf_tex}
%% Alternatively, one can specify
%%   \graphicspath{{<path to file>/}}
%% 
%% For more information, please see info/svg-inkscape on CTAN:
%%   http://tug.ctan.org/tex-archive/info/svg-inkscape
%%
\begingroup%
  \makeatletter%
  \providecommand\color[2][]{%
    \errmessage{(Inkscape) Color is used for the text in Inkscape, but the package 'color.sty' is not loaded}%
    \renewcommand\color[2][]{}%
  }%
  \providecommand\transparent[1]{%
    \errmessage{(Inkscape) Transparency is used (non-zero) for the text in Inkscape, but the package 'transparent.sty' is not loaded}%
    \renewcommand\transparent[1]{}%
  }%
  \providecommand\rotatebox[2]{#2}%
  \newcommand*\fsize{\dimexpr\f@size pt\relax}%
  \newcommand*\lineheight[1]{\fontsize{\fsize}{#1\fsize}\selectfont}%
  \ifx\svgwidth\undefined%
    \setlength{\unitlength}{180bp}%
    \ifx\svgscale\undefined%
      \relax%
    \else%
      \setlength{\unitlength}{\unitlength * \real{\svgscale}}%
    \fi%
  \else%
    \setlength{\unitlength}{\svgwidth}%
  \fi%
  \global\let\svgwidth\undefined%
  \global\let\svgscale\undefined%
  \makeatother%
  \begin{picture}(1,1)%
    \lineheight{1}%
    \setlength\tabcolsep{0pt}%
    \put(0,0){\includegraphics[width=\unitlength,page=1]{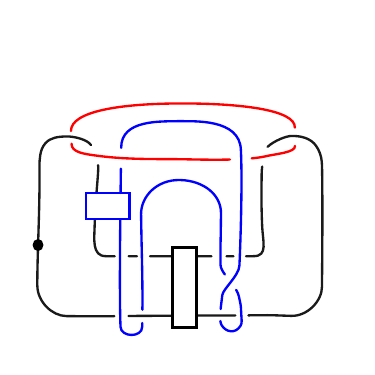}}%
    \put(0.65497865,0.75578087){\color[rgb]{1,0,0}\makebox(0,0)[lt]{\lineheight{1.25}\smash{\begin{tabular}[t]{l}$0$\end{tabular}}}}%
    \put(0.47026359,0.29175892){\color[rgb]{0,0,0}\rotatebox{-90}{\makebox(0,0)[lt]{\lineheight{1.25}\smash{\begin{tabular}[t]{l}$-1$\end{tabular}}}}}%
    \put(0.25850879,0.43569484){\color[rgb]{0,0,1}\makebox(0,0)[lt]{\lineheight{1.25}\smash{\begin{tabular}[t]{l}$p$\end{tabular}}}}%
    \put(0.4389182,0.4521485){\color[rgb]{0,0,1}\makebox(0,0)[lt]{\lineheight{1.25}\smash{\begin{tabular}[t]{l}$p$\end{tabular}}}}%
  \end{picture}%
\endgroup%

}
\label{fig:log_conway_4}
}

\caption{Logarithmic transform on the Conway trace}
\label{fig:log_conway}
\end{figure}
In essence, we would like for an analogy to hold.
We hope that the Conway knot trace can be used for the $4$-sphere in a similar way that the trefoil trace is for elliptic surfaces.
We make the analogy more explicit by considering the $p$-fold logarithmic transform of the Conway knot trace.
To do so, we follow Gompf-Stipsicz Section $8.3$; in particular, Figures $8.25$ and $8.26$ and the discussion surrounding them \cite{4_Mfld_Kirby}.
First, change to dotted circle notation to get Figure \ref{fig:log_conway_1}.
Then, do a zero dot swap on the black $2$-handle and lower red $1$-handle to do a multiplicity zero logarithmic transform.
We can do an isotopy of the diagram to bring the black dotted circle down and the red zero framed $2$-handle up.
This is best done by first swinging the top red dotted circle down and then rotating the whole diagram $180^\circ$ on the page to get Figure \ref{fig:log_conway_2}.
Now pull the red $2$-handle down, bringing the linking with the blue $2$-handle in, and then move the linking with blue over to the left to get Figure \ref{fig:log_conway_3}.
To change from our current picture of a multiplicity zero log transform to multiplicity $p$, we perform a $p$-fold torus twist.
There is a torus in Figure \ref{fig:log_conway_3} formed by taking the sphere spanned by the red zero framed $2$-handle and tubing over the top half arc of the black dotted circle.
The torus twist wraps the blue $2$-handle around the black dotted circle and increases its framing by $p$.
After cancelling the red dotted circle against its blue meridian and bringing the resulting twists down to the bottom, we get Figure \ref{fig:log_conway_4}.
Let the resulting $4$-manifold be denoted by $Y_p$.

The $4$-manifolds $Y_p$ depicted by Figure \ref{fig:log_conway_4} are not potential counterexamples to SPC4 in the manner of Theorem \ref{thm:SPC4_conway}.
Unfortunately, the non-trivial multiplicity of the logarithmic transform provides homological obstructions to embedding $Y_p$ in $S^4$ when $p \neq \pm 1$.
Despite not fitting neatly into the form of Theorem \ref{thm:SPC4_conway}, these $Y_p$ provide some advantages that may still be useful even when $p \neq \pm 1$.
First, the diagrams are much simpler and also admit an enticing almost symmetry.
This is in addition to the points discussed above and the desired analogy with the trefoil trace in the elliptic surface.
%If these potential counterexamples were to work, it would be a beautiful blend of a diverse strain of thoughts in low dimensional topology.
%However, it's not clear what it would mean for these potential counterexamples to work.
The $4$-manifolds $Y_p$ could still be useful in the following ways:
\begin{itemize}
    \item If one were to cap off $Y_p$ with $n-1$ $2$-handles, $n$ $3$-handles and a $4$-handle and then transfer those handle attachments to the Conway knot trace.
    This would then give an exotic $4$-sphere as the resulting $4$-manifold would have no $1$-handles and is simply connected with the right homology.
    \item If we do not concern ourselves with maintaining simply connectedness, then these $Y_p$ may be useful for showing that the Conway knot is rationally slice.
    If some $Y_p$ embedded in $S^4$, then replacing it with the Conway knot trace amounts to a torus surgery on the $4$-sphere.
    As long as the multiplicity of this torus surgery is non-vanishing, then the resulting $4$-manifold is a rational homology $4$-sphere.
    This is roughly analogous to non-zero framings on a knot surger to a rational homology $3$-sphere, see Section $2.3$ of Larson \cite{larson_tori}.
    Then the resulting rational homology $4$-sphere would contain the Conway knot trace.
    Then by the trace embedding lemma, the Conway knot would be slice in a rational homology $4$-ball. 
\end{itemize}
%
%Ultimately, it will take another clever idea to use these $Y_p$ successfully.
%However, considering the difficulty of these tasks, a clever idea is not too much to ask for.

\section{Motivation and Further Questions}
\label{sec:questions}
\subsection{Slice Akbulut-Kirby Conjecture}
The original Akbulut-Kirby Conjecture is that if two knots have the same zero surgery, then they are concordant, i.e. cobound a smooth annulus in $S^3 \times I$ (Problem $1.19$ of Kirby's problem list \cite{prob_list}).
Yasui disproved the Akbulut-Kirby Conjecture by using zero traces of knots \cite{yasui_akb_kirby_conj}.
However, this left the slice case of the Akbulut-Kirby Conjecture open as none of Yasui's counterexamples were slice knots.
\begin{conj}[Slice Akbulut-Kirby Conjecture]
\label{conj:slice_akb_kirb}
If a knot $K$ is slice and there is a zero surgery homeomorphism $\zsg$, then $K'$ is also slice.
\end{conj}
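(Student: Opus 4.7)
The plan is to reduce Conjecture \ref{conj:slice_akb_kirb} to SPC4 via slice disk surgery, following an argument already indicated in Kirby's problem list, and then to attack the resulting homotopy $4$-ball using the torus surgery technology developed in this paper. Given a slice disk $D\subset B^4$ with $\partial D = K$ and a zero surgery homeomorphism $\zsg$, I would form the closed $4$-manifold
\[
B_{D,\phi} \;=\; E(D) \cup_\phi \bigl(-X_0(K')\bigr),
\]
where $E(D) = B^4\setminus \nu(D)$ has boundary canonically identified with $S^3_0(K)$ and the piece $-X_0(K')$ is glued along its $S^3_0(K')$ boundary via $\phi$. A van Kampen and Mayer--Vietoris computation, as in Lemma 3.3 of \cite{zero_surg_exotic}, shows $B_{D,\phi}$ is a homotopy $4$-ball. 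The cocore of the $2$-handle inside $-X_0(K')$ sits in $B_{D,\phi}$ as a smoothly, properly embedded disk bounded by $K'$.

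Assuming SPC4, the homotopy ball $B_{D,\phi}$ is diffeomorphic to the standard $B^4$, and the cocore disk becomes an honest slice disk for $K'$. So the conjecture is implied by SPC4, and the real task is to prove it unconditionally.

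The main obstacle is of course SPC4 itself; barring that, a plausible unconditional strategy uses Theorem \ref{thm:ann_twi_log_trans}. If $\phi$ can be realized $4$-dimensionally as a torus surgery on a nullhomologous torus inside $X_0(K)$, as happens for every $\phi$ arising from annulus twisting or its known variants, then $B_{D,\phi}$ is obtained from $B^4$ by precisely that torus surgery. Whenever the torus admits a fishtail symmetry in the sense of Section 5, the surgery is trivial and one recovers a genuine slice disk for $K'$ with no appeal to SPC4; this is exactly the mechanism standardizing the Manolescu--Piccirillo spheres $Z_k$. I expect the hardest case to be zero surgery homeomorphisms whose associated torus surgery is simply connected but lacks any standardizing symmetry, e.g.\ surgeries of the type used to build the manifolds $C_k$ of Theorem \ref{thm:ex4sphere} on the Conway trace. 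There, standardizing $B_{D,\phi}$ seems to require genuinely new input, and any $\phi$ in that regime is simultaneously a candidate counterexample to Conjecture \ref{conj:slice_akb_kirb} and to SPC4 itself.
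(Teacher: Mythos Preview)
This statement is a \emph{conjecture} in the paper, not a theorem; the paper offers no proof of it and explicitly treats it as open. Section~\ref{sec:questions} merely records that the conjecture would follow from SPC4 (as was already noted in Kirby's problem list) and discusses it as motivation, so there is no ``paper's own proof'' to compare against.

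Your proposal is not a proof either, and you essentially acknowledge this. The first part, reducing the conjecture to SPC4 via the homotopy ball $B_{D,\phi}$, is correct and is exactly the classical observation the paper alludes to; it establishes only the conditional implication $\text{SPC4}\Rightarrow\text{Conjecture~\ref{conj:slice_akb_kirb}}$. The second part, where you propose to bypass SPC4 using Theorem~\ref{thm:ann_twi_log_trans} and fishtail symmetries, has a genuine gap: Theorem~\ref{thm:ann_twi_log_trans} applies only to zero surgery homeomorphisms arising from annulus twisting, not to an arbitrary $\phi$, and even within that class the fishtail standardization of Proposition~\ref{prop:fishtail} requires the annulus to be a $-1$ framed unknot. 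For a general $\zsg$ there is no reason to expect a torus-surgery description at all, let alone one with a standardizing symmetry. You identify this yourself in the final paragraph, which means what you have written is a strategy sketch covering some special cases together with an honest admission that the general case is equivalent to SPC4, not a proof of the conjecture.
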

Conjecture \ref{conj:slice_akb_kirb} has been considered for a long time, but never seems to have been given a name.
Let us call it the slice case of the Akbulut-Kirby Conjecture.
It was noted in the Kirby problem list that the Akbulut-Kirby Conjecture holds when one knot is slice if SPC4 holds.
This is because $K'$ would be slice in the associated slice trace surgery which would then be standard assuming SPC4.
%An immediate corollary of this conjecture is that under its hypothesis that $K$ slice and $\zsg{}$, then $s(K') = 0$.
Manolescu and Piccirillo hoped to disprove SPC4 by finding a counterexample to the Slice Akbulut-Kirby Conjecture \cite{zero_surg_exotic}.
In other words, they hoped to construct an exotic $4$-sphere as a strongly exotic slice trace surgery using the terminology from Section \ref{sec:MP_spheres} that is detected by the $s$-invariant.
%The author showed that this corollary holds given some assumptions on the zero surgery homeomorphism $\phi$.\todo{improve flow}
To get partial progress towards this method of disproving SPC4, the original goal of this paper was to construct a strongly exotic $H$-slice trace surgery on an elliptic surface.
This was unsuccessful as we only constructed weakly exotic $H$-slice trace surgeries.
These are exotic $H$-slice trace surgeries $X_{(D,\phi)}$ with knots $(K,K')$ where both $K$ and $K'$ are $H$-slice in $X$.
This means $H$-sliceness of $K'$ does not smoothly distinguish $X$ from $X_{(D,\phi)}$.

\begin{figure}
\centering
    \fontsize{10pt}{12pt}\selectfont
    \def\svgwidth{4in}
    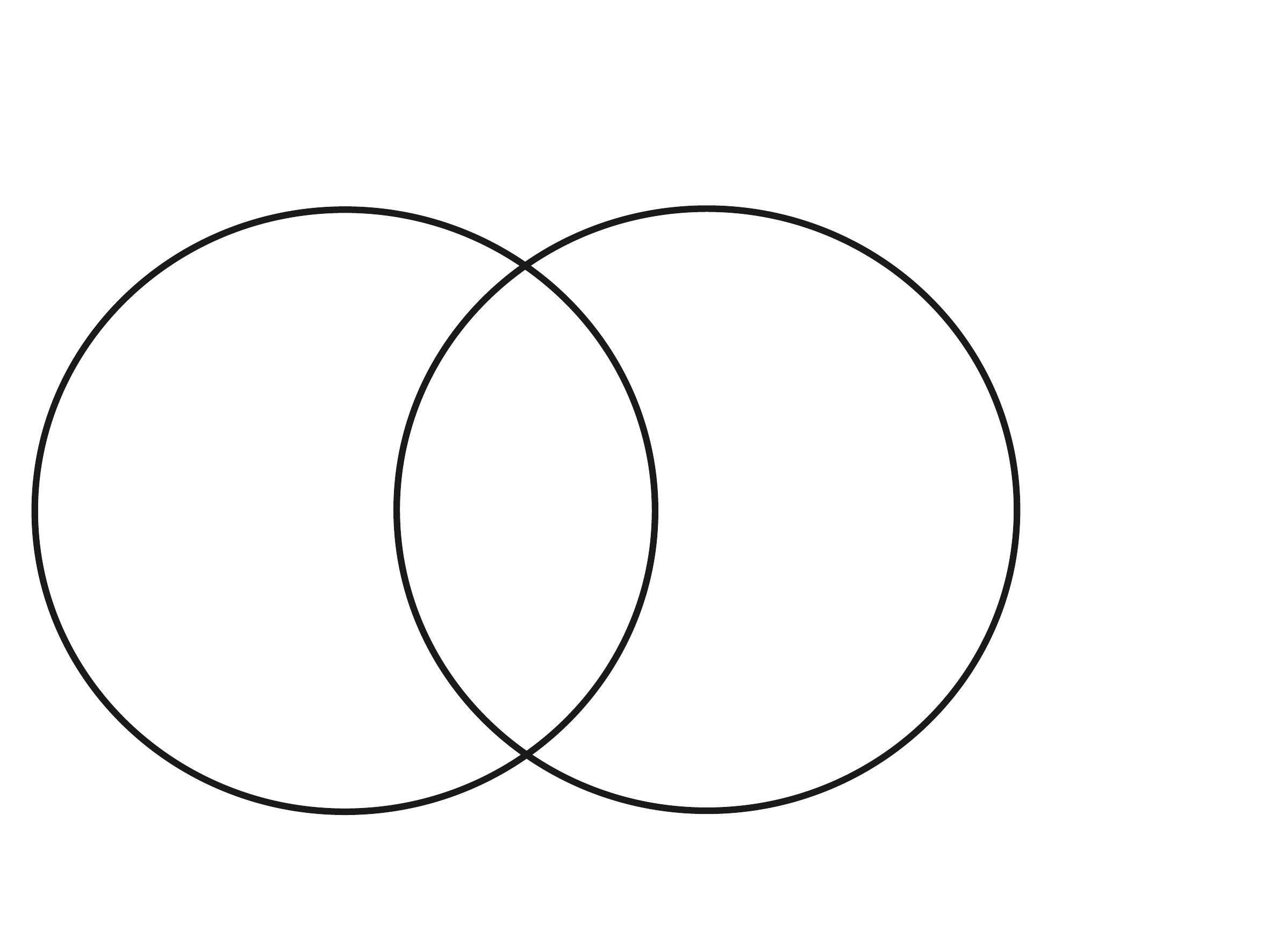
\caption{Venn diagram illustrating the relation between exotically H-slice knots and exotic H-slice trace surgeries}
\label{fig:venn_diag}
\end{figure}
Recall that a knot $K$ is exotically $H$-slice in a $4$-manifold $X$ with respect to another $4$-manifold $X'$, if $X$ and $X'$ are homeomorphic, but $K$ is $H$-slice in $X'$ and not in $X'$.
This smoothly distinguishes $X$ and $X'$ and implies that they are an exotic pair of $4$-manifolds.
Manolescu-Marengon-Piccirillo showed that the right hand trefoil is exotically $H$-slice in $K3 \# \CPbar$ with respect to $3 \CP \# 20 \CPbar$ \cite{rel_gen_indef_manifold}.
Despite having examples of exotically $H$-slice knots and exotic $H$-slice trace surgeries, an example with both remains elusive.
This situation is illustrated as a Venn diagram in Figure \ref{fig:venn_diag} where the intersection of both phenomena would be a strongly exotic $H$-slice trace surgery.
It is very interesting that we can individually get both sides of this Venn diagram, but not both at the same time.
This would be a counterexample to the $H$-slice version of Conjecture \ref{conj:slice_akb_kirb}.
Such a counterexample could help clarify how to find a counterexample to Conjecture \ref{conj:slice_akb_kirb} and successfully implement the Manolescu-Piccirillo approach to SPC4.

Constructing a strongly exotic $H$-slice trace surgery seems to be a difficult problem.
Qin considered the analogous surgery with framed sliceness as a possible method to construct an exotic $\nCPbar{}$ \cite{qin2023rbg}.
It is not difficult to construct a strongly exotic framed slice trace surgery with non-zero framing using an exotic pair of traces.
%Take a Stein trace with an embedding into a minimal symplectic $4$-manifold.
%Then surger it using a homeomorphic trace with genus function that violates the adjunction inequality.
%Yasui gives many examples of such pairs of knot traces \cite{yasui_akb_kirby_conj}.
%This construction does not work to exhibit a strongly exotic $H$-slice trace surgery as the adjunction inequality does not obstruct $H$-sliceness.
To make progress towards constructing a strongly exotic $H$-slice trace surgery, we need stronger obstructions to $H$-sliceness.
Manolescu-Marengon-Piccirillo gave the first example of an obstruction to $H$-sliceness that can be used to exhibit an exotically $H$-slice knot \cite{rel_gen_indef_manifold}.
Despite the author's best efforts, he was unable to use this obstruction to construct a strongly exotic $H$-slice trace surgery.
It may be possible these attempts were in vain and this obstruction cannot be used to exhibit strongly exotic $H$-slice trace surgeries.
\begin{conj}
If there is a zero surgery homeomorphism $\zsg{}$ and $K$ is $H$-slice in a symplectic $4$-manifold $X$ with $b^+(X) = 3 \ mod \ 4$, then $-K'$ does not bound a disk $\Delta$ in any symplectic $4$-manifold $X'$ with $b^+(X') = 3 \ mod \ 4$, $[\Delta]^2 \ge 0$, and $[\Delta] \neq 0$.
\end{conj}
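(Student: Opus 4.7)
The plan is to extend the Bauer--Furuta $H$-sliceness obstructions of \cite{rel_gen_indef_manifold,bauer_furuta_Hslice} across the zero surgery homeomorphism $\phi$ by a gluing argument. First I would reduce to the case $[\Delta]^2 = 0$: blowing up $X'$ at $n = [\Delta]^2$ interior points of $\Delta$ produces a proper transform $\widetilde\Delta$ with self-intersection zero but still $[\widetilde\Delta]\neq 0$, sitting in the symplectic manifold $X' \# n\,\CPbar$, which retains $b^+ \equiv 3 \pmod 4$.

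Next, assume for contradiction that $-K'$ bounds such a $\widetilde\Delta$. Form the closed $4$-manifold $W$ by gluing the disk exteriors $E(D)$ and $E(\widetilde\Delta)$ along their common zero surgery boundary $S^3_0(K)\cong S^3_0(K') = S^3_0(-K')$ via $\phi$, and capping off the two remaining $S^3$ boundary components with $4$-balls. A Mayer--Vietoris computation should then exhibit an essential class in $H_2(W)$ of self-intersection zero, dual to the one supported by $\widetilde\Delta$ in $X' \# n\,\CPbar$.

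The heart of the argument would be a relative Bauer--Furuta principle: the symplectic structure on $X$, the $H$-sliceness of $K$, and $b^+(X) \equiv 3 \pmod 4$ should force a non-trivial contribution from $(X^\circ, D)$ to the Bauer--Furuta invariant of any closed manifold obtained by filling $S^3_0(K)$, and likewise for $(X'^\circ, \widetilde\Delta)$ using the symplectic structure on $X' \# n\,\CPbar$ and $b^+ \equiv 3 \pmod 4$. Gluing these contributions via $\phi$ would yield a non-vanishing statement for the Bauer--Furuta invariant of $W$, while the essential embedded class of self-intersection zero in $W$ should force the same invariant to vanish by a blow-up or wall-crossing argument, giving a contradiction.

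The hard part will be making this gluing precise. No existing $H$-slice obstruction is currently formulated in a way that cleanly separates contributions on either side of $S^3_0(K)$, so the principal technical hurdle is developing a relative Bauer--Furuta framework compatible with cut-and-paste along zero surgeries, in the spirit of Kronheimer--Mrowka's gluing for Seiberg--Witten or Fr{\o}yshov's $h$-invariant. Absent such a framework, a more direct route might attempt to endow $W$ itself with a suitable symplectic or almost-complex structure inherited from $X$ and $X'$; either approach will require substantial new input beyond the methods currently available.
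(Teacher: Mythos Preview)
The statement you are attempting to prove is presented in the paper as an open \emph{conjecture}, not a theorem; the paper offers no proof and explicitly frames it as an assertion whose truth or falsity would be informative (a counterexample would exhibit a strongly exotic $H$-slice trace surgery). There is therefore no ``paper's own proof'' to compare against.

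Your proposal is not a proof but an outline of a strategy, and you yourself identify the decisive gap: the gluing step requires a relative Bauer--Furuta package for manifolds cut along $S^3_0(K)$ that does not currently exist. Without that, the argument has no content beyond the heuristic that ``symplectic input on each side should survive the gluing,'' which is precisely the unproven point. Smaller issues compound this: the blow-up reduction to $[\Delta]^2=0$ is fine, but the claim that the resulting class in $W$ forces vanishing of a Bauer--Furuta invariant ``by a blow-up or wall-crossing argument'' is not justified (an essential square-zero class need not kill the invariant), and there is no reason to expect $W$ to carry a symplectic or even a useful almost-complex structure assembled from the two pieces. As written, this is a research programme rather than a proof, which is consistent with the paper's treatment of the statement as a conjecture.
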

The above conjecture asserts that the Manolescu-Marengon-Piccirillo obstruction cannot detect strongly exotic $H$-slice trace surgeries.
A counterexample would of course exhibit a strongly exotic $H$-slice trace surgery.
There are other directions towards constructing a strongly exotic $H$-slice trace surgery.
Stronger obstructions to $H$-sliceness and other constructions of exotic $H$-slice trace surgeries would be tremendously insightful.
In Section \ref{sec:constr}, we showed that the $-5_2$ knot is the smallest non-trivial knot $H$-slice in the $K3$ surface.
It would be interesting if this differentiates the $K3$ surface from other homotopy $K3$ surfaces.
\begin{prob}
Exhibit a homotopy $K3$ surface that the $-5_2$ knot is not $H$-slice in.
\end{prob}
%
%This may be difficult as it appears that none of the current obstructions to $H$-sliceness are suitable to obstruct the $H$-sliceness.
%An ea

%
\subsection{Knot traces}
There are several interesting avenues to follow up these results regarding knot traces.
The key technical result underlying this paper is the relationship between annulus twisting a knot and a torus surgery on the knot's trace.
This allowed us to do torus surgeries on a knot trace in a nice way.
In Section \ref{sec:stein} on Stein traces, we first observed how we can use Luttinger surgeries to realize our exotic traces as exotic Stein fillings.
We then used the contact annulus twisting construction to upgrade these from exotic Stein fillings to exotic Stein traces of Legendrian knots.
%The original proof strategy for this result was to realize a contact annulus twist as a Luttinger surgery on a Stein trace.
%This was unsuccessful
Theorem \ref{thm:ann_twi_log_trans} suggests it should be possible to reconcile these two constructions of exotic Stein fillings.
\begin{conj}
The analogue of Theorem \ref{thm:ann_twi_log_trans} holds for contact annulus twists and Luttinger surgeries of Stein traces.
\end{conj}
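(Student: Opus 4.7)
The plan is to mimic the proof of Theorem \ref{thm:ann_twi_log_trans} symplectically, upgrading each ingredient so that the Stein structure on the trace of a Legendrian $J_0$ is preserved throughout. Fix a contact annulus presentation $(A,\gamma)$ of a Legendrian $J_0$ and let $X_0(J_0)$ be the Weinstein trace obtained by attaching a $2$-handle along $J_0$ with framing $\mathrm{tb}(J_0)-1$. The first step is to realize the torus $T\subset X_0(J_0)$ from Theorem \ref{thm:ann_twi_log_trans} as a Lagrangian torus. Recall $T=A\cup A'$, where $A'$ is built from the Legendrian band $\gamma$ and the surgery disk, which under a Weinstein attachment along a Legendrian is Lagrangian. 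Since $A$ is pre-Lagrangian and $\gamma$ is Legendrian, pushing $A$ inward along a Reeb-type collar and perturbing $A'$ rel boundary yield Lagrangian annuli whose smooth union along a common Legendrian circle assembles into the desired Lagrangian torus $T$.

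Second, I would identify the torus twist $\tau^k$ of Theorem \ref{thm:ann_twi_log_trans} with a $k$-fold Luttinger twist on $T$. That twist is along $S=T\times\{\theta\}$ in the direction inherited from $A$; the pre-Lagrangian structure on $A$ promotes this direction to a closed Legendrian curve on a parallel boundary torus of the Weinstein neighborhood $T\times D^2$, and a Dehn twist along such a curve is by definition a Luttinger twist. By the defining property of Luttinger surgery, the result $X_0(J_0)_{T,\tau^k}$ carries a symplectic structure agreeing with $X_0(J_0)$ outside a neighborhood of $T$ and has unchanged contact boundary. Combined with Theorem \ref{thm:ann_twi_log_trans} and a relative Moser argument near $T$, this produces a symplectomorphism $\Phi_k: X_0(J_0)_{T,\tau^k}\cong X_0(J_k)$ whose restriction to the boundary is a contactomorphism. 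Finally, compare this boundary contactomorphism with the Casals-Etnyre-Kegel contact annulus twist contactomorphism; since both are supported in a neighborhood of a pre-Lagrangian torus determined by $(A,\gamma)$ and act by the same Legendrian Dehn twist, they should agree up to contact isotopy.

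The main obstacle will be this last identification. Theorem \ref{thm:ann_twi_log_trans}'s boundary map is described via a smooth Dehn twist on $\partial(T\times D^2)$, while the Casals-Etnyre-Kegel contactomorphism is defined directly on the contact boundary via a contact Dehn twist along a Legendrian ruling of a pre-Lagrangian torus. Showing the two coincide requires standardizing both descriptions in a common contact normal form for the pre-Lagrangian torus and its Legendrian rulings, and then checking that the preferred Legendrian ruling on each side is the same. A secondary difficulty is that Luttinger surgery is a priori only symplectic, not Stein, so a Stein (Weinstein) structure on $X_0(J_0)_{T,\tau^k}$ must be exhibited separately, most likely by reading a Weinstein handle decomposition directly off the modified Kirby picture used to prove Theorem \ref{thm:ann_twi_log_trans}.
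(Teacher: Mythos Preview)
The statement you are attempting to prove is a \emph{conjecture} in the paper, not a theorem; it appears in Section~\ref{sec:questions} (``Motivation and Further Questions'') and the paper offers no proof. The author explicitly writes that ``Proving this would further validate the philosophy of this paper,'' so there is nothing to compare your proposal against.

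That said, your outline is a reasonable sketch of how one would attack the conjecture, and you have correctly identified the two genuine obstacles the paper itself hints at. The identification of the boundary contactomorphism with the Casals--Etnyre--Kegel map is precisely the difficulty the author flags when discussing the related question of whether fishtail-symmetry trace diffeomorphisms identify Lagrangian tori (see the paragraph following the corollary to Proposition~\ref{prop:fishtail}): the paper notes it is ``difficult to tell'' whether the Akbulut-trick diffeomorphisms used by Casals--Etnyre--Kegel coincide with the fishtail/torus-twist diffeomorphisms. Your second obstacle, that Luttinger surgery is symplectic but not obviously Stein, is also real; the paper sidesteps it in Section~\ref{sec:stein} by directly checking a Stein handle decomposition on $X_0(J_0^m)$ and then invoking Luttinger surgery only to preserve the contact boundary, never claiming the Luttinger-surgered manifold is itself Stein via that route. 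So your proposal is a sensible plan, but it is a plan for an open problem, not a reconstruction of something the paper proves.
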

Proving this would further validate the philosophy of this paper: apply $4$-manifold techniques to the study of knot traces.
It would be desirable to apply other surgery constructions of $4$-manifolds to knot traces.
In particular, it would be interesting to find a good way to do a Fintushel-Stern knot surgery on knot traces.
\begin{prob}
Find a knot trace $X_n(K)$ and a Fintushel-Stern knot surgery on $X_n(K)$ so that the result is a different knot trace $X_n(K')$.
\end{prob}
One limitation of using torus surgeries to study knot traces is the need for a self intersection zero torus.
This makes it harder to apply the techniques of this paper to knot traces with non-zero framing as these have no non-trivial homology classes for such a torus.
This leaves open whether the results of this paper hold for traces with non-zero framings.
\begin{prob}
Construct exotic traces with non-zero framings that have the same properties as those constructed in this paper.
\end{prob}
As far as the author is aware, all prior examples of exotic traces are constructed via a cork twist.
These use the Akbulut cork and its boundary involution to get exotic pairs of traces.
The evolution from pairs to infinitely many exotic traces is analogous to the evolution of involutive corks to infinite order corks.
In fact, our construction of exotic traces was directly inspired by Gompf's construction of infinite order corks \cite{inf_ord_cor}.
We leave with our final question whether this inspiration can be made explicit.
\begin{prob}
Exhibit an infinite order cork $(C,f)$ and an embedding $C \subset X_0(J^m_0)$ so that the cork twist by $f^k$ results in $X_0(J^m_k)$.
\end{prob}

\printbibliography
\end{document}